\documentclass[12pt]{amsart}
\usepackage{graphicx}
\usepackage[centertags]{amsmath}
\usepackage{amsfonts}
\usepackage{amssymb}
\usepackage{amsthm}

\vfuzz2pt 
\hfuzz2pt 

\newtheorem{thm}{Theorem}

\newtheorem{lem}{Lemma}[section]
\newtheorem{cor}{Corollary}

\newtheorem{prop}[lem]{Proposition}
\theoremstyle{definition}

\theoremstyle{remark}
\newtheorem{rem}{Remark}[section]
\numberwithin{equation}{section}


\newcommand{\norm}[1]{\left\Vert#1\right\Vert}

\newcommand{\set}[1]{\left\{#1\right\}}

\newcommand{\pd}[2]{\frac{\partial #1}{\partial #2}}

\newcommand{\vphi}{{\varphi}}

\newcommand{\G}{\Gamma}

\newcommand{\calC}{\mathcal{C}}

\newcommand{\calF}{\mathcal{F}}

\newcommand{\calL}{\mathcal{L}}

\newcommand{\calO}{\mathcal{O}}

\newcommand{\calS}{\mathcal{S}}

\newcommand{\bbZ}{\mathbb{Z}}

\newcommand{\bbQ}{\mathbb{Q}}
\newcommand{\bbR}{\mathbb R}
\newcommand{\bbC}{\mathbb C}

\newcommand{\bbN}{\mathbb N}

\newcommand{\bbH}{\mathbb{H}}

\newcommand{\Tr}{\operatorname{tr}}

\newcommand{\SO}{\mathrm{SO}}

\newcommand{\PSL}{\mathrm{PSL}}

\newcommand{\vol}{\mathrm{vol}}

\newcommand{\lap}{\triangle}
\newcommand{\bs}{\backslash}
\newcommand{\id}{1\!\!1}

\newcommand{\sgn}{\mathrm{sgn}}
\renewcommand{\Im}{\mathrm{Im}}


\begin{document}
\title[Hybrid trace formula]
{Hybrid trace formula for a non-uniform irreducible lattice in $\PSL_2(\bbR)^n$}%
\author{Dubi Kelmer}%
\address{Department of Mathematics, Boston College,  }
\email{kelmer@bc.edu}

\thanks{This work was partially supported by NSF grant DMS-1237412.}%
\subjclass{}%
\keywords{}%

\date{\today}%
\dedicatory{}%
\commby{}%
\begin{abstract} In a series of lectures Selberg introduced a trace formula on the space of hybrid Maass-modular forms of an irreducible uniform lattice in $\PSL_2(\bbR)^n$. In this paper we derive the analogous formula for a non-uniform lattice and use it to study the distribution of elliptic-hyperbolic conjugacy classes. In particular, for Hilbert modular groups there is a nice interpretation of these results in terms of class numbers and fundamental units of quadratic forms over totally real number fields.
\end{abstract}

\maketitle

\section*{Introduction}
Let $\Gamma$ denote a lattice in $\PSL_2(\bbR)$. For any hyperbolic $\gamma\in \Gamma$ (i.e., with $|\Tr(\gamma)|>2$) let $\rho_\gamma>1$ such that $\gamma\sim \left(\begin{smallmatrix} \sqrt{\rho_\gamma} & 0\\0 & 1/\sqrt{\rho_\gamma}\end{smallmatrix}\right)$. The Selberg Zeta function of $\Gamma$ is then defined by the Euler product
\[Z(s,\Gamma)=\prod_{\{\gamma\}\in \Gamma_{\mathrm{h}}'}\prod_{k=0}^\infty (1-\rho_\gamma^{-s+k}),\]
where the first product is over all primitive (i.e., not a power of another element) hyperbolic conjugacy classes. This product converges for $\Re(s)>1$, and using the Selberg trace formula Selberg \cite{Selberg56} showed that $Z(s,\Gamma)$ has an analytic continuation to $\bbC$ and satisfies a functional equation relating $Z(s,\Gamma)$ and $Z(1-s,\Gamma)$. Furthermore, its zeros in the critical strip $0\leq\Re(s)\leq 1$ are located on the line $\{\tfrac{1}{2}+i\bbR\}\cup(0,1]$ and are related to the spectrum of the hyperbolic Laplacian 
on $L^2(\Gamma\bs \bbH)$. Here $\bbH$ denotes the upper half plane endowed with the hyperbolic metric and $\Gamma$ acts by isometries via linear fractional transformations.

Now, let $\Gamma\subset \PSL_2(\bbR)^n$ denote an irreducible uniform lattice. In his 1995 lecture \cite{Selberg95}, Selberg described how to construct partial Zeta functions
from the primitive elliptic-hyperbolic conjugacy classes in $\Gamma$. Here, an element $\gamma\in \Gamma$ is called elliptic-hyperbolic if $|\Tr(\gamma_j)|<2$ for $j=1,\ldots, n-1$ and $|\Tr(\gamma_n)|>2$. For an elliptic-hyperbolic $\gamma\in \Gamma$, let $\rho_{\gamma}>1$ correspond to $\gamma_n$ as above and for $j=1,\ldots, n-1$ let $\gamma_j\sim \left(\begin{smallmatrix} \epsilon_{\gamma_j} & 0\\0 & \bar{\epsilon}_{\gamma_j}\end{smallmatrix}\right)$ with $\epsilon_{\gamma_j}\in S^1$.
For any weight $m\in \bbN^{n-1}$, the corresponding partial Zeta function is given by
\begin{equation*}
Z_{m}(s,\Gamma)=\prod_{\{\gamma\}\in \Gamma_{\mathrm{eh}}'}\mathop{\prod_{k_n\geq 0}}_{|k_j|<m_j}\left(1-\epsilon_{\gamma_1}^{k_1}\cdots \epsilon_{\gamma_{n-1}}^{k_{n-1}}\rho_\gamma^{-s-k_n}\right)^{(-1)^{n-1}},
\end{equation*}
where the notation $\Gamma_{\mathrm{eh}}'$ indicates that we only consider primitive elliptic-hyperbolic classes.

Just as in the classical setting of the Selberg Zeta function, the analytic properties of these partial Zeta functions are derived from a corresponding trace formula. Specifically, this is a trace formula on a space of hybrid Hilbert Maass-modular forms that we will now describe: Consider a product, $\bbH^n$, of $n$ hyperbolic planes. For any $m\in \bbZ^n$ let $L^2(\Gamma\bs \bbH^n,m)$ denote the space of functions $\psi$ on $\bbH^n$ which are square integrable on $\Gamma\bs \bbH^n$ and satisfy $\psi(\gamma z)=j_\gamma(z)^{m}\psi(z)$ for all $\gamma\in\Gamma$. Here we used the shorthand
$$j_\gamma(z)^{m}=j_{\gamma_1}(z_1)^{m_1}j_{\gamma_2}(z_2)^{m_2}\cdots j_{\gamma_n}(z_n)^{m_n},$$
where $j_{\gamma_j}(z_j)=\frac{cz_j+d}{c\bar{z}_j+d}$ for $\gamma_j=\left(\begin{smallmatrix} a &b\\c &d\end{smallmatrix}\right)$. Let $\lap_{z_j,m_j}$ denote the partial $m_j$-Laplacian acting on the $j$'th coordinate,
\[\lap_{z_j,m_j}=y_j^2(\pd{}{x_j^2}+\pd{}{y_j^2})-2im_jy_j\pd{}{x_j}.\]
For $m\in \bbN^{n-1}$ let $M_{m}(\Gamma)$ denote the subspace of $L^2(\Gamma\bs \bbH^n,m)$ composed of joint eigenfunctions of $\lap_{z_j,m_j},\;j=1,\ldots, n-1$ with corresponding eigenvalues $|m_j|(1-|m_j|)$. Here and below we naturally identify $\bbN^{n-1}\subset \bbZ^{n-1}\subset\bbZ^n$ by making the last coordinate zero. We similarly define the spaces $M_{\sigma m}(\Gamma)$ for all possible signs $\sigma\in \{\pm1\}^{n-1}$.

For  $m\in \bbN^{n-1}$ and $\sigma\in\{\pm 1\}^{n-1}$ let $\{\psi_k^{(\sigma m)}\}_{k\in \bbN}$ be an orthonormal basis for $M_{\sigma m}(\Gamma)$ composed of eigenfunction of $\lap_{z_n}$ with eigenvalues
$$0<\lambda_0(\sigma m)<\lambda_1(\sigma m)\leq \lambda_3(\sigma m) \ldots.$$
\begin{rem}
Recall that for a lattice $\Gamma$ in $\PSL_2(\bbR)$, the Maass wave forms are the Laplace eigenfunctions in $L^2(\Gamma\bs \bbH)$, and that the space of modular forms of weight $2m$ can be identified with the subspace of $L^2(\Gamma\bs \bbH,m)$ composed of eigenfunctions of $\lap_m$ with eigenvalue $m(1-m)$ (via the map $f(z)\mapsto y^mf(z)$). Correspondingly, we can think of the functions $\psi^{(m)}_k$ as hybrid forms, behaving like Hilbert modular forms of weight $m$ in the first $n-1$ coordinates, and like Maass forms in the last coordinate.
\end{rem}

For $\Gamma\subset \PSL_2(\bbR)^n$ an irreducible uniform torsion free lattice, the trace formula for these hybrid forms takes the following form
(see \cite[Theorem 7']{Kelmer10Holonomy}).
\begin{eqnarray*}
\lefteqn{\sum_{\sigma\in\{\pm 1\}^{n-1}}\sum_{k=1}^\infty h(r_{k,\sigma m}) +(-2)^{n-1}\delta_{m,1}h(\tfrac{i}{2})}\\
&&=\frac{|m|^* \vol(\Gamma\bs \bbH^n)}{2(2\pi)^n}\int_{\bbR} h(r)r\tanh(\pi r)dr\\
&+& (-1)^{n-1}\sum_{\{\gamma\}\in \Gamma_{\mathrm{eh}}'}\sum_{l=1}^\infty \log(\rho_\gamma)\frac{\hat{h}(l\log(\rho_\gamma))}{\rho_\gamma^{l/2}-\rho_\gamma^{-{l/2}}}\prod_{j=1}^{n-1}\left(\sum_{|k|<|m_j|}\epsilon_{\gamma_j}^{kl}\right)
\end{eqnarray*}
where $h(r)$ is any even holomorphic function with Fourier transform $\hat{h}$ compactly supported, $\lambda_k(m)=\frac{1}{4}+r_{k,m}^2$, and $|m|^*=\prod_{j=1}^{n-1} (2|m_j|-1)$.
\begin{rem}
The no torsion assumption is only used to simplify the exposition, when $\Gamma$ has torsion there is a similar formula with an additional term corresponding to the elliptic classes. The assumption on the compact support of $\hat{h}$ can also be relaxed. The sum over all possible signs $\sigma\in \{\pm1\}^{n-1}$ is also not necessary, and there is a similar formula without it. We chose to present the formula in this form to emphasize the resemblance to the logarithmic derivative of the partial Zeta function
\[\frac{Z_m'(s,\Gamma)}{Z_m(s,\Gamma)}=(-1)^{n-1}\sum_{\gamma\in \Gamma_{\mathrm{eh}}'}\sum_{l=1}^\infty \log(\rho_\gamma)\frac{\rho_\gamma^{(1/2-s)l}}{\rho_\gamma^{l/2}-\rho_\gamma^{-l/2}}\prod_{j=1}^{n-1}\left(\sum_{|k|<|m_j|}\epsilon_{\gamma_j}^{kl}\right).\]
\end{rem}

From this trace formula we get, by standard arguments, that $Z_m(s,\Gamma)$ has an analytic continuation and functional equation relating $s,1-s$.
The zeros of $Z_m(s,\Gamma)$ are the trivial zeros located on the negative integers, and the spectral zeros located at $s\in \bbC$ such that $s(1-s)=\lambda_k(\sigma m)$ with order equal to $\#\{k,\sigma|s(1-s)=\lambda_k(\sigma m)\}$. In the special case where all $|m_j|=1$ and $n$ is odd (respectively even) there is another zero (respectively pole) at $s=1$ of order $2^{n-1}$.

\begin{rem}
The partial Zeta function is actually the square of an analytic function defined by taking the product over just one out of each pair of inverse classes (note that the contribution of $\{\gamma\}$ and $\{\gamma^{-1}\}$ to the Zeta function is the same). Moreover, under certain assumption on the lattice (specifically, if it is derived from a maximal order in a quaternion algebra), it follows from \cite[Corollary 6.1]{Kelmer10Holonomy} that $Z_m(s,\Gamma)$ is actually a $2^{n-1}$ power of an analytic function, which explains the order of the zero/pole at $s=1$.
\end{rem}

In addition to the application to the partial Zeta functions, the hybrid trace formula was used in \cite{KelmerSarnak09} in order to establish the existence of a strong spectral gap for $\G\bs\PSL_2(\bbR)^n$. In particular, the bounds on the strong spectral gap imply that $\lambda_0(m)\geq c(\Gamma)>0$ for all $m\in \bbZ^{n-1}$; this can be interpreted as a uniform zero free region for all the partial zeta functions $Z_m(s,\Gamma),\; m\in \bbN^{n-1}$. The hybrid trace formula was also used in \cite{Kelmer10Holonomy} to compute the asymptotics of the number of elliptic-hyperbolic classes (with $\rho_\gamma$ bounded), and to study the distribution of the corresponding elliptic partes $\epsilon_{\gamma_j}$ in $S^1$.

Perhaps the most famous examples of irreducible lattices in $\PSL_2(\bbR)^n$ are the Hilbert modular groups,
$\Gamma_F=\PSL_2(\calO_F)$, where $\calO_F$ denotes the ring of integers of a totally real number field of degree $[F:\bbQ]=n$. These lattices are non-uniform (that is, the quotient $\Gamma_F\bs\bbH^n$ is not compact), and hence \cite[Theorem 7']{Kelmer10Holonomy} does not apply directly. Nevertheless, it was shown in  \cite{Kelmer10Holonomy} that when $n>2$ there is a companion uniform lattice $\tilde{\Gamma}_F$, such that the geometric side of the hybrid trace formulas for $\Gamma_F$ and $\tilde{\Gamma}_F$ are identical. Using this observation, the results of \cite{Kelmer10Holonomy} on the distribution of elliptic-hyperbolic classes, as well as the properties of the partial Zeta functions still hold for $\Gamma_F$ as long as $[F:\bbQ]>2$.

If we replace $\Gamma_F$ with a finite index subgroup it is less clear how to find such a companion uniform lattice, moreover, for quadratic fields such a companion uniform lattice does not exist even for the full group. In order to deal with these cases, we need to use the hybrid trace formula for an irreducible non-uniform lattice $\Gamma\subseteq \PSL_2(\bbR)^n$.

In this case, there is an orthogonal decomposition
$$L^2(\Gamma\bs \bbH^n,m)=L^2_c(\Gamma\bs\bbH^n,m)\oplus L^2_d(\Gamma\bs \bbH^n,m),$$
into the continuous spectrum (exhausted by the Eisenstein series) and the discrete spectrum (spanned by the cusp forms and residual forms).
We then define $M_m(\Gamma)\subseteq L^2_d(\Gamma\bs \bbH^n,m)$ as above (using only the discrete part of the spectrum).
In order to derive the hybrid trace formula here, we need to account for the continuous spectrum, the parabolic elements, and the hyperbolic elements having a parabolic fixed point, and their contributions to the trace formula.
We will show that when $n>2$ all these new contributions cancel out perfectly and we recover exactly the same formula as in the uniform case.
(This cancelation is not very surprising in view of the results for Hilbert modular groups). The case of $n=2$ is more interesting as some of the new contributions do not cancel out and we get terms that did not appear in the uniform case. Specifically, in this case we have
\begin{thm}\label{t:htrace2}
Let $\Gamma\subseteq \PSL_2(\bbR)^2$ denote an irreducible torsion free lattice with $\kappa$ inequivalent cusps. For any even holomorphic function $h(r)$ with Fourier transform $\hat{h}$ compactly supported, and any $m\in \bbN$
\begin{eqnarray*}
\lefteqn{\sum_{k=1}^\infty h(r_{k,m}) -\delta_{m,1}h(\tfrac{i}{2})
=\tfrac{(2m-1)\vol(\Gamma\bs \bbH^2)}{(4\pi)^2}\int_{\bbR} h(r)r\tanh(\pi r)dr}\\
&&-\frac{1}{2}\sum_{\gamma\in \Gamma_{\mathrm{eh}}'}\sum_{l=1}^\infty \log(\rho_\gamma)\frac{\hat{h}(l\log(\rho_\gamma))}{\rho_\gamma^{l/2}-\rho_\gamma^{-l/2}}\bigg(\sum_{|k|<m}\epsilon_{\gamma}^{\ell k}\bigg)\\
&&-\sum_{i=1}^\kappa R^{(i)}\bigg(\hat{h}(0)+\sum_{l=1}^\infty 2\hat{h}(2lR^{(i)})\exp(-2lR^{(i)}(m-\tfrac{1}{2}))\bigg)
\end{eqnarray*}
where $R^{(i)}>0$ denotes the regulator of the $i$'th cusp.
\end{thm}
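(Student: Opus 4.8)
The plan is to derive the formula, as in the uniform case, by computing the trace of a hybrid integral operator on $L^2(\Gamma\bs\bbH^2,m)$ two ways and then isolating the contributions coming from the cusps. First I would take the hybrid point-pair kernel of \cite{Kelmer10Holonomy}: a kernel $K(z,w)$ on $\bbH^2\times\bbH^2$ that in the first coordinate reproduces weight-$m$ forms, thereby fixing the eigenvalue $m(1-m)$ of $\lap_{z_1,m}$, and that in the second coordinate is a point-pair invariant whose spherical (Selberg) transform equals $h$. Averaging over $\Gamma$ against the automorphy factor $j_\gamma(w)^m$ yields the automorphic kernel $K_\Gamma(z,w)=\sum_{\gamma\in\Gamma}j_\gamma(w)^{-m}K(z,\gamma w)$, and the quantity to analyze is $\int_{\Gamma\bs\bbH^2}K_\Gamma(z,z)\,d\mu(z)$; since the quotient is noncompact this integral diverges and must be regularized by truncating the fundamental domain at height $Y$ in each cusp.

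For the spectral side I would use $L^2(\Gamma\bs\bbH^2,m)=L^2_c\oplus L^2_d$. The discrete part produces $\sum_k h(r_{k,m})$ over the hybrid forms of $M_m(\Gamma)$, the $\lambda=0$ residual form (present exactly when $m=1$) accounting for the $-\delta_{m,1}h(\tfrac i2)$ term; note that this, together with the volume term and the elliptic-hyperbolic sum below, is precisely the single-sign version of the uniform formula of \cite{Kelmer10Holonomy} (whose volume and eh coefficients halve and whose residual coefficient $(-2)^{n-1}$ becomes $-1$ once the sum over signs is dropped). The continuous part produces an integral of $h$ against the logarithmic derivative of the Eisenstein scattering matrix, whose $Y\to\infty$ divergence I would track explicitly.

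On the geometric side I would unfold $\int_{\mathcal F_Y}K_\Gamma(z,z)\,d\mu(z)$ into $\Gamma$-conjugacy classes. The identity gives the volume term $\tfrac{(2m-1)\vol(\Gamma\bs\bbH^2)}{(4\pi)^2}\int_\bbR h(r)r\tanh(\pi r)\,dr$, and the primitive elliptic-hyperbolic classes reproduce, exactly as in the uniform computation and with no interference from the cusps, the $-\tfrac12$-weighted sum over $\Gamma_{\mathrm{eh}}'$; both converge as $Y\to\infty$. What remains are the parabolic elements and the elements fixing a cusp but acting hyperbolically, and here the irreducibility of $\Gamma$ is decisive: the stabilizer of the $i$th cusp is, up to conjugation, generated by a rank-one lattice of simultaneous parabolic translations together with a single \emph{unit} that is hyperbolic in both coordinates, with reciprocal multipliers $e^{\pm 2R^{(i)}}$. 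This unit, which has no analogue in the $\PSL_2(\bbR)$ theory, makes the cusp sector compact modulo the stabilizer in the scaling direction and is the source of the new term.

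The heart of the argument, and the step I expect to be the main obstacle, is the joint analysis of these cusp classes and the continuous spectrum. Introducing Iwasawa coordinates in each cusp, I would organize the contribution as a sum over $l\in\bbZ$ indexing powers of the fundamental unit. For $l\neq0$ one gets genuine cusp-hyperbolic classes: the second coordinate contributes a hyperbolic translation of length $2lR^{(i)}$, giving $\hat h(2lR^{(i)})$, while the first-coordinate hyperbolic multiplier combined with the weight-$m$ automorphy factor produces the decay $\exp(-2lR^{(i)}(m-\tfrac12))$; pairing $\pm l$ yields the coefficient $2$. The $l=0$ slice is the pure parabolic part, whose divergence as $Y\to\infty$ must cancel exactly against the divergent part of the continuous-spectrum integral computed above, leaving the finite remainder $R^{(i)}\hat h(0)$. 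Summing over cusps gives $-\sum_{i=1}^\kappa R^{(i)}\big(\hat h(0)+\sum_{l\ge1}2\hat h(2lR^{(i)})\exp(-2lR^{(i)}(m-\tfrac12))\big)$. The delicate bookkeeping is to verify that these parabolic and continuous divergences match term-by-term and that the unit contribution survives as a finite remainder — precisely the failure of the cancellation that occurs for $n>2$, where the unit group has rank $\ge 2$ and the extra directions wash it out.
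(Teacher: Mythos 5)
Your overall architecture (truncate, unfold into conjugacy classes, match divergences between the cusp classes and the continuous spectrum) has the right shape, but the step you yourself flag as ``the heart of the argument'' contains a genuine gap, and it is exactly the point where the paper takes a different route. You treat the $l\neq 0$ cusp classes --- the powers of the fundamental unit fixing the $i$'th cusp --- as ordinary hyperbolic classes with convergent orbital integrals equal to $\hat h(2lR^{(i)})\exp(-2lR^{(i)}(m-\tfrac12))$. They are not: such an element is hyperbolic in both factors, so its $G$-centralizer is a two-dimensional torus while its $\Gamma$-centralizer has rank one; hence $\vol(\Gamma_\gamma\bs G_\gamma)=\infty$ and the truncated integral over $\calF_{\gamma,A}$ diverges like $\log A$ for \emph{every} $l\neq 0$, not only for the parabolic slice $l=0$. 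These $l\neq 0$ divergences must also be cancelled against the continuous spectrum, whose divergent part (Proposition \ref{p:cont}) is a sum over all powers $q$ of the unit, not just $q=0$; your plan only matches the $l=0$ divergence. Moreover, even after the $\log A$ terms are removed, the finite part of the hyperbolic--parabolic contribution is not the naive product of matrix coefficients you write down: it involves the weighted integrals $\int f_m(n_x,a_tn_x)\log(1+x_j^2)\,dx$ of \eqref{e:Ihp}, and you give no computation showing that these assemble into $\hat h(2lR^{(i)})e^{-2lR^{(i)}(m-1/2)}$.

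The paper sidesteps all of this by a different mechanism: it forms an alternating sum over neighbouring weights using an auxiliary point-pair invariant in the first coordinate whose transform $\hat h_1$ is supported in $(-\delta,\delta)$ with $\delta<\min_iR^{(i)}$. With that choice the hyperbolic--parabolic classes contribute \emph{nothing} (Lemma \ref{l:Ihp}), and the terms $2\hat h(2lR^{(i)})e^{-2lR^{(i)}(m-1/2)}$ arise instead from the continuous spectrum, by Poisson summation over the characters $q$ of the unit group followed by Lemma \ref{l:integralcomp1}; the $\hat h(0)$ term is what survives after the parabolic and continuous terms proportional to $\int_\bbR h_1(t)\big((m-\tfrac12)^2+t^2\big)^{-1}dt$ cancel. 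Your attribution of the cusp term to the unit classes is consistent in spirit with the paper's remark that without the support restriction those classes would contribute, but to make your version rigorous you would have to carry out the regularized $l\neq 0$ orbital integrals and re-expand the continuous contribution without Poisson summation, none of which is in your sketch. Two smaller points: a reproducing kernel for weight $m$ in the first coordinate is not compactly supported, which complicates the unfolding near the cusps (the paper's auxiliary kernels are compactly supported by design); and the $-\delta_{m,1}h(\tfrac i2)$ term records the failure of the constant function at weight $0$ to lift to weight $m$ (Proposition \ref{p:Lambda}), rather than the presence of a weight-$m$ residual form.
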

\begin{rem}
The assumption of no torsion is only used to simplify the exposition and the assumption on the compact support of $\hat{h}$ can be relaxed; see Theorem \ref{t:htrace} below for the general formula. Note that the spectrum of $\lap_{z_2}$ on $M_m(\Gamma)$ and $M_{-m}(\Gamma)$ is the same so there is no need to sum over these two sign changes.
\end{rem}

The analytic properties of the partial Zeta functions $Z_m(s,\Gamma)$ for a non-uniform irreducible lattice $\Gamma\subseteq \PSL_2(\bbR)^n$ follow from the trace formula as in the uniform case. The only difference is when $n=2$, in which case $Z_m(s,\Gamma)$ has additional double zeros at $1-m+\frac{\pi k}{R^{(i)}}i,\; k\in \bbZ$ corresponding to the additional terms coming from the cusps.

\begin{rem}
While working on this paper we learned that Gon \cite{Gon12} also derived this formula for the Hilbert modular group $\PSL_2(\calO_F)$ with $F$ a quadratic field of class number one. He then used it to study the corresponding partial Zeta functions and counting function of elliptic-hyperbolic elements in this setting.
We note that his definition of the Zeta functions is slightly different from ours.
\end{rem}

We conclude the introduction by presenting a few consequences regarding the counting function of elliptic-hyperbolic classes and the distribution of their elliptic part for a non-uniform lattice. The following is an improvement of \cite[Corollary 6.2]{Kelmer10Holonomy}
\begin{thm}\label{t:equi}
For $\Gamma\subseteq \PSL_2(\bbR)^n$ a non-uniform irreducible lattice and a smooth function, $f$, on $(S^1)^{n-1}$
\[\mathop{\sum_{\{\gamma\}\in \Gamma_{\mathrm{eh}}'}}_{\rho_\gamma\leq T} f(\epsilon_\gamma)=2^{n-1}\mathrm{Li}(T)\mu^{n-1}(f)+O_f(T^{3/4}),\]
where $\mu$ is the measure on $S^1\cong \bbR/2\pi\bbZ$ given by $d\mu(\theta)=\sin^2(\tfrac{\theta}{2})\frac{d\theta}{\pi}$, and $\mathrm{Li}(T)=\int_2^T\frac{dt}{\log(t)}$.
\end{thm}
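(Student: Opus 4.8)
The plan is to reduce the equidistribution statement to a prime-geodesic-type asymptotic for each character on $(S^1)^{n-1}$, and then to feed these into the analytic theory of the partial Zeta functions $Z_m(s,\Gamma)$. Since $f$ is smooth, its Fourier expansion $f(\theta)=\sum_{k\in\bbZ^{n-1}}\hat f(k)e^{i k\cdot\theta}$ has rapidly decaying coefficients, so by a standard truncation it suffices to control, for each frequency $k=(k_1,\dots,k_{n-1})$, the twisted counting sums $S_k(T)=\sum_{\{\gamma\}\in\Gamma_{\mathrm{eh}}',\,\rho_\gamma\le T}\prod_{j=1}^{n-1}\epsilon_{\gamma_j}^{k_j}$, with an error uniform enough in $k$ to reassemble against $\hat f$. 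Matching $\hat f$ against the asserted main term then reduces the theorem to showing $S_k(T)=2^{n-1}\mathrm{Li}(T)\prod_{j=1}^{n-1}\hat\mu(k_j)+O(T^{3/4})$, where $\hat\mu(0)=1$, $\hat\mu(\pm1)=-\tfrac12$, and $\hat\mu(k)=0$ for $|k|\ge2$ are the Fourier coefficients of $d\mu(\theta)=\sin^2(\tfrac\theta2)\tfrac{d\theta}\pi$.

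The link to the Zeta functions comes through the Dirichlet-kernel weights $\prod_j\big(\sum_{|k_j|<m_j}\epsilon_{\gamma_j}^{k_j}\big)$ appearing in $Z_m$. Writing $C_m(T)$ for the counting sum attached to this weight, the identity $\sum_{|k|<m}\epsilon^{k}=\sum_{|k|<m-1}\epsilon^k+(\epsilon^{m-1}+\epsilon^{-(m-1)})$ shows that, coordinate by coordinate, the families $C_m(T)$ and $S_k(T)$ are related by a triangular (telescoping) transformation that is inverted by finitely many differences; using $S_k=S_{-k}$ (from $\{\gamma\}\leftrightarrow\{\gamma^{-1}\}$), each $S_k(T)$ is an explicit finite linear combination of the $C_m(T)$. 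The crucial input, extracted from the stated analytic properties of $Z_m(s,\Gamma)$, is that $Z_m$ has a singularity at $s=1$ (a zero or pole of order $2^{n-1}$) exactly when $m=(1,\dots,1)$, and is regular and non-vanishing there otherwise; consequently $C_m(T)=2^{n-1}\mathrm{Li}(T)+O(T^{3/4})$ when $m=(1,\dots,1)$ and $C_m(T)=O(T^{3/4})$ as soon as some $m_j\ge2$. Carrying this through the inversion reproduces exactly the weights $\prod_j\hat\mu(k_j)$: only frequencies with all $|k_j|\le1$ survive into the main term, with value $1$ at $k_j=0$ and $-\tfrac12$ at $k_j=\pm1$, which is precisely the product measure $\mu^{n-1}$.

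To produce the asymptotics for a single $C_m(T)$ I would run the usual explicit-formula argument on $\dfrac{Z_m'(s,\Gamma)}{Z_m(s,\Gamma)}$, whose Dirichlet expansion over $\{\gamma\},l$ has leading ($l=1$) coefficient $\log(\rho_\gamma)\prod_j\big(\sum_{|k_j|<m_j}\epsilon_{\gamma_j}^{k_j}\big)$. Applying Perron's formula and shifting the contour leftwards from $\Re(s)=1+\eps$, the residue at $s=1$ supplies $2^{n-1}T$ when $m=(1,\dots,1)$ and nothing otherwise (the factor $(-1)^{n-1}$ and the sign of the order of the singularity combine to $+2^{n-1}$ in both parities of $n$). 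Partial summation then removes $\log\rho_\gamma$ and converts $T$ into $\mathrm{Li}(T)$, while the non-primitive ($l\ge2$) terms, governed by $\rho_\gamma^{l}\le T$, contribute only $O(T^{1/2})$ and are harmless.

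The main obstacle is the error term $O(T^{3/4})$, i.e.\ the quality of the shifted contour integral. I would bound $Z_m'/Z_m$ on vertical lines using the functional equation relating $Z_m(s,\Gamma)$ and $Z_m(1-s,\Gamma)$ together with the Weyl-law bound $O(T^2)$ for the number of spectral zeros up to height $T$; truncating the explicit formula at height $X$ and balancing the two resulting contributions, of sizes $\asymp T(\log T)^2/X$ and $\asymp T^{1/2}X$, at $X=T^{1/4}$ yields the exponent $3/4$, exactly as in the classical prime geodesic theorem. For this to give a clean $O(T^{3/4})$ with no additional oscillating main terms one must rule out spectral zeros of $Z_m$ in the strip $\tfrac34<\Re(s)<1$ (equivalently, exceptional eigenvalues below $\tfrac3{16}$), since such eigenvalues would contribute genuine secondary terms of size $T^{s_j}$ with $s_j>\tfrac34$; this is exactly where the strong spectral gap of \cite{KelmerSarnak09} is used, via its uniform lower bound on $\lambda_0(\sigma m)$. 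The remaining technical point is the uniformity of all these estimates in the frequency $k$, so that the truncated Fourier series of $f$ can be reassembled; this is handled by the rapid decay of $\hat f(k)$ against polynomial-in-$k$ bounds for the implied constants.
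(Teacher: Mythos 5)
Your route is genuinely different from the paper's: you go through the partial Zeta functions $Z_m(s,\Gamma)$, Perron's formula and a contour shift, whereas the paper never leaves the trace formula --- it expands $f$ in the orthogonal basis $\{H_{\sigma m}\}$ of $L^2((\bbR/2\pi\bbZ)^{n-1},\mu^{n-1})$, applies Theorem \ref{t:htrace} with a suitable test function to get Proposition \ref{p:equi} (a weighted count with error $\ll e^{x/4}+e^{7x/64}$, the second term coming from the Blomer--Brumley bound $\lambda_0(m)\geq\tfrac14-(\tfrac{7}{64})^2$), and then does partial summation. For $n=2$ your argument is essentially a valid alternative (it is close to what Gon does), though note that the quantitative spectral input you need is Blomer--Brumley rather than the strong spectral gap of \cite{KelmerSarnak09}, which only supplies an unspecified positive constant and is stated for compact quotients.

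For $n\geq 3$, however, there is a genuine gap at the inversion step. The weights carried by $Z_m$ are the coordinate-wise symmetric Dirichlet kernels $\prod_j\bigl(\sum_{|k_j|<m_j}\epsilon_{\gamma_j}^{k_j}\bigr)$, so the mixed differences of the $C_m(T)$ only recover the fully symmetrized sums $\sum_{\sigma\in\{\pm1\}^{n-1}}S_{\sigma k}(T)$, where $\sigma$ flips the sign of each coordinate of $k$ independently. The relation you invoke, $S_k=S_{-k}$ via $\{\gamma\}\leftrightarrow\{\gamma^{-1}\}$, only identifies $k$ with the \emph{simultaneous} negation of all coordinates; it does not give $S_{(k_1,-k_2,\dots)}=S_{(k_1,k_2,\dots)}$. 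Hence the linear system is underdetermined and your argument only controls the part of $f$ invariant under each $\epsilon_j\mapsto\epsilon_j^{-1}$ separately --- e.g.\ it says nothing about $f(\theta)=\sin\theta_1\sin\theta_2$. That restricted statement is exactly \cite[Corollary 6.2]{Kelmer10Holonomy}; the whole point of Theorem \ref{t:equi} (see the remark following it) is to remove the evenness hypothesis. The paper achieves this because the geometric side of Theorem \ref{t:htrace}, \emph{before} summing over the signs $\sigma$, carries the individual weights $H_{\sigma m}(\theta)=\prod_j e^{i\sigma_j m_j\theta_j}/(1-e^{i\sigma_j\theta_j})$, which span all of $L^2(\mu^{n-1})$ and not just its coordinate-wise even part; the Zeta functions $Z_m$ only see the $\sigma$-summed combination, so this information is lost the moment you pass to them.
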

\begin{rem}
For $\Gamma=\PSL_2(\calO_F)$ with $[F:\bbQ]>2$ this was proved in \cite[Corollary 6.2]{Kelmer10Holonomy} under the additional assumption that $f$ is even in two of its coordinates. Our new result removes this assumption and is also valid when $F$ is a quadratic extension and $\Gamma$ an arbitrary irreducible lattice.
\end{rem}

Paying a small price in the exponent of the error term, we can also allow for a sharp cutoff.
\begin{cor}\label{c:equi}
There is $\delta=\delta_k>0$ such that for any $k$ arcs $I_1,\ldots, I_k$ in $S^1$
\[\frac{\#\{\{\gamma\}\in \Gamma_{\mathrm{eh}}'|\rho_\gamma\leq T,\; \epsilon_{\gamma_j}\in I_j\}}{\#\{\{\gamma\}\in \Gamma_{\mathrm{eh}}'|\rho_\gamma\leq T\}}=\mu(I_1)\cdots\mu( I_k)+O(T^{-\delta}),\]
where the implied constant does not depend on the arcs.
\end{cor}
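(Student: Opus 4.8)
The plan is to deduce Corollary \ref{c:equi} from Theorem \ref{t:equi} by a sandwiching (smoothing) argument, so that the only genuinely new ingredient is a quantitative record of how the implied constant in the error term $O_f(T^{3/4})$ of Theorem \ref{t:equi} depends on $f$. Throughout, write $N(T)=\#\{\{\gamma\}\in\Gamma_{\mathrm{eh}}'\mid\rho_\gamma\le T,\ \epsilon_{\gamma_j}\in I_j\}$ for the numerator and $D(T)=\#\{\{\gamma\}\in\Gamma_{\mathrm{eh}}'\mid\rho_\gamma\le T\}$ for the denominator.

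First I would fix a smoothing scale $\eta\in(0,1)$ and, for each arc $I_j\subseteq S^1$, construct smooth functions $\phi_j^{\pm}$ with $\phi_j^{-}\le\mathbf 1_{I_j}\le\phi_j^{+}$, with $\int_{S^1}(\phi_j^{+}-\phi_j^{-})\,d\mu=O(\eta)$, and with $\|\phi_j^{\pm}\|_{C^s}=O(\eta^{-s})$ for each fixed $s$, all implied constants depending only on $s$ and \emph{not} on the position or length of $I_j$. Concretely one enlarges, respectively shrinks, $I_j$ by $\eta$ and convolves with a fixed bump scaled to width $\eta$. Setting $f^{\pm}=\prod_{j=1}^{k}\phi_j^{\pm}$, extended by $1$ in the remaining $n-1-k$ coordinates, yields smooth functions on $(S^1)^{n-1}$ with $f^{-}\le\mathbf 1_{I_1\times\cdots\times I_k}\le f^{+}$; since $\mu$ is a probability measure, inclusion--exclusion gives $\mu^{n-1}(f^{\pm})=\mu(I_1)\cdots\mu(I_k)+O(\eta)$.

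The main point is to apply Theorem \ref{t:equi} to $f^{\pm}$ while tracking the $\eta$-dependence of the error. This forces one to revisit the proof of Theorem \ref{t:equi}: expanding $f$ in the characters attached to $\mu$ and using that each character's contribution is handled by the trace formula with an error growing only polynomially in its frequency, one extracts a bound $|O_f(T^{3/4})|\le C\,\|f\|\,T^{3/4}$, where $\|\cdot\|$ is a \emph{mixed} smoothness norm (roughly $s=s(n)$ derivatives in each coordinate, needed so the frequency sum converges). For the product $f^{\pm}$ this norm is of size $O(\eta^{-S})$ with $S=S(n)\,k$, whence
\[
\sum_{\{\gamma\}\in\Gamma_{\mathrm{eh}}',\ \rho_\gamma\le T} f^{\pm}(\epsilon_\gamma)=2^{n-1}\mathrm{Li}(T)\Big(\prod_{j=1}^k\mu(I_j)+O(\eta)\Big)+O(\eta^{-S}T^{3/4}).
\]
Sandwiching $N(T)$ between the two estimates and writing $D(T)=2^{n-1}\mathrm{Li}(T)+O(T^{3/4})$ (Theorem \ref{t:equi} with $f\equiv1$), then using $\mathrm{Li}(T)\asymp T/\log T$, gives
\[
\frac{N(T)}{D(T)}=\prod_{j=1}^k\mu(I_j)+O(\eta)+O\big(\eta^{-S}T^{-1/4}\log T\big).
\]

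Finally I would optimize by choosing $\eta=(T^{-1/4}\log T)^{1/(S+1)}$, which balances the two error terms and produces the claimed bound with $\delta_k=\tfrac{1}{4(S+1)}-o(1)>0$, depending on $k$ only through $S=S(n)\,k$. Because every constant in the construction of $\phi_j^{\pm}$ depends only on $\eta$ and $s$, the final implied constant is uniform over the arcs $I_1,\dots,I_k$, as the statement demands. The one nonroutine step, and the crux of the argument, is extracting from the proof of Theorem \ref{t:equi} the explicit dependence of its error on a (mixed) smoothness norm of $f$ with a fixed exponent; the sandwiching and the optimization are otherwise mechanical.
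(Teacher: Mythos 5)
Your proposal is correct in substance and follows the same overall strategy as the paper --- sandwich the indicator of $I_1\times\cdots\times I_k$ between smooth majorants and minorants, apply the quantitative equidistribution statement, and optimize the smoothing parameter --- but it differs in the two ingredients it treats as the crux. First, the "nonroutine step" you identify (an error term in Theorem \ref{t:equi} with explicit dependence on a norm of $f$) is already supplied by the paper as Proposition \ref{p:equi}: the error there is $\sqrt{\norm{f}_\infty C(f)}\,e^{x/4}+C(f)e^{7x/64}$ with $C(f)=\sum_{\sigma,m}|m|^*|a_f(\sigma m)|$ the weighted $\ell^1$ norm of the coefficients of $f$ in the orthogonal basis $\{H_{\sigma m}\}$ of $L^2(\mu^{n-1})$; so the quantity to track is this coefficient norm rather than a $C^s$ norm, and no re-derivation of Theorem \ref{t:equi} is needed. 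Second, the paper takes the majorants/minorants to be Beurling--Selberg trigonometric polynomials of degree $M$ in each of the $k$ active coordinates (citing the proof of Corollary 3.1 of the holonomy paper), rather than mollified indicators of enlarged/shrunk arcs. This choice buys band-limited approximants with $\norm{f^\pm}_\infty=O(1)$, uniformly bounded coefficients, and optimal $L^1(\mu)$ error $O(1/M)$, which makes $C(f^\pm)$ polynomial in $M$ with a clean exponent and leads directly to the explicit values of $\delta_k$ recorded in the remark (e.g.\ $\delta_k=\tfrac1{2(k+2)}$ under Ramanujan). Your mollification route is valid and yields a positive $\delta_k$ uniform in the arcs, which is all the corollary asserts, but the crude $C^s$-norm bookkeeping with exponent $S=S(n)k$ is lossier than necessary, and your single-term optimization ignores the second, Ramanujan-sensitive piece $C(f)e^{7x/64}$ of the error, which is why your exponent $\tfrac1{4(S+1)}$ does not match the paper's. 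Note also that the paper passes from the weighted sum of Proposition \ref{p:equi} to the sharp count by partial summation (as in the proof of Theorem \ref{t:equi}), a step you implicitly fold into your use of Theorem \ref{t:equi} directly; that is harmless here.
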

\begin{rem}
Assuming the Selberg-Ramanujan conjecture we can take $\delta_k=\frac{1}{2(k+2)}$. Unconditionally using the known bounds for the spectral gap, due in this setting to Blomer and Brumley \cite{BlomerBrumley10}, we can take $\delta_1=\frac{1}{6},\delta_2=\frac{1}{8}$ and $\delta_k=\tfrac{25}{64(k+1)}$ for $k>2$. Since the error term in this result does not depend on the choice of arcs $I_j$, we can also let the arcs shrink with $T$, at any rate slower than the error term of $O(T^{-\delta_k})$.
\end{rem}

For the Hilbert modular group, $\Gamma_F$, with $F$ a totally real number field, there is a nice interpretation of these results in terms of class numbers and fundamental units of quadratic forms. Fix a field embedding $\iota :F\hookrightarrow \bbR^n$ compatible with the identification of $\Gamma_F\subseteq \PSL_2(\bbR)^n$. Given a binary quadratic form $q(x,y)=ax^2+bxy+cy^2$ with $a,b,c\in \calO_F$, its discriminant is $D=b^2-4ac$, its divisor is the ideal $(a,b,c)$, its splitting field is $F(\sqrt{D})$ and its primitive discriminant is the ideal $d=\frac{(D)}{(a,b,c)^2}$. We say that two forms $q,q'$ are equivalent if $q'(x,y)=t q((x,y)\gamma)$ for some $t\in F^*$ and $\gamma\in \Gamma_F$. The splitting field and the primitive discriminant are invariant under this equivalence relation and we denote by $h(D,d)$ the number of equivalence classes with splitting field $F(\sqrt{D})$ and primitive discriminant $d$.
To any $D\in\calO_F$ with $\iota_j(D)<0$ for $j<n$ and $\iota_n(D)>0$ and an ideal $d\subset\calO_F$, there is an associated fundamental unit $\epsilon_{D,d}\in \calO_{F(\sqrt{D})}$ satisfying that $|\iota_j(\epsilon_{D,d})|^2=1$ for $j<n$ and $\iota_n(\epsilon_{D,d})>1$ (this is the fundamental solution to a certain Pellian equation; see section \ref{s:equi} for more details). We then have
\begin{cor}\label{c:units}
For any smooth function $f$ on $(S^1)^n$ that is even in each coordinate (i.e,  invariant under $\epsilon_j\mapsto \epsilon_j^{-1}$)
\[\mathop{\sum_{(D,d)}}_{\iota_n(\epsilon_{D,d})<T}  h(D,d)f(\iota_1(\epsilon_{D,d}),\ldots, \iota_{n-1}(\epsilon_{D,d}))=2^{n-2}\mu(f)\mathrm{Li}(T^2)+O_f(T^{3/2}),\]
where the sum is over discriminants $D\in \calO_F$ modulo $F^2$ that are negative in the first $n-1$ places.
\end{cor}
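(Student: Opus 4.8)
The plan is to deduce Corollary~\ref{c:units} from Theorem~\ref{t:equi} by translating the arithmetic of binary quadratic forms into the geometry of elliptic-hyperbolic conjugacy classes, via the classical Gauss correspondence over $\calO_F$. First I would set up, for $\Gamma_F=\PSL_2(\calO_F)$, the dictionary between equivalence classes of forms $q(x,y)=ax^2+bxy+cy^2$ and primitive elliptic-hyperbolic classes $\{\gamma\}\in\Gamma_{\mathrm{eh}}'$. To a primitive $\gamma$ with trace $t=\Tr(\gamma)\in\calO_F$ one attaches the discriminant $D=t^2-4$; the elliptic-hyperbolic condition $|\iota_j(t)|<2$ for $j<n$ and $|\iota_n(t)|>2$ is exactly the requirement that $D$ be negative in the first $n-1$ real places and positive in the $n$-th, so that $F(\sqrt D)$ is complex in the first $n-1$ places and real in the last. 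The eigenvalue $\epsilon=\tfrac{t+\sqrt D}{2}$ of $\gamma$ is then a relative unit of $\calO_{F(\sqrt D)}$ with $|\iota_j(\epsilon)|=1$ for $j<n$ and $\iota_n(\epsilon)>1$, and for a primitive class it is the fundamental such unit, i.e. $\epsilon=\epsilon_{D,d}$ where $d$ is the primitive discriminant ideal. Conversely each form determines its fundamental automorph, and scaling $q$ by $t\in F^*$ changes neither the associated point in $\bbH^n$ nor the automorph, so the $F^*\times\Gamma_F$-equivalence defining $h(D,d)$ matches conjugacy of automorphs.

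Under this correspondence the two conjugacy invariants of Theorem~\ref{t:equi} become purely arithmetic. Since $\iota_n(\epsilon)=\sqrt{\rho_\gamma}$ we get $\rho_\gamma=\iota_n(\epsilon_{D,d})^2$, so the cutoff $\rho_\gamma\le T^2$ is identical to $\iota_n(\epsilon_{D,d})\le T$; and since the elliptic eigenvalues are $\epsilon_{\gamma_j}=\iota_j(\epsilon_{D,d})\in S^1$ for $j<n$, the point $\epsilon_\gamma\in(S^1)^{n-1}$ equals $(\iota_1(\epsilon_{D,d}),\dots,\iota_{n-1}(\epsilon_{D,d}))$. In particular all classes sharing a given pair $(D,d)$ are evaluated by $f$ at the same point. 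Thus, summing Theorem~\ref{t:equi} with $T$ replaced by $T^2$ and grouping the primitive classes by their invariant pair $(D,d)$ turns the geometric sum into $\sum_{(D,d)} N(D,d)\, f(\iota_1(\epsilon_{D,d}),\dots,\iota_{n-1}(\epsilon_{D,d}))$, where $N(D,d)$ is the number of primitive classes with invariants $(D,d)$; the error $O((T^2)^{3/4})=O(T^{3/2})$ and a main term $2^{n-1}\mathrm{Li}(T^2)\mu^{n-1}(f)$ are inherited directly.

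It remains to identify $N(D,d)$ with $h(D,d)$ and to recover the constant $2^{n-2}$ from $2^{n-1}$. The mechanism is the symmetry $\{\gamma\}\leftrightarrow\{\gamma^{-1}\}$: the inverse class has the same $\rho_\gamma$ and replaces each $\epsilon_{\gamma_j}$ by its conjugate $\overline{\epsilon_{\gamma_j}}=\epsilon_{\gamma_j}^{-1}$, corresponding to the opposite form $q^{\mathrm{opp}}(x,y)=ax^2-bxy+cy^2$, which shares the pair $(D,d)$ and hence the fundamental unit $\epsilon_{D,d}$. Because $f$ is assumed even in each coordinate we have $f(\epsilon_\gamma)=f(\epsilon_{\gamma^{-1}})$, so the two members of each inverse pair contribute equally; this is precisely where the even-$f$ hypothesis is used, and folding the pair together against the class-number definition produces the factor $\tfrac12$ that converts $2^{n-1}$ into $2^{n-2}$. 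Matching the measures is immediate since $\mu$ is itself even, so its product $\mu^{n-1}$ is unaffected by the conjugation $\epsilon_{\gamma_j}\mapsto\overline{\epsilon_{\gamma_j}}$.

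The main obstacle is establishing the Gauss correspondence over $\calO_F$ with the exact multiplicity, i.e. pinning down $N(D,d)$ in terms of $h(D,d)$ together with the correct power of two. This requires identifying the eigenvalue of the fundamental automorph with the fundamental relative unit $\epsilon_{D,d}$ — including the subtle dependence on whether the relevant Pellian unit has relative norm $+1$ or $-1$ — carefully passing between $\SL_2(\calO_F)$ and $\PSL_2(\calO_F)$ (the role of the class of $-I$), and controlling the $F^*$-scaling of mixed sign built into the equivalence as well as the distinction between a form and its opposite. One must also verify that the self-paired (ambiguous) classes with $\{\gamma\}=\{\gamma^{-1}\}$ form a set that is negligible for the asymptotics, so that the doubling is exact to the stated order. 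Once these multiplicities are verified, the corollary follows by direct substitution into Theorem~\ref{t:equi} and the equidistribution of $\epsilon_\gamma$ against $\mu^{n-1}$.
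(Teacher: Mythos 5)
Your proposal is correct and follows essentially the same route as the paper: apply Theorem \ref{t:equi} with $T$ replaced by $T^2$, use the correspondence between form classes with invariants $(D,d)$ and pairs of primitive classes $\{\gamma\},\{\gamma^{-1}\}$ to convert $2^{n-1}$ into $2^{n-2}$, and invoke the evenness of $f$ to absorb the sign ambiguities $\epsilon_{\gamma_j}=\iota_j(\epsilon_{D,d})^{\pm1}$. Your normalization $\rho_\gamma=\iota_n(\epsilon_{D,d})^2$ is the one consistent with the stated main term $\mathrm{Li}(T^2)$ and error $O(T^{3/2})$ (the paper's line ``$\rho_\gamma^2=\iota_n(\epsilon_{D,d})$'' is a typo), and your extra care about ambiguous self-paired classes and the $\SL$ versus $\PSL$ bookkeeping addresses points the paper leaves implicit.
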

\begin{rem}
When $[F:\bbQ]>2$ this result already follows from \cite[Corollary 6.3]{Kelmer10Holonomy}.
\end{rem}

\section{Notation and preliminaries}\label{s:Background}
We use the notation  $A=O(B)$ or $A\lesssim B$ to indicate that $A\leq cB$
for some constant $c$, and we use subscripts (e.g., $A=O_\epsilon(B)$ or $A\lesssim_\epsilon B$) to indicate that the constant $c=c(\epsilon)$ may depend on the parameter.  We also use the notation $A(T)=o(B(T))$ to indicate that $A(T)/B(T)\to 0$ as $T\to\infty$, and $A(T)\sim B(T)$ if $A(T)/B(T)\to 1$.
\subsection{Coordinates}
Let $G=\PSL_2(\bbR)$. For any $x,t\in\bbR$ and $\theta\in [-\pi,\pi]$ let
$$n_x=\begin{pmatrix} 1 & x \\ 0& 1\end{pmatrix},\;a_t=\begin{pmatrix} e^{t/2} & 0 \\ 0& e^{-t/2}\end{pmatrix},\;k_\theta=\begin{pmatrix} \cos(\tfrac \theta 2) & \sin(\tfrac \theta 2) \\ -\sin(\tfrac \theta 2) & \cos(\tfrac \theta 2) \end{pmatrix}.$$
Any $g\in G$ has a unique decomposition $g=n_xa_tk_\theta$. In these coordinates the Haar measure of $G$ is given by $dg=e^{-t}dxdtd\theta$.

There is a natural action of $G$ on the upper half plane $\bbH=\{z=x+iy|y>0\}$ by linear fractional transformations
($g=\left(\begin{smallmatrix} a &b \\ c &d\end{smallmatrix}\right)$ sends $z$ to $\frac{az+b}{cz+d}$). The stabilizer of $i\in \bbH$ is $K=\SO(2)$ and we can identify $\bbH=G/K$. For this identification it is convenient to use the notation
$p_{z}=n_xa_{\ln(y)}=\begin{pmatrix}
\sqrt{y}  & x/ \sqrt{y}  \\
0 & 1/ \sqrt{y}
\end{pmatrix}$ for $z=x+iy\in \bbH$. For any $g\in \PSL(2,\bbR)$ we also have a decomposition $g=p_zk_\theta$, and in these coordinates we have $dg(z,\theta)=dzd\theta$, where $dz=\frac{dxdy}{y^2}$ is the hyperbolic area on $\bbH$.

\subsection{Invariant operators}
The ring of left $G$ invariant differential operators is generated by the Casimir operator, $\Omega$, given in the $z,\theta$ coordinates by (see \cite[Chapter X \S 2]{Lang85})
\begin{equation}\label{e:Casimir}\Omega=y^2(\pd{^2}{x^2}+\pd{^2}{y^2})-2y\pd{}{\theta}\pd{}{x}.\end{equation}

For any $m\in \bbZ$ let $\chi_m$ denote the characters of $K=\SO(2)$
given by $\chi_m(k_\theta)=e^{i m \theta}$.
We say that a function $\psi$ on $\PSL_2(\bbR)$ has right $K$-type $m$ (or wight $m$), if $\psi(gk)=\psi(g)\chi_m(k)$.
The ladder operators
\begin{equation}\label{e:lader}\calL^{\pm}=\pm 2iye^{\pm i\theta}(\pd{}{x}\mp i\pd{}{y})\mp 2ie^{\pm i\theta}\pd{}{\theta},\end{equation}
commute with $\Omega$ and act as raising and lowering operators between the different weights. That is, if $\psi$ is of weight $m$, then $\calL^{\pm}\psi$ is of weight $m\pm 1$.
Moreover, if $\psi$ an eigenfunction of $\Omega$ of weight $m$ and eigenvalue $\lambda$ then
(by \cite[Chapter VI \S 4,5]{Lang85})
\[\calL^\pm \calL^\mp\psi=4(\pm m(1\mp m)-\lambda)\psi.\]

For $m\in \bbZ$, consider the function $\vphi_{s,m}$ on $\PSL_2(\bbR)$ defined by
\begin{equation}\label{e:vphi} \vphi_{s,m}(n_xa_tk_\theta)=e^{st}e^{im \theta}.\end{equation}
This function is of weight $m$ and is an eigenfunction of $\Omega$ with eigenvalue $s(1-s)$. We also note for future reference that it transforms under the ladder operators via
\begin{equation}\label{e:laddervphi}\calL^\pm \vphi_{s,m}=2(s\pm m)\vphi_{s,m}\end{equation}

\begin{rem}\label{r:Ktype}
We can identify a function $\psi$ of weight $m$, with a corresponding function on $\bbH$ given by $\tilde{\psi}(z)=\psi(p_z)$. The condition that $\psi(\gamma g)=\psi(g)$ for some $\gamma=\left(\begin{smallmatrix} a & b \\ c & d\end{smallmatrix}\right)\in \PSL_2(\bbR)$ is then equivalent to the condition that
$\tilde{\psi}(\gamma z)=\tilde{\psi}(z)j_\gamma(z)^m$, with $j_\gamma(z)=\frac{cz+d}{c\bar{z}+d}$.
With this identification, the Casimir operator $\Omega$ becomes the $m$-Laplace operator $\lap_m=y^2(\pd{^2}{x^2}+\pd{^2}{y^2})-2imy\pd{}{x}$, and the ladder operators become $\calL^\pm_m=\pm 2y(i\pd{}{x}\pm \pd{}{y})\pm 2m$. This identification is convenient for certain calculations as well as for comparison with some of the literature.
\end{rem}

\subsection{Products}
We now consider a product of $n>1$ copies, $G=\prod_{j=1}^n G_j$, with each $G_j=\PSL_2(\bbR)$. Any $g\in G$ is of the form $g=(g_1,\ldots,g_n)$. We use coordinates on $G$ coming from each factor as above (e.g., $k_\theta=(k_{\theta_1},\ldots, k_{\theta_n})$ for $\theta\in [-\pi,\pi]^n$, $a_t=(a_{t_1},\ldots, a_{t_n})$ for $t\in \bbR^n$ etc.).

For $m\in \bbZ^n$ let $\chi_m(k_\theta)=e^{im\cdot \theta}$. We say that a function $\psi$ on $G$ is of wight $m$ if $\psi(gk)=\psi(g)\chi_m(k)$. The ladder operators $\calL_j^{\pm}$ send functions of wight $m$ to functions of weight $m\pm e_j$ and commute with $\Omega_j$ (and clearly also with $\calL_i^\pm$ and $\Omega_i$ for all $i\neq j$). Here and below, $e_j\in \bbZ^n$ denotes the unit vector with $1$ in the $j$'th entry.

\subsection{Conjugacy classes}
Let $G=\prod_{j=1}^n G_j$ be as above, and $\Gamma\subset G$ an irreducible lattice (that is, $\Gamma$ is a discrete subgroup with $\vol(\Gamma\bs G)<\infty$ and such that the projection to any factor of $G$ is dense). 
The $\Gamma$-conjugacy classes are classified into the following types: \emph{Hyperbolic} (when all factors are hyperbolic), \emph{Elliptic} (when all factors are elliptic), \emph{Parabolic} (when all factors are parabolic) and \emph{mixed} (when some factors are hyperbolic and some are elliptic). We call a hyperbolic element regular if it does not have the same fixed point as a parabolic element of $\Gamma$, and we call it hyperbolic-parabolic if it does.

The conjugacy classes that will eventually contribute to the hybrid trace formula are the elliptic classes, $\Gamma_{\mathrm{e}}$, and the elliptic-hyperbolic classes, $\Gamma_{\mathrm{eh}}$, which are the mixed conjugacy classes that are elliptic in the first $n-1$ places and hyperbolic in the last.
In particular, any elliptic-hyperbolic class $\gamma\in \Gamma_{\mathrm{eh}}$ is conjugated in $G$ to $(a_{\ell_\gamma},k_{\theta_\gamma})$ for some $\ell_\gamma>0$ and $\theta_\gamma\in (-\pi,\pi)^{n-1}$ (correspondingly, $\epsilon_{\gamma_j}=e^{i\theta_{\gamma,j}/2}$ and $\rho_\gamma=e^{\ell_\gamma}$). Similarly, for an elliptic class we have that $\gamma\sim k_{\theta_\gamma}$ for $\theta_\gamma\in (-\pi,\pi)^n$.
\begin{rem}
For $\epsilon\in S^1$ the matrices $\left(\begin{smallmatrix} \epsilon& 0\\ 0& \bar\epsilon\end{smallmatrix}\right)$ and $\left(\begin{smallmatrix} \bar\epsilon& 0\\ 0& \epsilon\end{smallmatrix}\right)$ are conjugated $\PSL_2(\bbC)$. However, the matrices $k_\theta$ and $k_{\theta}^{-1}$ are not conjugated in $\PSL_2(\bbR)$ and we can use the relation $\epsilon_{\gamma_j}=e^{i\theta_{\gamma,j}}$ to to define $\epsilon_{\gamma_j}$ unambiguously.
\end{rem}

\section{Point pair invariants}
We recall some of the properties of point pair invariants on the hyperbolic plane and their lifts to functions on $\PSL_2(\bbR)$.
Let $\rho$ be a smooth compactly supported function on $(0,\infty)$. For any $m\in \bbZ$ we define a function on $\bbH\times\bbH$ as in \cite[Page 386]{Hejhal83} by
\begin{equation}\label{e:pointpair}\tilde{f}(z,w)=(-1)^m\rho\left(\frac{|z-w|^2}{\Im(z)\Im(w)}\right)\left[\frac{\bar{z}-w}{z-\bar{w}}\right]^m.\end{equation}
For any $g\in \PSL_2(\bbR)$ we have $\tilde{f}(gz,gw)=j_g(z)^mf(z,w)j_g(w)^{-m}$ and we can extend $\tilde{f}$ to a function $f$ on $\PSL_2(\bbR)\times \PSL_2(\bbR)$ by setting
$$f(p_zk_1,p_wk_2)=\chi_m(k_1k_2^{-1})\tilde{f}(z,w).$$
We call a function obtained in this manner a wight $m$ point pair invariant.
Such a point pair invariant defines an integral operator
\[L_f\psi(g)=\int_G f(g,g')\psi(g')dg',\]
sending functions of wight $m$ to functions of wight $m$ and vanishing on functions of other $K$-types.
Moreover, any $\Omega$-eigenfunction $\psi$ of wight $m$ and eigenvalue $\lambda=\frac{1}{4}+r^2$ is also an eigenfunction of $L_f$ with eigenvalue $\calS_mf(r)$, where $\calS_m(f)$ denotes the Selberg transform of $f$ given by
\begin{eqnarray}\label{e:ST}\calS_mf(r)&=&\int_{\bbH} \tilde{f}(i,z)\Im(z)^{\frac{1}{2}+ir}dz\\
\nonumber&=& \int_G f(1,g)\vphi_{\frac{1}{2}+ir,m}(g)dg\end{eqnarray}
(see \cite[pages 364 and 392]{Hejhal76}).

The functions $h(r)=\calS_mf(r)$ obtained in this way are in the Paley-Wiener space $\mathrm{PW}^w(\bbC)$ of even holomorphic functions of uniform exponential type (that is, the Fourier transform $\hat{h}$ is even smooth and compactly supported). Moreover, given $h\in \mathrm{PW}^w(\bbC)$, for any $m\in \bbZ^n$ there is a (unique) weight $m$ point pair invariant satisfying $\calS_mf=h$.
Specifically, given $h\in \mathrm{PW}^w(\bbC)$ let
$\hat{h}(u)=\frac{1}{2\pi}\int_\bbR h(r)e^{-iru}dr$ be its Fourier transform (so that $\hat{h}\in C^\infty_c(\bbR)$ and $h(r)=\int_\bbR \hat{h}(u)e^{iru}du$),
and define the auxiliary function $Q\in C^\infty_c(0,\infty)$ by $\hat{h}(u)=Q(4\sinh^2(u/2))$. We can then recover the functions $\rho$ and $Q$ from each other by the relations (see \cite[page 386]{Hejhal83})
\begin{equation}\label{e:rho}
\rho(y)=-\frac{1}{\pi}\int_\bbR Q'(y+t^2)\left[\frac{\sqrt{y+4+t^2}-t}{\sqrt{y+4+t^2}-t}\right]^mdu,
\end{equation}
\begin{equation}\label{e:Q}
Q(w)=\int_\bbR \rho(w+v^2)\left[\frac{\sqrt{w+4}+iu}{\sqrt{w+4}-iu}\right]^mdu.
\end{equation}

\subsection{Orbital integrals}
Fix $h\in \mathrm{PW}^w(\bbC)$ with Fourier transform $\hat{h}\in C^\infty_c(\bbR)$. 
For any $m\in \bbZ$, let $f_m$ denote a weight $m$ point pair invariant such that
$\calS_mf_m=h$. For $\gamma\in G$ let $G_\gamma$ denote the centralizer of $\gamma$ in $G$ and consider the orbital integral
\begin{equation}\label{e:orbital}I_m(\gamma;h)=\int_{G_\gamma\bs G}f_m(g,\gamma g)dg.\end{equation}
Here $dg$ denotes an (appropriately normalized) measure on $G_\gamma\bs G$ obtained from
Haar measure. Note that $I_m(\gamma;h)$ only depends on $\gamma$'s $G$-conjugacy class. We then have
\begin{lem}\label{l:orbital}
For any $m\in \bbZ$
\begin{equation} I_m(\gamma;h)=\left\lbrace\begin{array}{lc}
\frac{-1}{4\pi}\int_\bbR \frac{e^{-mu} \hat{h}'(u)}{\sinh(u/2)}du & \gamma=1\\
\frac{\hat{h}(\ell)}{\sinh(\ell/2)}& \gamma\sim a_\ell\\
\frac{\tilde{h}(\theta,m)}{\sin(\theta/2)}& \gamma\sim k_\theta\end{array}\right.\end{equation}
where
\begin{eqnarray}\label{e:hm}
\tilde{h}(\theta,m)=\frac{i}{4}\int_\bbR \hat{h}(u)\left[\frac{e^{\frac{(2m-1)(u+i\theta)}{2}}(e^u-e^{i\theta})}{\cosh(u)-\cos(\theta)}\right]du.
\end{eqnarray}

In particular, $I_0(1;h)=\frac{1}{4\pi}\int_{\bbR}h(r)r\tanh(\pi r)dr$ and for $m\neq 0$ and $\sigma=\sgn(m)=\frac{m}{|m|}$ we have for the trivial class
\[I_m(1;h)-I_{m-\sigma}(1;h)=\frac{|m|-\tfrac{1}{2}}{2\pi}h(i(|m|-\tfrac{1}{2})),\]
for $\gamma\sim k_\theta$ elliptic
\[I_m(k_\theta;h)-I_{m-\sigma}(k_\theta;h)=\frac{e^{im\theta}}{1-e^{ i\sigma\theta}}h(i(|m|-\tfrac{1}{2})),\]
and for $\gamma\sim a_\ell$ hyperbolic $I_m(a_\ell;h)$ does not depend on $m$.
\end{lem}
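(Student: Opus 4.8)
The plan is to compute each orbital integral by exploiting the invariances of the kernel $f_m$ together with the Selberg-transform relations \eqref{e:rho}, \eqref{e:Q} and the identity $\hat h(u)=Q(4\sinh^2(u/2))$. Directly from $f(p_zk_1,p_wk_2)=\chi_m(k_1k_2^{-1})\tilde f(z,w)$ and $\tilde f(gz,gw)=j_g(z)^m\tilde f(z,w)j_g(w)^{-m}$ one reads off two invariances: $f_m(hg,hg')=f_m(g,g')$ for all $h\in G$, and $f_m(gk,g'k)=f_m(g,g')$ for $k\in K$. Writing $F_m(g)=f_m(1,g)$, the first gives $I_m(\gamma;h)=\int_{G_\gamma\bs G}F_m(g^{-1}\gamma g)\,dg$, and the two together give $F_m(k^{-1}gk)=f_m(k,gk)=f_m(1,g)=F_m(g)$, i.e.\ $F_m$ is invariant under conjugation by $K$. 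The three cases of the lemma correspond to the centralizers $G_1=G$, $G_{a_\ell}=A$ and $G_{k_\theta}=K$, and I would treat them in turn.

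For the identity class $G_1=G$, so with the point-mass normalization $I_m(1;h)=f_m(g,g)=\tilde f(z,z)=\rho(0)$ (where $\rho$ is the weight-$m$ profile attached to $f_m$), and it remains to evaluate $\rho(0)$ from \eqref{e:rho}. After correcting the evident typographical slips in \eqref{e:rho}, setting $y=0$ and substituting $t=2\sinh(u/2)$ gives $\sqrt{4+t^2}=2\cosh(u/2)$, so the bracket collapses to $e^{-mu}$, while $\hat h(u)=Q(4\sinh^2(u/2))$ yields $Q'(t^2)\,dt=\frac{\hat h'(u)}{4\sinh(u/2)}\,du$; assembling these produces exactly $-\frac1{4\pi}\int_\bbR\frac{e^{-mu}\hat h'(u)}{\sinh(u/2)}\,du$. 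For $m=0$ this is the classical identity-term computation: an integration by parts together with the known transform pair turns $-\frac1{4\pi}\int_\bbR\frac{\hat h'(u)}{\sinh(u/2)}\,du$ into $\frac1{4\pi}\int_\bbR h(r)\,r\tanh(\pi r)\,dr$.

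For the hyperbolic class I would use the coset representatives $g=n_xk_\theta$ for $A\bs G$, so that $g^{-1}a_\ell g=k_\theta^{-1}(n_{-x}a_\ell n_x)k_\theta$; the $K$-conjugation invariance of $F_m$ collapses the $\theta$-integral and leaves the one-dimensional integral $\int_\bbR F_m(n_{-x}a_\ell n_x)\,dx$ over the horocycle. Writing out $\tilde f(i,z)$ with $z=n_{-x}a_\ell n_x\cdot i$ and feeding in $\hat h(u)=Q(4\sinh^2(u/2))$ through \eqref{e:Q} reduces this to $\hat h(\ell)/\sinh(\ell/2)$. The point worth emphasizing is that although both $\rho$ and the phase $[\,\cdot\,]^m$ in the integrand depend on $m$, the output depends only on $\hat h$, which is pinned down by the normalization $\calS_mf_m=h$; this is precisely why $I_m(a_\ell;h)$ is independent of $m$. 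The elliptic class is the hard part: here $G_{k_\theta}=K$, and identifying $K\bs G\cong\bbH$ via $Kg\mapsto g^{-1}i$ turns the orbital integral into $\int_\bbH\tilde f(z,k_\theta z)\,dz$ (hyperbolic area), where $k_\theta$ acts as a rotation fixing $i$. Passing to geodesic polar coordinates about $i$, the angular integration extracts the weight-$m$ Fourier mode and generates the phase factor, while the radial integration is handled via \eqref{e:rho}; a contour shift then yields the closed form $\tilde h(\theta,m)/\sin(\theta/2)$ with $\tilde h$ as in \eqref{e:hm}. Keeping track of the interaction between the angular phase, the weight factor in \eqref{e:pointpair}, and the branch of the integrand is the main obstacle, since unlike the hyperbolic case the $m$-dependence genuinely survives.

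Finally, the ``in particular'' identities follow by algebra once the closed forms are in hand. Using the factorization $2(\cosh u-\cos\theta)=(e^u-e^{i\theta})(1-e^{-(u+i\theta)})$, formula \eqref{e:hm} simplifies to the compact shape $\tilde h(\theta,m)=\frac{i}{4}\int_\bbR\hat h(u)\,\frac{e^{m(u+i\theta)}}{\sinh((u+i\theta)/2)}\,du$. Then for $\sigma=\sgn(m)$ the elementary identity $\frac{e^{\sigma w}-1}{\sinh(w/2)}=2\sigma e^{\sigma w/2}$ (with $w=u+i\theta$) collapses $\tilde h(\theta,m)-\tilde h(\theta,m-\sigma)$ to $\frac{i\sigma}{2}e^{i\sigma(|m|-1/2)\theta}\int_\bbR\hat h(u)e^{\sigma(|m|-1/2)u}\,du$; recognizing the integral as $h(i(|m|-1/2))$ (evenness of $h$ lets one drop $\sigma$) and dividing by $\sin(\theta/2)$ gives the stated elliptic difference $\frac{e^{im\theta}}{1-e^{i\sigma\theta}}h(i(|m|-1/2))$. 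The same bookkeeping applied to $\rho(0)$—now with $\sinh(u/2)$ in place of $\sinh((u+i\theta)/2)$ and one integration by parts—gives the trivial-class difference $\frac{|m|-1/2}{2\pi}h(i(|m|-1/2))$, while $m=0$ recovers $I_0(1;h)$, and the $m$-independence of the hyperbolic term is immediate from its closed form.
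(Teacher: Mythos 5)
Your proposal is correct in substance, but it takes a more self-contained route than the paper, whose entire proof of the three closed forms is a citation to Hejhal (p.~396 for $\gamma=1$, eq.~6.19 for $\gamma\sim a_\ell$, eq.~6.30 for $\gamma\sim k_\theta$), followed by the remark that the $m$-dependence statements are a ``direct computation.'' What you do differently is attempt to derive the closed forms from the kernel itself: your evaluation of $\rho(0)$ via the substitution $t=2\sinh(u/2)$ in \eqref{e:rho} is exactly right and gives the identity term, and your observation that the hyperbolic orbital integral collapses to $Q(4\sinh^2(\ell/2))=\hat h(\ell)$ (up to the normalization of the measure on $A\backslash G$) via \eqref{e:Q} is the cleanest possible explanation of why $I_m(a_\ell;h)$ is independent of $m$ --- the normalization $\calS_m f_m=h$ pins down $Q$ uniformly in $m$. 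The one place your argument remains a sketch is the elliptic case: you describe the polar-coordinate/contour-shift strategy but do not actually verify that it produces the specific kernel in \eqref{e:hm}, and you acknowledge this is the hard part; this is precisely the step the paper outsources to Hejhal's eq.~6.30, so you are no worse off than the paper, but a referee of a self-contained write-up would want that computation carried out. The part of the lemma the paper actually claims to prove --- the ``in particular'' difference identities --- you verify correctly and in more detail than the paper: the factorization $2(\cosh u-\cos\theta)=(e^u-e^{i\theta})(1-e^{-(u+i\theta)})$ reducing \eqref{e:hm} to $\tfrac{i}{4}\int\hat h(u)\,e^{m(u+i\theta)}/\sinh(\tfrac{u+i\theta}{2})\,du$, the identity $(e^{\sigma w}-1)/\sinh(w/2)=2\sigma e^{\sigma w/2}$, and the recognition $\int\hat h(u)e^{\sigma(|m|-1/2)u}du=h(i(|m|-\tfrac12))$ all check out, as does the analogous integration-by-parts for the trivial class. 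What your approach buys is transparency about where each normalization and each $m$-dependence enters; what the paper's approach buys is brevity and an audit trail to the standard reference.
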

\begin{proof}
For the formula for $I_m(\gamma;h)$  see \cite[page 396]{Hejhal83} when $\gamma=1$, \cite[equation 6.19 on p. 389]{Hejhal83} when $\gamma\sim a_\ell$ and  \cite[equation 6.30 on p. 394]{Hejhal83} when $\gamma\sim k_\theta$. The dependence on $m$ for the trivial and elliptic classes then follow from these formulas by a direct computation.
\end{proof}

Two additional integral transforms that we will need are
\begin{equation}\label{e:Ip}
I_m^p(h)=\int_0^\infty f_m(1,n_x)\log(x)dx, \end{equation}
and
\begin{equation}\label{e:Ihp}
I_m^{hp}(t;h)=\int_\bbR f_m(n_x,a_t n_x)\log(1+x^2)dx.
\end{equation}
In particular we are interested on their dependence on $m$; we show
\begin{lem}\label{l:intp} For $m\in \bbZ$ with $\sigma=\sgn(m)\neq 0$,
\begin{eqnarray*}
I_m^p(h)-I_{m-\sigma}^p(h)=\frac{2|m|-1}{8\pi}\int_\bbR \frac{h(t)}{(|m|-\tfrac{1}{2})^2+t^2}dt-\frac{1}{4}h(i\tfrac{2|m|-1}{2})\end{eqnarray*}
\end{lem}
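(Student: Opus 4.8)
Throughout I may assume $m\ge 1$, so that $\sigma=1$ and the claim is the evaluation of $I^p_m(h)-I^p_{m-1}(h)$; the case $m\le -1$ follows by symmetry, since the asserted right-hand side depends only on $|m|$ and $\hat h$ is even. The first step is to make the integrand in \eqref{e:Ip} explicit. Since $1=p_i$ and $n_x=p_{x+i}$, we have $f_m(1,n_x)=\tilde f_m(i,x+i)$, and substituting $z=i$, $w=x+i$ into \eqref{e:pointpair} gives
\[
f_m(1,n_x)=(-1)^m\rho_m(x^2)\left(\frac{x+2i}{x-2i}\right)^m,
\]
where $\rho_m$ is the radial profile of the weight $m$ point pair invariant normalized by $\calS_m f_m=h$. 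Because $\hat h\in C_c^\infty$, the auxiliary function $Q$ defined by $\hat h(u)=Q(4\sinh^2(u/2))$ is compactly supported in $(0,\infty)$, and \eqref{e:rho} then shows $\rho_m$ is compactly supported; hence $x\mapsto f_m(1,n_x)$ is smooth with compact support and the only delicate point for convergence is the integrable singularity of $\log x$ at $x=0$.

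Next I would insert the inversion formula \eqref{e:rho}, writing $\rho_m(x^2)$ as an integral of $Q'(x^2+\tau^2)$ against a weight $m$ phase factor, and interchange the order of integration (justified by compact support). Passing to polar coordinates $x=r\cos\phi$, $\tau=r\sin\phi$ with $r>0$ and $\phi\in(-\tfrac\pi2,\tfrac\pi2)$ turns the argument of $Q'$ into $r^2$, which depends on the radius alone, so that
\[
I^p_m(h)=\int_0^\infty Q'(r^2)\,r\,A_m(r)\,dr,
\]
where $A_m(r)$ is an explicit angular integral over $\phi$ collecting the two weight $m$ brackets and the decomposition $\log x=\log r+\log\cos\phi$.

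The crux is the difference $I^p_m(h)-I^p_{m-1}(h)=\int_0^\infty Q'(r^2)\,r\,[A_m(r)-A_{m-1}(r)]\,dr$. Raising the weight multiplies the angular integrand by one further factor of each bracket, and I expect $A_m-A_{m-1}$ to evaluate in closed form in $r$ after performing the $\phi$-integral. Concretely, I anticipate that $I^p_m(h)$ itself takes the shape $C_0-\tfrac14\int_0^\infty\frac{\hat h(u)}{\sinh(u/2)}\cosh(mu)\,du$ for an $m$-independent constant $C_0$ (the kernel $1/\sinh(u/2)$ is the same one occurring in the identity orbital integral of Lemma~\ref{l:orbital}); the individual integral diverges at $u=0$, which is why only the difference appears in the statement. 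Substituting $r=2\sinh(u/2)$ to return from $Q$ to $\hat h$ and integrating by parts to remove the derivative on $Q$, the difference becomes
\[
I^p_m(h)-I^p_{m-1}(h)=-\tfrac14\int_0^\infty\frac{\hat h(u)}{\sinh(u/2)}\big(\cosh(mu)-\cosh((m-1)u)\big)\,du,
\]
and the identity $\cosh(mu)-\cosh((m-1)u)=2\sinh(u/2)\sinh\!\big((m-\tfrac12)u\big)$ collapses this to $-\tfrac12\int_0^\infty\hat h(u)\sinh\!\big((m-\tfrac12)u\big)\,du$.

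Finally I would verify that this is the asserted expression. Writing $h(t)=\int_\bbR\hat h(u)e^{itu}\,du$ and using $\int_\bbR\frac{e^{itu}}{(m-\frac12)^2+t^2}\,dt=\frac{\pi}{m-\frac12}e^{-(m-\frac12)|u|}$ together with $h\big(i(m-\tfrac12)\big)=\int_\bbR\hat h(u)e^{-(m-\frac12)u}\,du$, the right-hand side of the Lemma equals $\tfrac14\int_\bbR\hat h(u)\big(e^{-(m-\frac12)|u|}-e^{-(m-\frac12)u}\big)\,du$, which, since $\hat h$ is even, is exactly $-\tfrac12\int_0^\infty\hat h(u)\sinh\!\big((m-\tfrac12)u\big)\,du$. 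The main obstacle is the explicit evaluation of the angular integral $A_m(r)$, and especially of the difference $A_m-A_{m-1}$: one must control the product of the real bracket produced by the inversion \eqref{e:rho} and the unimodular bracket $(\frac{x+2i}{x-2i})^m$, and evaluate the residual $\log\cos\phi$ integral in closed form. Secondary technical points are the justification of the interchange of integration, the integration by parts near $u=0$, and the behaviour of the phase factors as $r\to 0$.
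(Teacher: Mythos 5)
Your proposal reduces the lemma to the claim that
\[
I^p_m(h)-I^p_{m-1}(h)=-\tfrac12\int_0^\infty\hat h(u)\sinh\big((m-\tfrac12)u\big)\,du,
\]
and your final verification that this expression coincides with the asserted right-hand side (via $\int_\bbR\frac{e^{itu}}{b^2+t^2}dt=\frac{\pi}{b}e^{-b|u|}$ and the evenness of $\hat h$) is correct. But the derivation of that intermediate formula is exactly the content of the lemma, and you do not actually carry it out: you ``anticipate'' that $I^p_m(h)=C_0-\tfrac14\int_0^\infty\frac{\hat h(u)}{\sinh(u/2)}\cosh(mu)\,du$ and explicitly flag the evaluation of the angular integral $A_m(r)$, and of the difference $A_m-A_{m-1}$, as the main unresolved obstacle. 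So the crux of the argument is conjectured rather than proved. This is a genuine gap: everything that precedes it (the explicit form of $f_m(1,n_x)$, the reduction to a radial integral against $Q'$) is routine setup, and everything that follows it is bookkeeping; the lemma lives entirely in the step you skipped. The route you sketch is essentially the explicit computation of Hejhal (Vol.~I, pp.~406--411), which is what the paper cites for its first proof; to complete your argument you would need to reproduce that computation, including the interplay between the real bracket coming from \eqref{e:rho} and the unimodular factor $\left(\frac{x+2i}{x-2i}\right)^m$, and the residual $\log\cos\phi$ integral.

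It is worth noting that the paper also gives a second, self-contained proof (Section \ref{s:intp}) that sidesteps the explicit evaluation entirely: one applies the weight-$m$ trace formula machinery to $\Gamma=\PSL_2(\bbZ)$, where there are no hyperbolic-parabolic or mixed classes, computes the difference $\Tr(F_m-H_m)-\Tr(F_{m-1}-H_{m-1})$ once spectrally (it collapses to $h(i\tfrac{2m-1}{2})(\dim M_m(\Gamma)-\delta_{m,1})$ by the ladder operators) and once geometrically (identity, elliptic, parabolic and continuous terms, with $2(I^p_m(h)-I^p_{m-1}(h))$ as the unknown), and then solves for the parabolic term using the classical dimension formula for cusp forms of weight $2m$. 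If you want to avoid Hejhal's computation, that is the argument to adopt; as written, your proposal establishes the equivalence of two formulations of the lemma but proves neither.
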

\begin{proof}
The integral \eqref{e:Ip} is computed explicitly in \cite[page 406-411]{Hejhal76} and the dependence on $m$ follows from this computation. Instead of repeating this computation here, in section \ref{s:intp} below we give another proof of this identity that uses the trace formula and does not require the evaluation of $I_m^p(h)$ explicitly.
\end{proof}

It is also possible to compute \eqref{e:Ihp} explicitly in terms of $h$. However, for our purpose the following vanishing result will suffice.
\begin{lem}\label{l:Ihp}
If $\mathrm{supp}(\hat{h})\subseteq[-a,a]$ then $I_m^{hp}(t;h)=0$ for $|t|>\tfrac{a}{2}$.
\end{lem}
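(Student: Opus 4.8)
The plan is a pointwise support argument: I would show that the integrand $f_m(n_x,a_tn_x)$ vanishes identically in $x$ as soon as $|t|>a/2$, so that the weight $\log(1+x^2)$ plays no role. Writing $n_x=p_{z}$ with $z=x+i$ (trivial $K$-component) and $a_tn_x=p_{w}$ with $w=a_tn_x\cdot i$ (also trivial $K$-component), formula \eqref{e:pointpair} gives
\[
f_m(n_x,a_tn_x)=\tilde f(z,w)=(-1)^m\rho\!\left(\frac{|z-w|^2}{\Im z\,\Im w}\right)\left[\frac{\bar z-w}{z-\bar w}\right]^m .
\]
The underlying element is hyperbolic with axis the imaginary geodesic, and $z=x+i$ lies at hyperbolic distance $d$ from that axis with $\sinh d=|x|$, i.e.\ $\cosh^2 d=1+x^2$. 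Using the standard displacement identity $\frac{|z-\gamma z|^2}{\Im z\,\Im\gamma z}=4\sinh^2(\ell_\gamma/2)\cosh^2 d$ with translation length $\ell_\gamma=2t$, the argument of $\rho$ becomes
\[
\frac{|z-w|^2}{\Im z\,\Im w}=4\sinh^2(t)\,(1+x^2).
\]

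Next I would locate $\supp\rho$. Since $\hat h(u)=Q(4\sinh^2(u/2))$, the hypothesis $\supp\hat h\subseteq[-a,a]$ forces $\supp Q\subseteq[0,4\sinh^2(a/2)]$. Because the Abel-type relations \eqref{e:rho} and \eqref{e:Q} express $\rho(y)$ (resp.\ $Q(w)$) as an integral of $Q'$ (resp.\ $\rho$) over the shifted half-line $\{y+s^2:s\in\bbR\}$, they only ever evaluate the other function at arguments $\ge$ the outer variable; hence they preserve the upper support bound, and $\supp\rho\subseteq[0,4\sinh^2(a/2)]$.

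Finally I would combine the two facts. Since $1+x^2\ge 1$ for every $x$, the argument of $\rho$ satisfies $4\sinh^2(t)(1+x^2)\ge 4\sinh^2(t)$ uniformly in $x$. Thus if $|t|>a/2$ then $4\sinh^2(t)>4\sinh^2(a/2)$, so the argument lies outside $\supp\rho$ for every $x\in\bbR$; consequently $\rho$ vanishes along the whole orbit, $f_m(n_x,a_tn_x)=0$ for all $x$, and therefore $I_m^{hp}(t;h)=0$.

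The one genuinely delicate point—and precisely the source of the sharp threshold $a/2$ rather than $a$—is the normalization in the first step: the parameter $t$ must be read as \emph{half} the translation length of the hyperbolic–parabolic element (equivalently $\rho_\gamma=e^{2t}$), which is the indexing compatible with the powers $2lR^{(i)}$ appearing in the cusp sum of Theorem~\ref{t:htrace2}. With this convention the displacement factor is $\sinh^2 t$ and not $\sinh^2(t/2)$; a naive reading of $a_t$ as having translation length $t$ would only yield vanishing for $|t|>a$. Once the normalization is fixed, the remaining ingredients (the support bound for $\rho$ and the trivial inequality $1+x^2\ge 1$) are routine, and the presence or absence of the factor $\log(1+x^2)$ is immaterial since the entire integrand vanishes.
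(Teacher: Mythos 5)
Your proposal follows essentially the same route as the paper's proof: the paper likewise writes $f_m(n_x,a_tn_x)=(-1)^m\rho\bigl(\alpha^2(x^2+1)\bigr)\bigl[\tfrac{x\alpha+i\beta}{x\alpha-i\beta}\bigr]^m$, deduces from $\hat h(u)=Q(4\sinh^2(u/2))$ and $\supp\hat h\subseteq[-a,a]$ that $Q$, and hence via \eqref{e:rho} also $\rho$, vanishes beyond $4\sinh^2(a/2)$, and concludes that the integrand vanishes identically because $x^2+1\geq 1$; the factor $\log(1+x^2)$ is immaterial in both arguments.

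The one substantive difference is your normalization discussion, and there your resolution goes the wrong way relative to the paper's stated conventions. The paper defines $a_t=\mathrm{diag}(e^{t/2},e^{-t/2})$, which acts on $\bbH$ as $z\mapsto e^tz$ and has translation length exactly $t$, and it invokes Lemma \ref{l:Ihp} at $t=2\log|u_{q,j}|=2qR^{(i)}$, which is the \emph{full} translation length of $\gamma_{q,\alpha}$ (acting componentwise by $z\mapsto u_{q,j}^2z+\cdots$). So the claim that $t$ ``must be read as half the translation length'' is not compatible with the paper's definitions, and the appeal to the powers $2lR^{(i)}$ in Theorem \ref{t:htrace2} does not support it either, since those arise from Poisson summation in the continuous contribution, not from the hyperbolic--parabolic normalization. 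With the paper's conventions a direct computation gives displacement $4\sinh^2(t/2)(x^2+1)$, so the sharp threshold is $|t|>a$ rather than $|t|>a/2$; the paper's printed proof contains the matching slip $\alpha=2\sinh(t)$ in place of $2\sinh(t/2)$, which you have in effect reproduced and then rationalized. To your credit, you flagged the factor of two explicitly, which the paper does not. In any case the discrepancy is harmless where the lemma is applied: there $\hat h_1$ is supported in $(-\delta,\delta)$ with $\delta<\min_iR^{(i)}$ and $t=2qR^{(i)}\geq 2R^{(i)}>2\delta$, so the required vanishing holds under either reading.
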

\begin{proof}
Using \eqref{e:pointpair} for the point pair invariant we can write
\begin{eqnarray*}
f_m(n_x,a_t n_x)=(-1)^m\rho\left(\alpha^2(x^2+1)\right)\left[\frac{x\alpha+i\beta}{x\alpha-i\beta}\right]^m,
\end{eqnarray*}
with $\alpha=2\sinh(t)$ and $\beta=2\cosh(t)$.  The assumption that $\hat{h}(u)=Q(4\sinh^2(u/2))$ is supported on $[-a,a]$, implies that $Q(w)$ is supported on $(0,4\sinh^2(\tfrac{a}{2}))\subseteq(0,\alpha^2)$. Hence, $Q'(w)=0$ for $w>\alpha^2$ and from \eqref{e:rho} also $\rho(y)=0$ for $y>\alpha^2$, so that indeed
\begin{equation*}
(-1)^mI_m^{hp}(t;h)=\int_\bbR \rho(\alpha^2(x^2+1))\left[\frac{x\alpha+i\beta}{x\alpha-i\beta}\right]^m\log(1+x^2)dx=0.
\end{equation*}

\end{proof}
We finish this section with another integral identity that we will need (under a similar assumption on the support of $\hat{h}$).
\begin{lem}\label{l:integralcomp1}
Fix $a,b>0$, and let $\hat{h}\in C^\infty_c(\bbR)$ be even and supported on $[-a,a]$. Then
\[\frac{b}{\pi}\int_\bbR \frac{h(t)\cos(a t)}{b^2+t^2}dt=e^{-ab}h(ib).\]
\end{lem}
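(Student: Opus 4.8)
The plan is to pass to the Fourier side using the representation $h(t)=\int_\bbR \hat{h}(u)e^{iut}\,du$ supplied earlier in the excerpt, so that $h(ib)=\int_\bbR\hat h(u)e^{-bu}\,du$ makes sense by the same formula (recall $h$ is entire of exponential type since $\hat h\in C^\infty_c$). Substituting this into the left-hand side and interchanging the order of integration (justified because $\hat h$ has compact support and $\frac{\cos(at)}{b^2+t^2}$ is integrable), the problem reduces to evaluating the inner integral
\[
J(u)=\int_\bbR \frac{e^{iut}\cos(at)}{b^2+t^2}\,dt.
\]

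First I would compute $J(u)$ using the elementary Fourier/Poisson-kernel identity $\int_\bbR \frac{e^{ict}}{b^2+t^2}\,dt=\frac{\pi}{b}e^{-b|c|}$, valid for real $c$ and $b>0$. Writing $\cos(at)=\tfrac{1}{2}(e^{iat}+e^{-iat})$ gives
\[
J(u)=\frac{\pi}{2b}\left(e^{-b|u+a|}+e^{-b|u-a|}\right).
\]
The key step is then to use the hypothesis $\supp(\hat h)\subseteq[-a,a]$: for $u\in(-a,a)$ we have $u+a\ge 0$ and $u-a\le 0$, so the absolute values resolve as $|u+a|=u+a$ and $|u-a|=a-u$, and the two exponentials collapse to $e^{-ab}(e^{-bu}+e^{bu})=2e^{-ab}\cosh(bu)$. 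Hence $J(u)=\frac{\pi}{b}e^{-ab}\cosh(bu)$ for all $u$ in the support of $\hat h$, and therefore
\[
\frac{b}{\pi}\int_\bbR \frac{h(t)\cos(at)}{b^2+t^2}\,dt
= e^{-ab}\int_\bbR \hat h(u)\cosh(bu)\,du.
\]

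It remains to identify $\int_\bbR \hat h(u)\cosh(bu)\,du$ with $h(ib)$. This follows from the evenness of $h$ (equivalently of $\hat h$): since $\int_\bbR\hat h(u)e^{bu}\,du=h(-ib)=h(ib)$ and $\int_\bbR\hat h(u)e^{-bu}\,du=h(ib)$, averaging the two gives $\int_\bbR\hat h(u)\cosh(bu)\,du=h(ib)$, which completes the proof. The only genuine subtlety is the sign-resolution step above: the conclusion hinges precisely on $a$ being \emph{both} the frequency in $\cos(at)$ \emph{and} a bound for $\supp(\hat h)$, so that the two translated Poisson kernels fuse into a single clean factor $e^{-ab}\cosh(bu)$. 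Without the support assumption the contributions of $u$ with $|u|>a$ would produce the opposite sign pattern and the identity would fail, so I would take care to state explicitly where that hypothesis enters.
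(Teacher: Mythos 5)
Your proof is correct and follows essentially the same route as the paper: both pass to the Fourier side, invoke the Poisson-kernel identity $\int_\bbR \frac{e^{ict}}{b^2+t^2}\,dt=\frac{\pi}{b}e^{-b|c|}$, and then use $\supp(\hat h)\subseteq[-a,a]$ to resolve the resulting exponentials. The only cosmetic difference is that the paper absorbs $\cos(at)$ into shifted copies $\tfrac12(\hat h(u+a)+\hat h(u-a))$ before integrating, whereas you keep the cosine with the kernel and resolve the absolute values on the support directly; the two computations coincide after a change of variables.
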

\begin{proof}
Substitute $h(t)\cos(a t)=\frac{1}{2}\int_\bbR(\hat{h}(u+a)+\hat{h}(u-a))e^{iut}dt$ to get
\begin{eqnarray*}
\frac{b}{\pi}\int_\bbR \frac{h(t)\cos(a t)}{b^2+t^2}dt&=& \frac{b}{2\pi}\int_\bbR\int_\bbR(\hat{h}(u+a)+\hat{h}(u-a)) \frac{e^{iut}}{b^2+t^2}dudt\\
&=& \frac{1}{2}\int_\bbR(\hat{h}(u+a)+\hat{h}(u-a)) \left(\frac{b}{\pi}\int_\bbR\frac{e^{iut}}{b^2+t^2}dt\right)du\\
\end{eqnarray*}
Using the residue theorem we get $\frac{b}{\pi}\int_\bbR\frac{e^{iut}}{b^2+t^2}dt=e^{-b|u|}$. Hence
\begin{eqnarray*}
\frac{b}{\pi}\int_\bbR \frac{h(t)\cos(a t)}{b^2+t^2}dt&=&\frac{1}{2}\int_\bbR(\hat{h}(u+a)+\hat{h}(u-a)) e^{-b|u|}du\\
&=& \int_0^\infty(\hat{h}(u+a)+\hat{h}(u-a)) e^{-bu}du\\
&=&\int_{-a}^\infty\hat{h}(u) e^{-b(u+a)}du+\int_a^\infty \hat{h}(u) e^{-b(u-a)}du
\end{eqnarray*}
From our assumption that $\hat{h}$ is supported on $[-a,a]$ the second term vanishes and the first one equals $e^{-ab}h(ib)$.
\end{proof}

\section{Eisenstein series}
We now recall the construction from \cite{Efrat87} of the Eisenstein series spanning $L_c^2(\Gamma\bs \bbH^n)$, and use the ladder operators to obtain the corresponding Eisenstein series for $L_c^2(\Gamma\bs \bbH^n,m)$.
\subsection{Eisenstein series at $\infty$}
We first describe the Eisenstein series corresponding to the cusp at $\infty=(\infty,\ldots,\infty)$. Let $\Gamma_\infty$ denote the stabilizer of $\infty$ in $\Gamma$, so that a typical element of $\Gamma_\infty$ is of the form
\[\begin{pmatrix} u & \alpha\\0 & u\end{pmatrix}=\left(\begin{pmatrix} u_1 & \alpha_1\\0 & u_1^{-1}\end{pmatrix},\ldots,\begin{pmatrix} u_n & \alpha_n\\0 & u_n^{-1}\end{pmatrix}\right),\]
with $\prod_j|u_j|=1$. 

Consider the lattice $\calO\subseteq \bbR^n$ defined by
$$\calO=\{\alpha\in \bbR^{n}|\; n_\alpha\in \Gamma_\infty\},$$
and let $v=\vol(\bbR^n/\calO)$.
The group
$$U=\set{u |\exists \alpha,\;  \begin{pmatrix} u & \alpha\\ 0 & u^{-1}\end{pmatrix}\in \Gamma_\infty},$$
is a free group of rank $n-1$ and after fixing a set of generators $\epsilon_1,\ldots,\epsilon_{n-1}$ we can identify $U$ with $\bbZ^{n-1}$. Specifically, for $q\in \bbZ^{n-1}$ we denote by $u_q\in U$ the element $u_q=\epsilon_1^{q_1}\cdots\epsilon_{n-1}^{q_{n-1}}$. For $j=1,\ldots,n$ let $\epsilon_{i,j}$ denote the $j$-th coordinate of $\epsilon_i$ and consider the matrix
\begin{equation}\label{e:Matrix}
D=\begin{pmatrix}
\tfrac{1}{n} & \log|\epsilon_{1,1}|&\cdots & \log|\epsilon_{n-1,1}| \\
\vdots & \vdots &\ddots & \vdots \\
\tfrac{1}{n} & \log|\epsilon_{1,n})|&\cdots & \log|\epsilon_{n-1,n})|
\end{pmatrix}.\end{equation}
We define the regulator of the cusp at $\infty$ as $R=|\det(D)|$, and note that it does not depend on the choice of basis.

Given $q\in \bbZ^{n-1}$ let $\eta(q)=(\eta_1(q),\ldots,\eta_n(q))$ be defined by the equation
\[(s+\pi i\eta(q))D=(s,\pi iq),\]
where we use the notation $s+\pi i\eta(q)=(s+\pi i\eta_1(q),\ldots,s+\pi i\eta_n(q))$ and $(s,\pi iq)=(s,\pi iq_1,\ldots, \pi i q_{n-1})$.

For $m\in \bbZ^n$ and $q\in \bbZ^{n-1}$ consider the function
\begin{equation}\label{e:vphiq}\vphi_{s,m}(g,q)=\prod_j \vphi_{s+\pi i\eta_j(q),m_j}(g_j),\end{equation} where $\vphi_{s+\pi i\eta_j(q),m_j}$ is given in \eqref{e:vphi}.
Note that this function is of weight $m$, it is invariant under $\Gamma_\infty$ and it is a joint eigenfunction of $\Omega_j,\;j=1,\ldots,n$ with eigenvalues $s_j(1-s_j)$ where $s_j=s+\pi i\eta_j(q)$.

For $q\in \bbZ^{n-1}$, $m\in \bbZ^n$ and $s\in \bbC$ with $\Re(s)>1$, the weight $m$ Eisenstein series at $\infty$ is given by the following absolutely convergent series
\[E_m(g,s,q)=\sum_{\gamma\in \Gamma_\infty\bs \Gamma} \vphi_{s,m}(\gamma g,q).\]
For $m=0$ these are the Eisenstein series defined in \cite[Chapter II]{Efrat87}, and we can use the ladder operators (see \eqref{e:laddervphi}) to get that for any $m\in \bbZ^n$
\[\calL_j^\pm E_m(g,s,q)=2[m_j\pm (s+\pi i\eta_j(q))]E_{m\pm e_j}(g,s,q).\]

\subsection{Eisenstein series at other cusps}
Let $\xi_1,\ldots, \xi_\kappa$ denote a complete set of inequivalent cusps of $\Gamma$. For each $i=1,\ldots,\kappa$, let $\Gamma_{i}\subseteq\Gamma$ denote the subgroup fixing $\xi_i$. Let $\tau_i\in G$ send $\xi_i$ to $\infty$, then a typical element of
$\tau_i\Gamma_i\tau_i^{-1}$ is of the form $\begin{pmatrix} u & \alpha\\0 & u^{-1}\end{pmatrix}$. The set of $u$'s obtained in this way forms a multiplicative group $U^{(i)}$, and the set of $\alpha$'s with $n_\alpha\in \tau_i\Gamma_i\tau_i^{-1} $ form a lattice $\calO^{(i)}$.  By fixing a set of generators for $U^{(i)}$ we define as before the matrix $D^{(i)}$ as well as the elements $u_q^{(i)}$ and $\eta^{(i)}(q)$ satisfying $(s+\pi i\eta^{(i)}(q))D^{(i)}=(s,\pi i q)$ for any $q\in \bbZ^{n-1}$. Let $R^{(i)}=|\det(D^{(i)})|$ and $v^{(i)}=\vol(\bbR^n/\calO^{(i)})$. The group $U^{(i)}$, and hence also $R^{(i)}$, don't depend on the choice of $\tau_i$, but the lattice $\calO^{(i)}$ and volume $v^{(i)}$ do; we can (and will) choose $\tau_i$ so that the products $v^{(i)}R^{(i)}=vR$ are all the same. We can now define the Eisenstein series at the $i$'th cusp by
\[E^{(i)}_m(g,s,q)=\sum_{\gamma\in \Gamma_i\bs \Gamma} \vphi_{s,m}^{(i)}(\tau_i\gamma g,q),\]
where $\vphi_{s,m}^{(i)}$ is defined as in \eqref{e:vphiq} using $\eta^{(i)}(q)$.

\subsection{Scattering matrix}
We can write each Eisenstein series in a Fourier decomposition relative to the coordinates in each of the cusps.
The constant terms in these decompositions are of particular importance and we denote them by
\[E_m^{(i,j)}(g,s,q)=\frac{1}{v^{(j)}}\int_{\bbR^n/\calO_j} E_m^{(i)}(n_x\tau_jg,s,q)dx.\]
We recall the following results from \cite[Capter II, Section 1]{Efrat87} regarding the analytic continuation and functional equation of the Eisenstein series (for the case $m=0$).
\begin{prop}\label{p:sphericalscattaring}
The constant terms of the (spherical) Eisenstein series $E_0^{(i)}(g,s,q)$ are of the form
\begin{equation}\label{e:const}
E_0^{(i,j)}(g,s,q)=\delta_{i,j}\vphi^{(i)}_{s,0}(g,q)+\phi^{(i,j)}(s,q)\vphi^{(i)}_{1-s,0}(g,-q),\end{equation}
The functions $\phi^{(i,j)}(s,q)$ as well as the Eisenstein series, have a meromorphic continuation to $\bbC$ and satisfy the functional equation
\begin{equation}\label{e:func}E_0(g,1-s,-q)=\Phi(s,q)E_0(g,s,q),\end{equation}
where the scattering matrix $\Phi(s,q)$ is the matrix with $(i,j)$-coefficients $\phi^{(i,j)}(s,q)$, and $E_0(g,s,q)$ is the column vector with $i$-th entry $E_0^{(i)}(g,s,q)$. Moreover, the scattering matrix satisfies that $\Phi(\tfrac{1}{2}+it,q)$ is unitary and that $\Phi(s,q)\Phi(1-s,-q)=I$.
\end{prop}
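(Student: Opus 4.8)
The plan is to follow the classical Langlands--Selberg theory of Eisenstein series, adapted to the product group $G=\prod_j G_j$ exactly as carried out in \cite[Chapter II, Section 1]{Efrat87}, so the task is really to reconstruct the standard argument in this setting. I would split it into three parts: the shape of the constant term \eqref{e:const}, the meromorphic continuation, and finally the functional equation together with the properties of $\Phi$. Throughout I reserve $i,j$ for cusp indices and $l$ for the coordinate index $1,\ldots,n$.

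First I would work in the region $\Re(s)>1$, where $E_0^{(i)}(g,s,q)$ converges absolutely, and compute the constant term $E_0^{(i,j)}$ by unfolding the sum over $\Gamma_i\bs\Gamma$ and integrating over $\bbR^n/\calO^{(j)}$. A Bruhat-type decomposition splits the cosets into those fixing the cusp $\xi_j$, which contribute the diagonal term $\delta_{i,j}\vphi^{(i)}_{s,0}(g,q)$, and the remaining ones, whose integral assembles into a Dirichlet series in $s$ multiplying $\vphi^{(i)}_{1-s,0}(g,-q)$; this defines $\phi^{(i,j)}(s,q)$. To rule out any other terms I would argue structurally rather than by the explicit computation: $E_0^{(i,j)}$ is independent of the nilpotent coordinates $x$, is a joint eigenfunction of each $\Omega_l$ with eigenvalue $s_l(1-s_l)$ where $s_l=s+\pi i\eta^{(j)}_l(q)$, and is invariant under $U^{(j)}$. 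The first two properties confine it to the $2^n$-dimensional span of the monomials $\prod_l y_l^{a_l}$ with $a_l\in\{s_l,1-s_l\}$, and invariance under $U^{(j)}\cong\bbZ^{n-1}$ then annihilates every mixed monomial, leaving exactly the two fully correlated ones $\vphi^{(j)}_{s,0}(g,q)$ and $\vphi^{(j)}_{1-s,0}(g,-q)$. This yields \eqref{e:const}.

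The main obstacle is the meromorphic continuation, which I would establish by the truncation/resolvent method. Fixing a truncation height $T$, the truncated Eisenstein series $\Lambda^T E_0^{(i)}$ is square integrable on $\Gamma\bs\bbH^n$, and the Maass--Selberg relations express $\langle \Lambda^T E_0^{(i)},\Lambda^T E_0^{(j)}\rangle$ as an explicit combination of $\phi^{(i,j)}(s,q)$, $\phi^{(i,j)}(\bar s,q)$ and exponential-polynomial terms in $T$. Since the combined Laplacian $\sum_l\Omega_l$ is essentially self-adjoint with resolvent meromorphic in $s$ off the unitary axis, one continues these inner products, hence the $\phi^{(i,j)}(s,q)$, first to a neighborhood of $\Re(s)=\tfrac12$ and then to all of $\bbC$; feeding the result back through \eqref{e:const} continues $E_0^{(i)}(g,s,q)$ itself. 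The genuine difficulty, relative to the single upper half-plane, is that the irreducibility of $\Gamma$ couples the $n$ coordinates through the lattice $\calO^{(j)}$ and the unit group $U^{(j)}$, so the problem does not factor into $n$ rank-one problems; the analysis must be run for the whole $(n-1)$-parameter family indexed by $q$ at once.

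With continuation in hand, the functional equation \eqref{e:func} follows from a uniqueness principle: both $E_0(g,1-s,-q)$ and $\Phi(s,q)E_0(g,s,q)$ are vectors of moderate-growth joint $\Omega_l$-eigenfunctions with the same eigenvalues, and comparing their constant terms via \eqref{e:const} shows that their difference has vanishing constant term at every cusp; a moderate-growth eigenfunction with vanishing constant terms, off the discrete spectrum, must itself vanish, forcing equality. Iterating \eqref{e:func} once more gives $\Phi(s,q)\Phi(1-s,-q)=I$. Finally, unitarity of $\Phi(\tfrac12+it,q)$ comes from the Maass--Selberg inner-product formula on the line $\Re(s)=\tfrac12$, where the explicit expression for $\langle \Lambda^T E_0^{(i)},\Lambda^T E_0^{(j)}\rangle$ forces $\Phi(\tfrac12+it,q)^{*}\Phi(\tfrac12+it,q)=I$; combined with $\Phi(s,q)\Phi(1-s,-q)=I$ and the reflection $\overline{\phi^{(i,j)}(\bar s,q)}=\phi^{(i,j)}(s,q)$, this gives the stated unitarity.
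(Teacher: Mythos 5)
The paper offers no proof of this proposition: it is stated verbatim as a recollection of \cite[Chapter II, Section 1]{Efrat87}, so there is no in-paper argument to compare yours against. Your reconstruction is the standard Selberg--Langlands route (unfolding for the constant term, Maass--Selberg plus the resolvent for continuation, uniqueness of eigenfunctions with prescribed constant terms for the functional equation), which is essentially what Efrat carries out. The best part of your write-up is the structural derivation of \eqref{e:const}: the observation that the constant term lies in the $2^n$-dimensional span of $\prod_l y_l^{a_l}$, $a_l\in\{s_l,1-s_l\}$, and that invariance under $U^{(j)}$ --- which has full rank precisely because $\Gamma$ is irreducible --- kills every mixed monomial for generic $s$, is exactly the point where the product case differs from rank one, and you identified it correctly.

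One step is stated imprecisely enough to be wrong as written. The difference $E_0(g,1-s,-q)-\Phi(s,q)E_0(g,s,q)$ does \emph{not} have vanishing constant terms. What happens is that the coefficients of the growing solution $\vphi_{s,0}(g,q)$ cancel by the very definition of $\Phi$, while the coefficients of the decaying solution $\vphi_{1-s,0}(g,-q)$ are the entries of $I-\Phi(s,q)\Phi(1-s,-q)$. The correct conclusion is therefore that the difference is square-integrable (all constant terms decay for $\Re(s)>\tfrac12$), hence is a discrete eigenfunction with eigenvalue $s(1-s)$, hence vanishes for $s$ off the discrete spectrum; and the vanishing of that residual coefficient \emph{is} the relation $\Phi(s,q)\Phi(1-s,-q)=I$. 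In other words, the functional equation and the inverse relation are forced simultaneously by this one argument --- the inverse relation is not a corollary obtained by ``iterating'' an already-established functional equation, and presenting it that way hides where it actually comes from. A second, minor point: the reflection formula you invoke for unitarity should read $\overline{\phi^{(i,j)}(s,q)}=\phi^{(i,j)}(\bar s,-q)$ (the parameter $q$ changes sign under conjugation because $\eta^{(i)}(q)$ enters multiplied by $i$); this is the form the paper uses later when it deduces $\sum_j\phi^{(i,j)}(\tfrac12+it,q)\phi^{(i,j)}(\tfrac12-it,-q)=1$.
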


We can use the ladder operators $\calL_j^\pm$ to get a similar result for the Eisenstein series for any weight.
Specifically, for $m\in \bbZ$ consider the rational function
\begin{equation}\label{e:Pm}P_m(s)=\prod_{k=1}^{|m|}\frac{k-s}{k-1+s}.\end{equation}
For $m\in\bbZ^n$ we consider the multivariable function $P_m=\prod_j P_{m_j}$ and
define the weight $m$ scattering matrix $\Phi_m$ by its entries
\begin{equation}\label{e:phim}\phi_m^{(i,j)}(s,q)=\phi^{(i,j)}(s,q)P_m(s+\pi i\eta^{(i)}(q))\end{equation}
Then, starting from $m=0$ and applying $\calL_j^{\pm}$ to \eqref{e:const} we get
\begin{cor}\label{c:scattaring}
The weight $m$ Eisenstein series satisfy the functional equation
\begin{equation}\label{e:func2}E_m(g,1-s,-q)=\Phi_m(s,q)E_m(g,s,q).\end{equation}
and its constant terms satisfy
\begin{equation}\label{e:const2}
E_m^{(i,j)}(g,s,q)=\delta_{i,j}\vphi^{(i)}_{s,m}(g,q)+\phi_m^{(i,j)}(s,q)\vphi^{(i)}_{1-s,m}(g,1-s,-q).\end{equation}
\end{cor}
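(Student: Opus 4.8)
The plan is to prove both \eqref{e:const2} and \eqref{e:func2} by induction on $|m|=\sum_j|m_j|$, building up from the spherical case $m=0$ (Proposition~\ref{p:sphericalscattaring}) one unit of weight at a time with the ladder operators. The key structural fact is that each $\calL_j^\pm$ is left-invariant, so it may be pulled inside the defining sum $E_m^{(i)}(g,s,q)=\sum_{\gamma\in\Gamma_i\bs\Gamma}\vphi_{s,m}^{(i)}(\tau_i\gamma g,q)$ and applied term by term, and it likewise commutes with the constant-term projection, which integrates only over the left unipotent variable. Thus from the weight-$0$ identities one manufactures the weight-$m$ identities by repeated application, using the eigenrelation \eqref{e:laddervphi} on the model functions $\vphi$ together with the matching relation $\calL_j^\pm E_m=2[m_j\pm(s+\pi i\eta^{(i)}_j(q))]E_{m\pm e_j}$ on the Eisenstein series.

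For the constant terms I would differentiate \eqref{e:const}. Applying $\calL_j^\pm$, the left-hand side acquires the overall scalar $2[m_j\pm(s+\pi i\eta^{(i)}_j(q))]$ and produces $E_{m\pm e_j}^{(i,j)}$, while on the right each model factor $\vphi^{(i)}_{s,m}$ and $\vphi^{(i)}_{1-s,m}(\,\cdot\,,-q)$ is raised by \eqref{e:laddervphi}, using $\eta^{(i)}(-q)=-\eta^{(i)}(q)$ on the reflected term. The scalar attached to the diagonal term matches the overall factor and cancels, leaving the scattering coefficient multiplied by a single rational factor coming from the reflected model function. Iterating $|m_j|$ times in each coordinate, these factors telescope,
\[\prod_{k=1}^{|m_j|}\frac{k-w}{k-1+w}=P_{m_j}(w),\qquad w=s+\pi i\eta^{(i)}_j(q),\]
which is exactly the definition \eqref{e:Pm}--\eqref{e:phim} of $\phi_m^{(i,j)}$ and yields \eqref{e:const2}. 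It is worth stressing that \eqref{e:const} is written in the source-cusp model functions $\vphi^{(i)}$ rather than the target-cusp ones: this is forced because $E^{(i)}_m$ is a joint $\Omega_p$-eigenfunction whose eigenvalues are fixed by $\eta^{(i)}$, so its constant term at every cusp is spanned by the two reflections of those same exponents. This is precisely what makes the accumulated rational factor depend only on the row index $i$, so that $\Phi_m$ is a diagonal twist of $\Phi$ and not something genuinely new.

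For the functional equation I would run the same differentiation on \eqref{e:func}; the elementary identity $P_m(w)P_m(1-w)=1$, immediate from \eqref{e:Pm}, ensures that the weight-$0$ relation $\Phi(s,q)\Phi(1-s,-q)=I$ lifts to $\Phi_m(s,q)\Phi_m(1-s,-q)=I$, after which \eqref{e:func2} is the statement that $E_m(g,1-s,-q)-\Phi_m(s,q)E_m(g,s,q)$ has vanishing constant terms and hence vanishes. I expect the main obstacle to be bookkeeping rather than conceptual: one must fix a single consistent sign convention for $\calL_j^\pm$ (the raising and lowering scalars in \eqref{e:laddervphi} and in the $E_m$-ladder relation have to be reconciled, including the correct placement of the $\pm$), propagate the reflection $(s,q)\mapsto(1-s,-q)$ correctly through the cusp-dependent $\eta^{(i)}$, and verify that the telescoped product collapses to $P_{m_j}$ and attaches to the intended (row) index. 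The one genuinely structural check is the compatibility of the scattering entries across cusps---that the exponents $\eta^{(i)}(q)$ entering the source-cusp model functions are consistent with the target-cusp Fourier data wherever $\phi^{(i,j)}$ is nonzero---which is what permits the telescoped factor to be pulled out as the row-diagonal $P_m(s+\pi i\eta^{(i)}(q))$; this compatibility is built into Efrat's construction \cite{Efrat87} of the spherical Eisenstein series.
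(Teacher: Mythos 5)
Your proposal is correct and follows essentially the same route as the paper, which simply states that the corollary is obtained "starting from $m=0$ and applying $\calL_j^{\pm}$ to \eqref{e:const}"; your telescoping of the ratios $\frac{k-w}{k-1+w}$ into $P_{m_j}(w)$ with $w=s+\pi i\eta_j^{(i)}(q)$ is exactly the computation implicit there. The additional bookkeeping you flag (sign conventions for $\calL_j^\pm$, the reflection $(s,q)\mapsto(1-s,-q)$, and $P_m(w)P_m(1-w)=1$ for lifting the functional equation) is the right content to make the paper's one-line argument rigorous.
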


\subsection{Spectral decomposition}
For $m\in \bbZ^n$ let $L^2(\Gamma\bs G,m)$ denote the subspace of $L^2(\Gamma\bs G)$ composed of functions of weight $m$, which can be naturally identified, as in remark \ref{r:Ktype}, with $L^2(\G\bs \bbH^n,m)$.
For each $m\in \bbZ^n,\; q\in\bbZ^{n-1}$ and each cusp, we have an isometry from $L^2(0,\infty)$ into $L^2(\G\bs G,m)$ defined on the dense set of smooth compactly supported functions by
$\rho\mapsto \int_0^\infty \rho(t)E_m^{(i)}(g,\tfrac{1}{2}+it,q)dt$.
Let $L^2_c(\G\bs G,m)$ denote the space spanned by the images of these isometries from all cusps and all $q\in \bbZ^{n-1}$. This space captures the continuous part of the spectrum. The discrete part $L^2_d(\G\bs G,m)$ is then defined as its orthogonal complement and it further decomposes as the finite dimensional residual space spanned by the residues of $E_m^{(i)}(g,s,0)$ with $s\in [0,1]$, and the space of cusp forms composed of functions that vanish at all cusps (see \cite[Theorm 8.4]{Efrat87}).

The operators $\Omega_j$ act on $L^2_{d}(\G\bs G,m)$ and we can decompose it as a direct sum of joint eigenspaces of $\Omega_1,\ldots,\Omega_n$
\[L^2_{d}(\G\bs G,m)=\bigoplus_{\lambda\in \Lambda(m)} V_\Gamma(m,\lambda),\]
where $\Lambda(m)\subset\bbR^n$ is the set of joint eigenvalues and $V_\Gamma(m,\lambda)$ the joint eigenspace corresponding to $\lambda=(\lambda_1,\ldots,\lambda_{n})$. We denote the dimension of each joint eigenspaces by $d(m,\lambda)=\dim V_\Gamma(m,\lambda)$.

Now for $m\in \bbZ^{n-1}$ with all $m_j\neq 0$ define the space
\[M_m(\Gamma)=\{\psi\in L^2_d(\G\bs G,m)|(\Omega_j+|m_j|(1-|m_j|))\psi=0,\;\forall j<n\},\]
where we identify $\bbZ^{n-1}\subset\bbZ^n$ by putting a zero in the last coordinate.

\section{The Hybrid trace formula}
For $m\in \bbZ^{n-1}$ with all $m_j\neq 0$ let $M_m(\Gamma)$ be as above. Let $\{\psi_{k}^{(m)}\}_{k=0}^\infty$ be an orthonormal basis for $M_m(\Gamma)$ composed of eigenfunctions of $\Omega_n$ with eigenvalues
\[0<\lambda_0(m)\leq \lambda_1(m)\leq \ldots\]
and use the parametrization $\lambda_k(m)=\frac{1}{4}+r_{k,m}^2$. We also denote by
$|m|^*=\prod_j(2|m_j|-1)$ and define the function
\[H_m(\theta)=\prod_{j=1}^{n-1}\frac{e^{im_j\theta_j}}{1-e^{i\sgn(m_j)\theta_j}}.\]
Let $h_0(r)$ be an even function that is analytic in a strip $|\Im(r)|<\tfrac{1}{2}+\delta$ and decay like $|h_0(r)|=O((1+\Re(r))^{-2-\delta})$ for some $\delta>0$. We will show
\begin{thm}\label{t:htrace}
For $n>2$
\begin{eqnarray*}\lefteqn{\sum_k h_0(r_{k,m})-(-1)^n\delta_{m,\sgn(m)}h_0(\tfrac{i}{2})}\\
&=& \frac{|m|^*\vol(\calF_\Gamma)}{(4\pi)^n}\int_\bbR h_0(r)r\tanh(\pi r)dr\\
&&+\sum_{\{\gamma\}\in \Gamma_{\mathrm{eh}}}\frac{\ell_{\gamma_0}\hat{h}_0(\ell_\gamma) }{2\sinh(\ell_\gamma/2)}H_m(\theta_\gamma)+\sum_{\{\gamma\}\in \Gamma_{\mathrm{e}}} \frac{\tilde{h}_0 (\theta_{\gamma,n},0)}{M_\gamma\sin(\theta_{\gamma,n}/2)}H_m(\theta_\gamma)\\
\end{eqnarray*}
where $\gamma_0$ is the (unique) primitive element for which $\gamma=\gamma_0^l$ with $l\in \bbN$, and $M_{\gamma}$ is the order of the centralizer of $\gamma$.

For $n=2$ there are additional terms coming from each cusp and the formula takes the form
\begin{eqnarray*}\lefteqn{\sum_k h_0(r_{k,m})-\delta_{|m|,1}h_0(\tfrac{i}{2})=
 \frac{|m|^*\vol(\calF_\Gamma)}{(4\pi)^2}\int_\bbR h_0(r)r\tanh(\pi r)dr}\\
&&+\sum_{\{\gamma\}\in \Gamma_{\mathrm{eh}}}\frac{\ell_{\gamma_0}\hat{h}_0(\ell_\gamma) }{2\sinh(\ell_\gamma/2)}H_m(\theta_\gamma)+\sum_{\{\gamma\}\in \Gamma_{\mathrm{e}}} \frac{\tilde{h}_0(\theta_{\gamma,2},0)}{M_\gamma\sin(\theta_{\gamma,2}/2)}H_m(\theta_{\gamma,1})\\
&&-\sum_{i=1}^\kappa\sum_{q\in \bbZ}R^{(i)}\hat{h}_0(2qR^{(i)})\exp(-2|q|R^{(i)}(|m|-\tfrac{1}{2}))
\end{eqnarray*}
where $R^{(i)}$ is the regulator of the $i$'th cusp.
\end{thm}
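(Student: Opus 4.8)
The plan is to prove a Selberg trace formula at a single fixed weight and then to telescope in the first $n-1$ weight coordinates so as to isolate $M_m(\Gamma)$. First I would fix $m\in\bbZ^n$ with $m_n=0$, choose for each coordinate a point pair invariant $f_{m_j}$ with Selberg transform $\calS_{m_j}f_{m_j}=h_j$ (taking $h_n=h_0$ in the last, weight-$0$, coordinate), and form the automorphic kernel $K_m(g,g')=\sum_{\gamma\in\Gamma}f_m(g,\gamma g')$ with $f_m=\prod_j f_{m_j}$. Since $\Gamma\bs G$ is non-compact the naive trace $\int_{\Gamma\bs G}K_m(g,g)\,dg$ diverges, so following Selberg and Hejhal I would truncate each cusp neighbourhood at height $Y$, expand the truncated integral both spectrally and geometrically, and pass to the limit $Y\to\infty$. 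On the spectral side the continuous part is handled by the Maass--Selberg relation, whose limit produces the scattering contribution through $\det\Phi_m$; here the weight dependence is fully explicit through the factor $P_m(s+\pi i\eta^{(i)}(q))$ of \eqref{e:phim} and Corollary \ref{c:scattaring}. On the geometric side the terms organize by the conjugacy types of Section 1 (identity, regular hyperbolic, elliptic, elliptic-hyperbolic, the remaining mixed classes, parabolic and hyperbolic-parabolic), with an orbital integral $I_{m_j}(\gamma_j;h)$ in each coordinate.

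Next I would replace the weight-$m$ formula by its iterated difference, over $j=1,\dots,n-1$, of the formulas at weight $m$ and weight $m-\sigma_je_j$ with $\sigma_j=\sgn(m_j)$, all built from the same transforms $h_j$. Spectrally, a principal- or complementary-series eigenvalue contributes equally at weights $m_j$ and $m_j-\sigma_j$ and cancels, while the discrete-series edge (eigenvalue $|m_j|(1-|m_j|)$, which the raising/lowering operator annihilates) has no counterpart and survives; iterating over all $j<n$ collapses the discrete spectral side to the trace over $M_m(\Gamma)$, namely $\sum_k h_0(r_{k,m})$, and the residues of the Eisenstein series at $s=1$ (present only when every $|m_j|=1$) supply the term $-(-1)^n\delta_{m,\sgn(m)}h_0(\tfrac i2)$ (respectively $-\delta_{|m|,1}h_0(\tfrac i2)$ when $n=2$). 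Geometrically I would invoke Lemma \ref{l:orbital}: since $I_{m_j}(a_\ell;h)$ is independent of $m_j$, any class that is hyperbolic in some coordinate $j<n$ has vanishing difference and drops out, which removes every regular hyperbolic class and, because $\prod_j|u_j|=1$ forces at least one hyperbolic coordinate $<n$, every hyperbolic-parabolic class as well. The elliptic difference $\frac{e^{im_j\theta_j}}{1-e^{i\sigma_j\theta_j}}h_j(i(|m_j|-\tfrac12))$ and the identity difference $\frac{|m_j|-\frac12}{2\pi}h_j(i(|m_j|-\tfrac12))$ of Lemma \ref{l:orbital} produce, respectively, the factors assembling into $H_m(\theta_\gamma)$ and into $\prod_{j<n}(2|m_j|-1)=|m|^*$. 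As both sides now carry the common factor $\prod_{j<n}h_j(i(|m_j|-\tfrac12))$ I would divide it out, after which only $h_0$ remains: the surviving classes are exactly the elliptic-hyperbolic ones (last coordinate $\frac{\hat h_0(\ell_\gamma)}{2\sinh(\ell_\gamma/2)}$ with primitive length $\ell_{\gamma_0}$) and the pure elliptic ones (last coordinate $\tilde h_0(\theta_{\gamma,n},0)/\sin(\theta_{\gamma,n}/2)$ with centralizer order $M_\gamma$), while the identity contribution combines with $I_0(1;h_0)=\frac1{4\pi}\int h_0(r)r\tanh(\pi r)\,dr$ and the covolume to give the stated main term.

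The remaining and most delicate step is the cusp analysis, where the continuous spectrum and the pure parabolic classes must be combined (the hyperbolic-parabolic classes have already disappeared, and in the truncation their contribution is controlled by Lemma \ref{l:Ihp}, which forces $I_m^{hp}(t;h)=0$ once $|t|$ exceeds half the support radius of $\hat h_0$). The pure parabolic contribution is governed by Lemma \ref{l:intp}, whose difference splits into the integral $\frac{2|m|-1}{8\pi}\int\frac{h(t)}{(|m|-\tfrac12)^2+t^2}\,dt$ and the point value $-\tfrac14 h(i\tfrac{2|m|-1}{2})$; summed over the cusp lattice $\calO^{(i)}$ and the unit group $U^{(i)}\cong\bbZ^{n-1}$ this produces oscillatory factors $\cos(2qR^{(i)}t)$ tied to the regulator $R^{(i)}$, which Lemma \ref{l:integralcomp1} (with $b=|m|-\tfrac12$ and $a=2qR^{(i)}$) resolves into closed form. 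The crux is to match this against the continuous-spectrum term: writing $\frac{d}{ds}\log\det\Phi_m=\frac{d}{ds}\log\det\Phi+\sum_i\frac{d}{ds}\log P_m(s+\pi i\eta^{(i)}(q))$, the spherical part behaves as in the $m=0$ non-uniform formula while the explicit $P_m$ part carries the weight dependence. I expect the \emph{main obstacle} to be precisely the bookkeeping showing that these cusp terms cancel completely when $n>2$ (recovering the uniform formula, consistent with the companion-lattice phenomenon of the introduction), whereas for $n=2$, where $U^{(i)}$ has rank one and the cancellation falls one dimension short, a residue survives; Lemma \ref{l:integralcomp1} is then exactly what packages this residue into the stated sum $-\sum_{i=1}^\kappa\sum_{q\in\bbZ}R^{(i)}\hat h_0(2qR^{(i)})\exp(-2|q|R^{(i)}(|m|-\tfrac12))$.
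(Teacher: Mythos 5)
Your outline follows essentially the same route as the paper's proof: auxiliary test functions $h_1,\dots,h_{n-1}$ in the first $n-1$ coordinates, truncation at the cusps, iterated differences over neighboring weights to collapse the discrete spectral side to $M_m(\Gamma)$ (this is Proposition \ref{p:Lambda}), and the observation that any geometric or scattering term whose weight dependence sits in one coordinate at a time dies under the alternating sum (Lemma \ref{l:cancelation}), leaving the identity, elliptic and elliptic--hyperbolic classes for $n>2$ and, for $n=2$, the surviving cusp terms that Lemma \ref{l:integralcomp1} packages into the stated sum after the parabolic and continuous integral terms cancel each other.

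The one step that would fail as written is your dismissal of the hyperbolic--parabolic classes. Their contribution to the truncated trace is \emph{not} a product of orbital integrals, so the weight-independence of $I_{m_j}(a_\ell;h_j)$ from Lemma \ref{l:orbital} does not dispose of them: after truncation one gets an $m$-independent logarithmically divergent piece plus the genuinely weight-dependent integrals $I^{hp}_{m_j}$ of \eqref{e:Ihp}. For $n>2$ these still cancel by Lemma \ref{l:cancelation}, but for $n=2$ the only coordinate below $n$ is precisely the one being differenced, and the surviving difference $I^{hp}_{m}(2qR^{(i)};h_1)-I^{hp}_{m-\sigma}(2qR^{(i)};h_1)$ does not vanish for free. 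It vanishes only because one imposes $\mathrm{supp}(\hat h_1)\subset(-\delta,\delta)$ with $\delta<\min_i R^{(i)}$ on the \emph{auxiliary} function and invokes Lemma \ref{l:Ihp}; you attribute the support condition to $\hat h_0$, whose support must be allowed to be arbitrarily large since the theorem is for general test functions (recovered by approximation). The same support hypothesis on $\hat h_1$ is also what legitimizes your use of Lemma \ref{l:integralcomp1} with $a=2qR^{(i)}$ for all $q\geq 1$. Once that hypothesis is placed where it belongs, the rest of your plan matches the paper's argument.
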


\begin{rem}
Using the identity
\[\frac{e^{im_j\theta_j}}{1-e^{i\theta_j}}+\frac{e^{-im_j\theta_j}}{1-e^{-i\theta_j}}=-\sum_{|k|<|m_j|}e^{ik\theta_j},\]
and taking the sum over all possible signs we get
\[\sum_{\sigma\in \{\pm1 \}^{n-1}} H_{\sigma m}(\theta)=(-1)^{n-1}\prod_{j=1}^{n-1}\big(\sum_{|k|<|m_j|}e^{ik\theta_j}\big).\]
We thus see that for $n>2$ our formula coincides with the formula for uniform lattices given in the introduction.
\end{rem}
\begin{rem}
By a suitable approximation argument (see e.g., \cite[Theorem 7.5]{Hejhal76}) it is enough to prove this formula for test functions $h\in \mathrm{PW}^w(\bbC)$ which is what we will assume from now on.
\end{rem}
\subsection{Setting up the trace formula}
We now review the setup needed for proving the hybrid trace formula.
In addition to our test function, $h_0$, we fix $n-1$ auxiliary functions $h_j\in PW^w(\bbC)$ for $j=1,\ldots,n-1$. We only require the auxiliary test functions  to satisfy that $h_j(i(|m_j|-\tfrac{1}{2}))\neq 0$ and that $\hat{h}_j$ is supported on $(-\delta,\delta)$ for some small fixed $\delta<\min_{i} R^{(i)}$ (the latter condition is used only when $n=2$ in order to simplify some computations).
To simplify notation we also set $h_n=h_0$ and consider the multivariable function $h(r)=h_1(r_1)\cdots h_n(r_n)$.

For $m\in \bbZ^{n}$ let $f_{m_j},\; j=1,\ldots, n$ denote a point-pair invariants of weight $m_j$ such that $S_{m_j}f_{m_j}=h_j$ and consider the multivariable functions $f_m(g,g')=\prod_j f_{m_j}(g_j,g_j')$.
We denote by $F_{m}$ the operator on $L^2(\Gamma\bs G,m)$ with kernel
$$F_m(g,g')=\sum_{\gamma\in \Gamma}f_m(g,\gamma g'),$$ that is
\begin{eqnarray*}
F_{m}\psi(g)&=&\int_{\calF_\Gamma} F_m(g,g')\psi(g')dg'\\
\end{eqnarray*}
where $\calF_\Gamma\subseteq G$ is a fundamental domain for $\Gamma\bs G$.

Let $E_m^{(i)}(g,s,q),\;i=1,\ldots,\kappa$ denote the Eisenstein series corresponding to each of the cusps, and consider the operator
$H_{m}$ on $L^2(\Gamma\bs G,m)$ given by the kernel
\begin{equation}\label{e:Hm}\end{equation}
\begin{eqnarray*}\lefteqn{H_{m}(g,g')=}\\
&&\!\!\!\!\!\!\!\!\frac{1}{2^{n+1}vR}\sum_{i=1}^\kappa\sum_{q\in \bbZ^{n-1}}\int_\bbR h(t+\pi\eta^{(i)}(q))E_m^{(i)}(g,\tfrac{1}{2}+it,q)E_m^{(i)}(g',\tfrac{1}{2}-it,-q)dt.
\end{eqnarray*}

The same argument as in the proof of \cite[Theorem 9.7]{Efrat87} shows that $F_{m}-H_{m}$ is of trace class and that
\begin{eqnarray*}\Tr(F_m-H_{m})&=&\int_{\calF_\Gamma} (F_m(g,g)-H_m(g,g))dg\\&=&\sum_{\lambda\in \Lambda(m)}d(\lambda,m)h(r_\lambda),\end{eqnarray*}
where $r_\lambda=(r_{\lambda_1},\ldots, r_{\lambda_n})$ with $\lambda_j=\frac{1}{4}+r_{\lambda_j}^2$.

For the hybrid trace formula we are only interested in the eigenvalues $\lambda$ for which $\lambda_j=|m_j|(1-|m_j|)$ for $j=1,\ldots, n-1$. More generally, for any subset $J\subseteq\{1,\ldots, n\}$ we define the corresponding set of eigenvalues
\[\Lambda(m,J)=\{\lambda\in \Lambda(m)|\forall j\in J,\; \lambda_j=|m_j|(1-|m_j|)\}.\]
We note that the eigenfunctions corresponding to eigenvalues in $\Lambda(m)\setminus\Lambda(m,J)$ can be obtained as lifts of eigenfunctions of lower weights. This was made precise in \cite[Corollary 2.1]{Kelmer10Holonomy} stating in particular
\begin{prop}\label{p:Lambda}
For $m\in \bbZ^n$, let $\sigma=\sgn(m)\in \{0,1,-1\}^n$ and $J_m=J_\sigma=\{j|m_j\neq 0\}$. Then for any function $\Psi$ on $\bbR^n$ for which the sum on the right absolutely converges we have
\begin{eqnarray*}\lefteqn{\sum_{\lambda\in \Lambda(m,J_m)}d(m,\lambda)\Psi(\lambda)+\delta_{m,\sigma}(-1)^{|J_m|}\Psi(0)}\\
&&=\mathop{\sum_{\nu\in\{0,1\}^n}}_{J_\nu\subseteq J_m} (-1)^{|J_\nu|}\sum_{\lambda\in \Lambda(m-\sigma\nu)}d(m-\sigma\nu,\lambda)\Psi(\lambda).\end{eqnarray*}
\end{prop}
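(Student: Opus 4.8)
The plan is to strip off the test function $\Psi$ and verify the identity coefficient by coefficient in $\Psi(\lambda)$. If $T_j$ denotes the shift $m\mapsto m-\sigma_j e_j$ on the weight lattice, then expanding the product $\prod_{j\in J_m}(1-T_j)$ exactly reproduces the alternating sum over $\nu$ with $\supp(\nu)\subseteq J_m$. After interchanging the (absolutely convergent) sums, it therefore suffices to show that for each fixed $\lambda\in\bbR^n$ the quantity
\[
S(\lambda):=\sum_{\nu:\,\supp(\nu)\subseteq J_m}(-1)^{|\nu|}d(m-\sigma\nu,\lambda)
\]
equals $d(m,\lambda)$ when $\lambda\in\Lambda(m,J_m)$ and $\lambda\neq 0$, equals $d(m,\lambda)+\delta_{m,\sigma}(-1)^{|J_m|}$ when $\lambda=0$, and vanishes when $\lambda\notin\Lambda(m,J_m)$. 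The three inputs I will use are all \emph{per-coordinate}, which is what makes the factorized difference operator $\prod_j(1-T_j)$ manageable.

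The first input is the ladder-operator isomorphism. Since $\calL_j^{\pm}$ commutes with every $\Omega_i$ and sends Eisenstein series to Eisenstein series, it preserves the continuous spectrum; being up to sign adjoint to $\calL_j^{\mp}$ it therefore also preserves the discrete spectrum, and so it maps the finite-dimensional joint eigenspace $V_\Gamma(m,\lambda)$ into $V_\Gamma(m\pm e_j,\lambda)$. The relation $\calL_j^\pm\calL_j^\mp=4(\pm m_j(1\mp m_j)-\lambda_j)$ shows that the operator moving $m_j$ toward $0$, namely $\calL_j^{-\sigma_j}$, and its partner $\calL_j^{\sigma_j}$ compose, in either order, to the scalar $4(|m_j|(1-|m_j|)-\lambda_j)$. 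Hence whenever $\lambda_j\neq|m_j|(1-|m_j|)$ these maps are mutually inverse up to a nonzero scalar, so $d(m,\lambda)=d(m-\sigma_j e_j,\lambda)$ and the factor $(1-T_j)$ annihilates $d(\cdot,\lambda)$ in that coordinate. This already disposes of every $\lambda\notin\Lambda(m,J_m)$: some $j\in J_m$ then has $\lambda_j\neq|m_j|(1-|m_j|)$, whence $S(\lambda)=0$.

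The second input is the spectral bound $\lambda_j\geq|m_j|(1-|m_j|)$ for any weight-$m_j$ form. It follows from the same composition identity together with the adjointness of $\calL_j^{\pm}$ (with the sign making $\langle\calL_j^{\sigma_j}\calL_j^{-\sigma_j}\psi,\psi\rangle=-\|\calL_j^{-\sigma_j}\psi\|^2\leq 0$). Because $k\mapsto|k|(1-|k|)$ is strictly decreasing in $|k|$, for $|m_j|\geq 2$ the value $|m_j|(1-|m_j|)$ lies strictly below the bottom $(|m_j|-1)(2-|m_j|)$ of the weight-$(m_j-\sigma_j)$ spectrum, so the shifted term $d(m-\sigma_j e_j,\lambda)$ vanishes. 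Thus, for $\lambda\in\Lambda(m,J_m)$, each factor $(1-T_j)$ with $|m_j|\geq 2$ acts as the identity, and $S(\lambda)$ collapses to the alternating sum $\sum_{A\subseteq J_m^{=1}}(-1)^{|A|}d(m^{(A)},\lambda)$, where $J_m^{=1}=\{j\in J_m:|m_j|=1\}$ and $m^{(A)}$ is $m$ with the coordinates in $A$ set to $0$.

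The third and most delicate input is a rigidity statement, and this is where irreducibility of $\Gamma$ genuinely enters and where I expect the real difficulty to lie. If $\psi\in L^2_d(\Gamma\bs G,m')$ has $m'_j=0$ and $\Omega_j\psi=0$, then $\calL_j^+\calL_j^-\psi=\calL_j^-\calL_j^+\psi=0$ forces $\calL_j^{\pm}\psi=0$, so $\psi$ is annihilated by all of $\Sl_2(\bbR)$ in the $j$-th factor and hence is $G_j$-invariant; since the projection of $\Gamma$ to the complementary factors is dense, the $G_j$-action on $\Gamma\bs G$ is mixing and the Howe--Moore theorem forces $\psi$ to be constant. Consequently $d(m',\lambda)=0$ whenever some coordinate has weight $0$ and eigenvalue $0$, \emph{unless} $m'=0$ and $\lambda=0$, in which case $d(0,0)=1$. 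For $j\in J_m^{=1}$ the shifted coordinate is precisely $(m'_j,\lambda_j)=(0,0)$, so every term with $A\neq\emptyset$ vanishes except when $m^{(A)}=0$ and $\lambda=0$; and $m^{(A)}=0$ occurs only for $A=J_m^{=1}$ together with $J_m^{\geq 2}=\emptyset$, i.e. exactly when $m=\sigma$. Hence $S(\lambda)=d(m,\lambda)$ in all cases, plus the single extra contribution $(-1)^{|J_m|}d(0,0)=(-1)^{|J_m|}$ precisely when $m=\sigma$ and $\lambda=0$. This is exactly the $\delta_{m,\sigma}(-1)^{|J_m|}\Psi(0)$ correction, and matching coefficients of $\Psi(\lambda)$ completes the proof. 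The bulk of the care is in this last paragraph: the isomorphism and the spectral bound are formal consequences of the commutation and adjointness relations, whereas identifying the constant function as the unique weight-$0$, eigenvalue-$0$ discrete datum, and tracking that it appears with multiplicity one in the single pattern $m=\sigma$, is the step that uses irreducibility and must be bookkept precisely.
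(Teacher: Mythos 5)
Your proof is correct and follows essentially the same route as the source: the paper does not prove this proposition itself but defers to \cite[Corollary 2.1]{Kelmer10Holonomy}, and the argument there is precisely your three-step scheme --- ladder-operator isomorphisms $V_\Gamma(m,\lambda)\cong V_\Gamma(m-\sigma_je_j,\lambda)$ away from $\lambda_j=|m_j|(1-|m_j|)$, the spectral bound $\lambda_j\geq|m_j|(1-|m_j|)$ to annihilate the shifted terms when $|m_j|\geq 2$, and irreducibility (Moore ergodicity) to identify the constant function as the unique weight-zero, eigenvalue-zero discrete datum, which produces the $\delta_{m,\sigma}(-1)^{|J_m|}\Psi(0)$ correction. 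I see no gaps.
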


In particular, when $J_m=\{1,\ldots, n-1\}$ we get
\begin{eqnarray}\label{e:altTr}
\lefteqn{\mathop{\sum_{\nu\in\{0,1\}^n}}_{J_\nu\subseteq J_m} (-1)^{|J_\nu|}\int_{\calF_\Gamma}(F_{m-\sigma\nu}(g,g)-H_{m-\sigma\nu}(g,g))dg=}\\
\nonumber&&=\mathop{\sum_{\nu\in\{0,1\}^n}}_{J_\nu\subseteq J_m} (-1)^{|J_\nu|}\sum_{\lambda\in \Lambda(m-\sigma\nu)}d(\lambda,m-\sigma\nu)h(r_\lambda)\\
\nonumber&&=\sum_{\lambda\in \Lambda(m,J_m)}d(m,\lambda)h(r_\lambda)+\delta_{m,\sigma}(-1)^{|J_m|}h(\tfrac{i}{2})\\
\nonumber&&=\left(\sum_k h_0(r_{k,m})-(-1)^n\delta_{m,\sigma}h_0(\tfrac{i}{2})\right)\left(\prod_{j=1}^{n-1}h_j(i\tfrac{|m_j|-1}{2})\right).
\end{eqnarray}
Note that, up to the factor $\prod_{j=1}^{n-1}h_j(i\tfrac{|m_j|-1}{2})$, the right hand side is exactly the spectral side of the hybrid trace formula.
In the following sections we compute the left hand side and show that it is given by the same factor times the geometric side.

\subsection{Truncated kernels}
In order to compute the geometric side of the formula we truncate the kernels as in \cite[Chapter III]{Efrat87} and then take a limit sending the cutoff parameter to infinity. To do this, let $Y_0$ be the function $Y_0(n_xa_tk)=e^{t_1+\cdots+ t_n}$ measuring the distance into the cusp at infinity.
Fix a large parameter $A>0$ and define the truncated Eisenstein series $E_{m,A}^{(i)}(g,s,q)$ given by
\[E_m^{(i)}(g,s,q)-\delta_{i,j}\vphi_{s,m}^{(j)}(\tau_jg,q)-\phi_m^{(i,j)}(s,q)\vphi^{(j)}_{1-s,-m}(\tau_jg,q),\]
if $Y_0(\tau_jg)>A$ and by $E_m^{(i)}(g,s,q)$ otherwise. We then define the truncated operator
$H_{m,A}$ by replacing $E_m^{(i)}(g,s,q)$ by $E_{m,A}^{(i)}(g,s,q)$ in \eqref{e:Hm}.

We also truncate the operator $F_m$ replacing it by the truncated kernel
\begin{eqnarray*}\lefteqn{F_{m,A}(g,g')=}\\
&&F_m(g,g')-\sum_i \id_A(Y_0(\tau_i g))\frac{1}{v^{(i)}}\sum_{q\in \bbZ^{d-1}}\int_{\bbR^n}f_m(g,\begin{pmatrix}u_q^{(i)} & x\\ 0& u_{-q}^{(i)}\end{pmatrix}g')dx,\end{eqnarray*}
where we use a smooth cutoff function $\id_A(t)=\left\{\begin{array}{cc} 1 & t>A\\ 0 & t<A-1\end{array}\right.$.

We then have that
\[\int_{\calF_\Gamma} (F_m(g,g)-H_m(g,g))dg=\lim_{A\to\infty}\int_{\calF_\Gamma}(F_{m,A}(g,g)-H_{m,A}(g,g))dg.\]
For fixed $A$, the integrals $\int_{\calF_\Gamma}F_{m,A}(g,g)dg$ and $\int_{\calF_\Gamma}H_{m,A}(g,g)dg$ converge; while both integrals blow up logarithmically as $A\to\infty$, the logarithmic terms in $\int_{\calF_\Gamma}F_{m,A}(g,g)dg$ and $\int_{\calF_\Gamma}H_{m,A}(g,g)dg$ cancel out, and the limit of the difference can be computed explicitly in terms of the test function $h$.

\subsection{Alternating sums}
We will take a slightly different approach, more suitable for computing the alternating sum
\[\mathop{\sum_{\nu\in\{0,1\}^n}}_{J_\nu\subseteq J_m} (-1)^{|J_\nu|}\int_{\calF_\Gamma}(F_{m-\sigma\nu}(g,g)-H_{m-\sigma\nu}(g,g))dg.\]
Instead of computing the difference  $\int_{\calF_\Gamma}(F_{m,A}(g,g)-H_{m,A}(g,g))dg$ and taking $A\to\infty$ we will compute separately each of the alternating sums
\begin{equation}\label{e:altFm}\mathop{\sum_{\nu\in\{0,1\}^n}}_{J_\nu\subseteq J_m} (-1)^{|J_\nu|}\int_{\calF_\Gamma}F_{m-\sigma\nu,A}(g,g)dg,
\end{equation}
and
\begin{equation}\label{e:altHm}
\mathop{\sum_{\nu\in\{0,1\}^n}}_{J_\nu\subseteq J_m} (-1)^{|J_\nu|}\int_{\calF_\Gamma}H_{m-\sigma\nu,A}(g,g))dg.\end{equation}
We will see that, in each of these sums, the logarithmic terms cancel out and the limit as $A\to\infty$ can be computed explicitly.
Moreover, in addition to the logarithmic blowup, many terms appearing in the general trace formula also cancel out, saving quite a bit of computations.
Most of this cancelation follows from the following combinatorial observation.
\begin{lem}\label{l:cancelation}
Let $\calC:\bbZ^n\to \bbC$ be any function satisfying that for some $1\leq j\leq n$, the differences
$\calC(m)-\calC(m-e_j)$ depend only on $m_j$. Then for any $m\in \bbZ^n$ with $\{j\}\subsetneq J_m$ we have
\[\mathop{\sum_{\nu\in\{0,1\}^n}}_{J_\nu\subseteq J_m} (-1)^{|J_\nu|}\calC(m-\sgn(m)\nu)=0.\]
\end{lem}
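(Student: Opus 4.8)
The plan is to reindex the alternating sum by subsets of $J_m$ and then exploit a telescoping pairing along the distinguished coordinate $j$. Since the constraint $J_\nu\subseteq J_m$ forces $\nu$ to be supported on $J_m$, writing $\nu=\mathbf 1_S$ identifies the sum with a sum over subsets $S\subseteq J_m$, the term attached to $S$ being $(-1)^{|S|}\calC(m-\sgn(m)\mathbf 1_S)$. Writing $\sigma=\sgn(m)$, I would group these subsets into the disjoint pairs $\{T,\,T\cup\{j\}\}$ indexed by $T\subseteq J_m\setminus\{j\}$; this is legitimate precisely because $j\in J_m$.

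For a fixed such $T$, set $m'=m-\sigma\mathbf 1_T$; since $j\notin T$ we have $m'_j=m_j$, and the term for $S=T\cup\{j\}$ is $\calC(m'-\sigma_j e_j)$ because $\sigma\mathbf 1_{T\cup\{j\}}=\sigma\mathbf 1_T+\sigma_j e_j$. The two terms of the pair therefore combine to $(-1)^{|T|}\big(\calC(m')-\calC(m'-\sigma_j e_j)\big)$, so the key step is to show that this inner difference is a constant independent of $T$. Here I would split on the sign $\sigma_j\in\{\pm1\}$: when $\sigma_j=1$ the difference is exactly $\calC(m')-\calC(m'-e_j)$, which by hypothesis depends only on $m'_j=m_j$; when $\sigma_j=-1$ it equals $-\big(\calC(m'+e_j)-\calC(m')\big)$, which by hypothesis depends only on $(m'+e_j)_j=m_j+1$. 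In either case the difference equals a constant $c=c(m_j)$ that does not depend on $T$. This sign bookkeeping is the only genuinely delicate point, and it is exactly where the hypothesis that $\calC(m)-\calC(m-e_j)$ \emph{depends only on $m_j$} is used.

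Summing over the pairs then gives $c\sum_{T\subseteq J_m\setminus\{j\}}(-1)^{|T|}$. Because $\{j\}\subsetneq J_m$, the set $J_m\setminus\{j\}$ is nonempty, and for any nonempty finite set the signed count of subsets vanishes, $\sum_{T}(-1)^{|T|}=(1-1)^{|J_m\setminus\{j\}|}=0$. Hence the whole alternating sum is $0$, as claimed. I expect no analytic obstacle: the argument is entirely combinatorial, and the only care needed is in confirming that the pairing respects the constraint $J_\nu\subseteq J_m$ and that the inner difference is genuinely $T$-independent in both sign cases.
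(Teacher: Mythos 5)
Your proof is correct and follows essentially the same route as the paper's: pair each subset $T\subseteq J_m\setminus\{j\}$ with $T\cup\{j\}$, use the hypothesis to see that each paired difference is a constant $c(m_j)$, and conclude because the signed count of subsets of the nonempty set $J_m\setminus\{j\}$ vanishes. The only difference is cosmetic — you spell out the $\sigma_j=-1$ sign case that the paper dismisses as ``analogous.''
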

\begin{proof}
For simplicity assume that $m_{j}>0$ (the argument for $m_j<0$ is analogous). Let $\sigma=\sgn(m)\in \{0,1,-1\}^n$.
We can separate the subsets of $J_m$ as a disjoint union of two sets
\[\{J_\nu\subseteq J_m|j\in J_\nu\}\cup \{J_\nu\subseteq J_m|j\not\in J_\nu\},\]
each having the same number of elements. We can correspondingly rewrite the alternating sum as
\begin{eqnarray*}\sum_{J_\nu\subseteq J_m} (-1)^{|J_\nu|}\calC(m-\sigma\nu)
&=& \mathop{\sum_{J_\nu\subseteq J_m}}_{j\not\in J_\nu} (-1)^{|J_\nu|}(\calC(m-\sigma\nu)-\calC(m-\sigma\nu-e_{j}))\\
&=&\mathop{\sum_{J_\nu\subseteq J_m}}_{j\not\in J_\nu} (-1)^{|J_\nu|}c(m_j)\\
&=& c(m_j)\mathop{\sum_{J_\nu\subseteq J_m}}_{j\not\in J_\nu} (-1)^{|J_\nu|}
\end{eqnarray*}
Since we assumed that $|J_m|\geq 2$, there is an even number of subsets $\{J_\nu\subseteq J_m|j\not\in J_\nu\}$, half of them with even cardinality and half with odd cardinality hence $\sum_{j\not\in J_\nu\subseteq J_m} (-1)^{|J_\nu|}=0$.

\end{proof}

\subsection{Continuous contribution}
Following the same arguments as in \cite[Chapter III, Proposition 1.1]{Efrat87} we have
 \begin{prop}\label{p:cont}
For any fixed $m\in \bbZ^n$ as $A\to \infty$
\begin{eqnarray*}
\lefteqn{\int_{\calF_\Gamma} H_{m,A}(g,g)dg=}\\
&&2^{n-1}\log(A)\sum_{i=1}^\kappa R^{(i)}\sum_{q\in\bbZ^{n-1}}\hat{h}(2\log(u_q^{(i)}))+\frac{h(0)}{4}\Tr(\Phi_m(\tfrac{1}{2},0))\\
&&\!\!\!\!\!\!-\frac{1}{4\pi}\sum_{i=1}^\kappa \sum_{q\in \bbZ^{n-1}}\int_{\bbR}h(t+\pi\eta^{(i)}(q))
\Re \left[\calC^{(i)}(m)\right]dt+o(1).
\end{eqnarray*}
where
\[\calC^{(i)}(m)=\sum_{j=1}^\kappa {\phi_m^{(i,j)}}'(\tfrac{1}{2}+it,q)\phi^{(i,j)}_m(\tfrac{1}{2}-it,-q).\]
\end{prop}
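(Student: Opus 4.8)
The plan is to follow the spherical computation in \cite[Chapter III, Proposition 1.1]{Efrat87} and adapt it to weight $m$, the only genuinely new analytic input being the factorization $\phi_m^{(i,j)}(s,q)=\phi^{(i,j)}(s,q)P_m(s+\pi i\eta^{(i)}(q))$ from \eqref{e:phim} together with the elementary observation that $|P_m(\tfrac12+it)|=1$ for all real $t$ (so that $\Phi_m(\tfrac12+it,q)$ remains unitary, consistent with Corollary \ref{c:scattaring}). First I would insert the kernel \eqref{e:Hm}, with $E_m^{(i)}$ replaced by the truncated $E_{m,A}^{(i)}$, into $\int_{\calF_\Gamma}H_{m,A}(g,g)\,dg$ and interchange the $g$-integration with the $t$-integral and the finite $i$-sum and the $q$-sum; this is justified by the rapid decay of the truncated Eisenstein series in every cusp. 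The computation then reduces to the hybrid Maass--Selberg relation, namely the evaluation of
\[\int_{\calF_\Gamma}E_{m,A}^{(i)}(g,\tfrac12+it,q)\,E_{m,A}^{(i)}(g,\tfrac12-it,-q)\,dg.\]
On the diagonal the two weight factors cancel, so the $\SO(2)^n$-integration only contributes a fixed constant and the problem becomes one of integrating the shifted functions $\vphi_{s_j,m_j}$ and their scattered images against each other over the cuspidal zones $\{Y_0(\tau_\ell g)>A\}$.

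Second I would carry out this Maass--Selberg computation in coordinates adapted to the cusp geometry, splitting $\bbR^n=\bbR(1,\dots,1)\oplus\{t:\sum_j t_j=0\}$, where the first summand is the depth direction recorded by $Y_0$ and $\log U^{(i)}$ is a full lattice in the orthogonal hyperplane whose covolume is governed by $R^{(i)}$. Since $\Re(s+\pi i\eta_j^{(i)}(q))=\tfrac12$ on the critical line, the leading constant term $\delta_{i,\ell}\vphi_{s,m}^{(\ell)}$ has, after the Haar factor $e^{-\sum_j t_j}$, a depth-integrand that is constant in the depth variable; integrating it up to $Y_0=A$ is precisely what produces the logarithmic divergence, while the scattered pieces $\phi_m^{(i,j)}\vphi_{1-s,m}$ yield either finite contributions or oscillatory factors $A^{\pm 2it(\cdots)}$. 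I expect that, after carrying out $\int_\bbR h(t+\pi\eta^{(i)}(q))\,dt$ along the critical line and applying Poisson summation in the duality between the unit lattice $U^{(i)}$ and the lattice of characters indexed by $q\in\bbZ^{n-1}$ (this duality being exactly the content of the defining relation $(s+\pi i\eta^{(i)}(q))D^{(i)}=(s,\pi iq)$), the divergent contribution collapses to $2^{n-1}\log(A)\sum_{i=1}^\kappa R^{(i)}\sum_{q}\hat h(2\log u_q^{(i)})$, with the factor $R^{(i)}$ coming from the cross-section covolume and the $2^{n-1}$ and the overall normalization emerging from the choice $v^{(i)}R^{(i)}=vR$ built into \eqref{e:Hm}. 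Because $|P_m|\equiv1$ on the critical line, this coefficient is identical to the spherical one and all weight dependence is pushed into the finite scattering terms.

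Third I would collect the finite terms. The oscillatory pieces carry factors $A^{\pm i\alpha}$ with $\alpha\neq0$, and when integrated against the smooth compactly supported $\hat h$ they tend to $0$ as $A\to\infty$ by Riemann--Lebesgue, accounting for the $o(1)$. Resolving the indeterminate resonant term $\tfrac{A^{s+s'-1}-\phi_m\phi_m A^{1-s-s'}}{s+s'-1}$ at the symmetry point $s'=1-s$ contributes, at $q=0$ and $t=0$, the value $\tfrac{h(0)}{4}\Tr(\Phi_m(\tfrac12,0))$, since $\tfrac12$ is the fixed point of $s\mapsto 1-s$. Finally the off-resonant Maass--Selberg term, combined with the functional equation $\Phi_m(s,q)\Phi_m(1-s,-q)=I$ to extract the logarithmic derivative of the scattering matrix, produces the term $-\tfrac{1}{4\pi}\sum_{i}\sum_q\int_\bbR h(t+\pi\eta^{(i)}(q))\Re[\calC^{(i)}(m)]\,dt$ with $\calC^{(i)}(m)=\sum_j {\phi_m^{(i,j)}}'(\tfrac12+it,q)\phi_m^{(i,j)}(\tfrac12-it,-q)$, the real part appearing because the two conjugate Maass--Selberg terms $A^{s-s'}$ and $A^{s'-s}$ combine. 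The main obstacle is the second step: getting the hybrid cusp cross-section geometry exactly right, so that the depth-integral summed over $U^{(i)}$ and over $q$ yields precisely the coefficient $2^{n-1}R^{(i)}$ and the Fourier-transform evaluation $\hat h(2\log u_q^{(i)})$; by contrast the passage from weight $0$ to weight $m$ is routine once one notes $|P_m(\tfrac12+it)|=1$, which confines all weight dependence to $\calC^{(i)}(m)$.
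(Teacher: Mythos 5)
Your proposal follows exactly the route the paper takes: the paper's proof of Proposition \ref{p:cont} is simply to run the spherical Maass--Selberg computation of \cite[Chapter III, Proposition 1.1]{Efrat87} with the weight-$m$ Eisenstein series, the only new ingredient being the factorization $\phi_m^{(i,j)}(s,q)=\phi^{(i,j)}(s,q)P_m(s+\pi i\eta^{(i)}(q))$ and the fact that $|P_m(\tfrac12+it)|=1$, which is precisely your plan. Your identification of where each term originates (the $\log A$ divergence from the leading constant term, $\tfrac{h(0)}{4}\Tr(\Phi_m(\tfrac12,0))$ from the resonant point, $\Re[\calC^{(i)}(m)]$ from the scattering derivative, and $o(1)$ from the oscillatory pieces) is consistent with the cited argument.
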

The first term, involving  $\log(A)$, does not depend on $m$ and hence cancels out when taking the alternating sum.
Also, since $P_{m}(\tfrac{1}{2})=1$ we have that $\Phi_m(\tfrac{1}{2},0)=\Phi_0(\tfrac{1}{2},0)$, and hence the only dependence on $m$ is in the $\calC^{(i)}(m)$ terms. Since
$P_m(\tfrac{1}{2}+it)P_m(\tfrac{1}{2}-it)=1$ we have
\begin{eqnarray*}
\lefteqn{{\phi_m^{(i,j)}}'(\tfrac{1}{2}+it,q)\phi^{(i,j)}_m(\tfrac{1}{2}-it,-q)= {\phi^{(i,j)}}'(\tfrac{1}{2}+it,q)\phi^{(i,j)}(\tfrac{1}{2}-it,-q)}\\
&&+\frac{P_m'}{P_m}(\tfrac{1}{2}+i(\pi\eta^{(i)}(q)+t))\phi^{(i,j)}(\tfrac{1}{2}+it,q)\phi^{(i,j)}(\tfrac{1}{2}-it,-q).
\end{eqnarray*}
The first term does not depend on $m$ and for the second term,
from unitarity of $\Phi(\frac{1}{2}+it,q)$ and the fact that $\overline{\phi^{(i,j)}(s,q)}=\phi^{(i,j)}(\bar{s},-q)$ we get that
\[\sum_{j=1}^\kappa\phi^{(i,j)}(\tfrac{1}{2}+it,q)\phi^{(i,j)}(\tfrac{1}{2}-it,-q)=1.\]
We thus see that for $m_j>0$  (respectively $m_j<0$)
\begin{eqnarray*}\lefteqn{\calC^{(i)}(m)-\calC^{(i)}(m\mp e_j)}\\
&&=\frac{P_m'}{P_m}(\tfrac{1}{2}+i(\pi\eta^{(i)}(q)+t))-\frac{P_{m\mp e_j}'}{P_{m\mp e_j}}(\tfrac{1}{2}+i(\pi\eta^{(i)}(q)+t))\\
&&= \frac{1-2|m_j|}{(|m_j|-\tfrac{1}{2})^2+(\pi \eta^{(i)}(q)+t)^2}\end{eqnarray*}
depends only on $m_j$.

Hence, by Lemma \ref{l:cancelation}, when $n>2$ there is no contribution from the continuous spectrum and for $n=2$ we have
\begin{eqnarray*}\lefteqn{\int_{\calF_\Gamma}(H_{(m,0),A}(g,g)-H_{(m-1,0),A}(g,g))dg=}\\
&&\frac{1}{4\pi}\sum_{i=1}^\kappa \sum_{q\in \bbZ}\int_{\bbR}h(t+\pi\eta^{(i)}(q))
\frac{2|m|-1}{(|m|-\tfrac{1}{2})^2+(t+\pi \eta^{(i)}_1)^2}dt+o(1)
\end{eqnarray*}
In this case the groups $U^{(i)}$ are cyclic generated by some $\epsilon^{(i)}$ satisfying that $|\epsilon_1^{(i)}\epsilon_2^{(i)}|=1$.
Hence $D^{(i)}=\begin{pmatrix} \tfrac{1}{2}& \log|\epsilon^{(i)}|\\ \tfrac{1}{2} & -\log|\epsilon^{(i)}|\end{pmatrix}$, $R^{(i)}=|\log|\epsilon^{(i)}||$, and
$\eta^{(i)}(q)=(\frac{q}{2R^{(i)}},-\frac{q}{2R^{(i)}})$. We can thus rewrite the above term as
\begin{equation}\label{e:cont1}
\frac{1}{4\pi}\sum_{i=1}^\kappa \sum_{q\in \bbZ}\int_{\bbR}h_0(t+\frac{\pi q}{2R^{(i)}}) h_1(t-\frac{\pi q}{2R^{(i)}})
\frac{2|m|-1}{(|m|-\tfrac{1}{2})^2+(t-\frac{\pi q}{2R^{(i)}})^2}dt
\end{equation}
 Making a change of variables $t\mapsto t+\frac{\pi q}{2R^{(i)}}$ and changing the order of summation and integration \eqref{e:cont1}
 becomes
 \begin{equation}\label{e:cont2}
\frac{2|m|-1}{4\pi}\sum_{i=1}^\kappa \int_{\bbR}\left(\sum_{q\in \bbZ}h_0(t+\frac{\pi q}{R^{(i)}})\right)
\frac{ h_1(t)}{(|m|-\tfrac{1}{2})^2+t^2}dt
\end{equation}
Using Poisson summation we replace
\[\sum_{q\in \bbZ}h_0(t+\frac{\pi q}{R^{(i)}})=2R^{(i)}\sum_{q\in \bbZ}\hat{h}_0(2qR^{(i)})e^{2iqR^{(i)}t},\]
and \eqref{e:cont2} becomes
 \begin{eqnarray*}
\lefteqn{\frac{2|m|-1}{4\pi}\sum_{i=1}^\kappa 2R^{(i)}\sum_{q\in \bbZ}\hat{h}_0(2qR^{(i)})\int_{\bbR}
\frac{h_1(t)e^{2iqR^{(i)}t}}{(|m|-\tfrac{1}{2})^2+t^2}dt}\\
&=&\frac{2|m|-1}{4\pi}\sum_{i=1}^\kappa 2R^{(i)}\hat{h}_0(0)\int_{\bbR} \frac{h_1(t)}{(|m|-\tfrac{1}{2})^2+t^2}dt\\
&&+\sum_{i=1}^\kappa 2R^{(i)}\sum_{q\in \bbN}\hat{h}_0(2qR^{(i)})\frac{|m|-\tfrac{1}{2}}{\pi}\int_{\bbR}\frac{h_1(t)\cos(2qR^{(i)}t)}{(|m|-\tfrac{1}{2})^2+t^2}dt
\end{eqnarray*}
Finally, using the assumption on the support of $\hat{h}_1$ we can use Lemma \ref{l:integralcomp1} to replace
\[\frac{|m|-\tfrac{1}{2}}{\pi}\int_{\bbR} \frac{h_1(t)\cos(2qR^{(i)}t)}{(|m|-\tfrac{1}{2})^2+t^2}dt=e^{-qR^{(i)}(2|m|-1)}h_1(i\tfrac{2|m|-1}{2}).\]
We can thus conclude
\begin{prop}\label{p:altcont}
For $n>2$, as $A\to  \infty$,
$$\sum_{J_\nu\subseteq J_m}(-1)^{|J_\nu|}\int_{\calF_\Gamma} H_{m,A}(g,g)dg\to 0,$$
and for $n=2$
\begin{eqnarray*}
\lefteqn{\int_{\calF_\Gamma}(H_{(m,0),A}(g,g)-H_{(m-1,0),A}(g,g))dg=}\\
&&\left(\sum_{i=1}^\kappa R^{(i)}\right)\frac{|m|-\tfrac{1}{2}}{\pi}\hat{h}_0(0)\int_{\bbR}\frac{ h_1(t)}{(|m|-\tfrac{1}{2})^2+t^2}dt\\
&&+\sum_{i=1}^\kappa 2R^{(i)}\sum_{q\in \bbN}\hat{h}_0(2qR^{(i)})e^{-2qR^{(i)}(|m|-\tfrac{1}{2})}h_1(i\tfrac{2|m|-1}{2}))+o(1)
\end{eqnarray*}
\end{prop}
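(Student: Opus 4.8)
The plan is to substitute the asymptotic expansion of Proposition \ref{p:cont} for $\int_{\calF_\Gamma}H_{m,A}(g,g)dg$ into the alternating sum \eqref{e:altHm} and extract the $m$-dependence. First I would isolate the two manifestly $m$-independent pieces: the leading term $2^{n-1}\log(A)\sum_i R^{(i)}\sum_q\hat h(2\log(u_q^{(i)}))$, and the term $\tfrac{h(0)}{4}\Tr(\Phi_m(\tfrac12,0))$, which is constant in $m$ because $P_m(\tfrac12)=1$ forces $\Phi_m(\tfrac12,0)=\Phi_0(\tfrac12,0)$. Since $\sum_{J_\nu\subseteq J_m}(-1)^{|J_\nu|}=0$ whenever $J_m\neq\emptyset$, both of these drop out of the alternating sum identically in $A$, so the entire surviving $m$-dependence is carried by the integrals against $\calC^{(i)}(m)$.

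Next I would process $\calC^{(i)}(m)$. Writing $\phi_m^{(i,j)}=\phi^{(i,j)}P_m$ and using $P_m(\tfrac12+it)P_m(\tfrac12-it)=1$, the summand ${\phi_m^{(i,j)}}'(\tfrac12+it,q)\phi_m^{(i,j)}(\tfrac12-it,-q)$ decomposes into an $m$-independent term plus $\tfrac{P_m'}{P_m}(\tfrac12+i(\pi\eta^{(i)}(q)+t))$ times $\phi^{(i,j)}(\tfrac12+it,q)\phi^{(i,j)}(\tfrac12-it,-q)$. Summing the latter over $j$ and invoking the unitarity of $\Phi(\tfrac12+it,q)$ together with $\overline{\phi^{(i,j)}(s,q)}=\phi^{(i,j)}(\bar s,-q)$ collapses that sum to $1$. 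A direct differentiation of \eqref{e:Pm} then gives, for each $j$ with $m_j\neq0$, that $\calC^{(i)}(m)-\calC^{(i)}(m\mp e_j)$ equals $\tfrac{1-2|m_j|}{(|m_j|-\frac12)^2+(\pi\eta^{(i)}(q)+t)^2}$, a function of $m$ only through $m_j$. Integrating against the fixed factor $h(t+\pi\eta^{(i)}(q))$ and summing over the cusps $i$ and over $q$ produces a quantity that still depends on $m$ only through $m_j$, so the $m$-dependent part $T(m)$ of $\int_{\calF_\Gamma}H_{m,A}(g,g)dg$ satisfies the hypothesis of Lemma \ref{l:cancelation} for every such $j$.

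For $n>2$ the index set is $J_m=\{1,\dots,n-1\}$, which has at least two elements, so I may pick $j\in J_m$ with $\{j\}\subsetneq J_m$; Lemma \ref{l:cancelation} then annihilates the alternating sum of $T$ termwise, while the alternating sum of the finitely many $o(1)$ remainders is again $o(1)$, and letting $A\to\infty$ yields the first assertion. For $n=2$ the lemma is unavailable (here $J_m=\{1\}$), so I would evaluate the single surviving integral directly. Specializing the cusp data—each $U^{(i)}$ is cyclic with generator $\epsilon^{(i)}$, so $R^{(i)}=|\log|\epsilon^{(i)}||$ and $\eta^{(i)}(q)=(\tfrac{q}{2R^{(i)}},-\tfrac{q}{2R^{(i)}})$—I substitute the difference formula to obtain \eqref{e:cont1}, shift $t\mapsto t+\tfrac{\pi q}{2R^{(i)}}$ to reach \eqref{e:cont2}, and apply Poisson summation to replace $\sum_q h_0(t+\tfrac{\pi q}{R^{(i)}})$ by $2R^{(i)}\sum_q\hat h_0(2qR^{(i)})e^{2iqR^{(i)}t}$. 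Separating the $q=0$ term and pairing $\pm q$ turns the remaining integrals into cosine integrals, which Lemma \ref{l:integralcomp1} evaluates as $e^{-2qR^{(i)}(|m|-\frac12)}h_1(i\tfrac{2|m|-1}{2})$; this application is legitimate since $\supp\hat h_1\subseteq(-\delta,\delta)$ with $\delta<R^{(i)}\le 2qR^{(i)}$ for every $q\ge1$. Collecting the constant and exponential contributions gives the stated $n=2$ formula.

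I expect the main obstacle to be bookkeeping and analytic justification rather than any conceptual difficulty. The two delicate points are verifying that the fully cusp- and $q$-summed difference genuinely depends on $m$ only through the single coordinate $m_j$—so that Lemma \ref{l:cancelation} truly applies—and, in the $n=2$ case, justifying the interchange of the $q$-summation with the $t$-integration and the Poisson summation step while passing to the limit $A\to\infty$; these require the decay of $h_0$ and the compact support of $\hat h_1$ to control the tails uniformly, together with careful tracking of the $o(1)$ remainders through the finite alternating combination.
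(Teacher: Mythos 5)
Your proposal is correct and follows essentially the same route as the paper: isolate the $m$-independent terms in Proposition \ref{p:cont}, reduce the $m$-dependence to the difference $\calC^{(i)}(m)-\calC^{(i)}(m\mp e_j)$ via $P_m(\tfrac12+it)P_m(\tfrac12-it)=1$ and unitarity of the scattering matrix, kill the alternating sum with Lemma \ref{l:cancelation} for $n>2$, and for $n=2$ evaluate directly through the change of variables, Poisson summation, and Lemma \ref{l:integralcomp1}. The only minor addition is your explicit attention to interchanging sums and integrals and tracking the $o(1)$ remainders, which the paper leaves implicit.
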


\subsection{Compact contribution}
Instead of using the truncated kernel $F_{m,A}$ to evaluate \eqref{e:altFm}, it is convenient to use the original kernel, $F_m$, but integrate over a truncated fundamental domain. Just as for the $m=0$ case (see \cite[page 86]{Efrat87}) as $A\to\infty$
\[\int_{\calF_\Gamma}F_{m,A}(g,g)dg=\int_{\calF_A} F_{m}(g,g)dg+o(1),\]
where
\begin{equation}\calF_{A}=\{g\in \calF_\Gamma| Y_0(\tau_jg)<A,\; j=1,\ldots,\kappa\}\end{equation}
To compute the term on the right expand $F_m$ as a sum over all lattice elements and collect together $\Gamma$-conjugacy classes to get
\begin{eqnarray*}
\int_{\calF_A} F_{m}(g,g)dg&=&\sum_{\gamma\in \Gamma}\int_{\calF_A} f_{m}(g,\gamma g)dg\\
&=& \sum_{\{\gamma\}}\int_{\calF_{\gamma,A}}f_m(g,\gamma g)dg,
\end{eqnarray*}
where the last sum is over all $\Gamma$-conjugacy classes, and
\[\calF_{\gamma,A}=\cup_{\tau\in\Gamma/\Gamma_{\gamma}}\tau\calF_{A}.\]

We now describe the contribution of the regular conjugacy classes, that is, the trivial class and the elliptic, mixed and hyperbolic conjugacy classes that do not have a parabolic fixed point (the contribution of parabolic classes and hyperbolic-parabolic classes will be dealt it in the following sections).

For the regular classes the orbital integrals converge and we have
\[\lim_{A\to\infty}\int_{\calF_{\gamma,A}}f_m(g,\gamma g)dg=\int_{\calF_{\gamma}}f_m(g,\gamma g)dg=\vol(\Gamma_\gamma\bs G_\gamma) I_{m}(\gamma;h),\]
where $I_m(\gamma;h)=\prod_j I_{m_j}(\gamma_j;h_j)$ with $I_{m_j}(\gamma_j;h_j)$ the orbital integral given in \eqref{e:orbital}.
Using Lemma \ref{l:orbital} for these orbital integrals and taking the alternating sum over all neighboring weights we see that
for the trivial element
\begin{eqnarray*}\lefteqn{\mathop{\sum_{\nu\in\{0,1\}^n}}_{J_\nu\subseteq J_m} (-1)^{|J_\nu|}I_{m-\sigma\nu}(1;h)=}\\
&&\frac{|m|^*}{(4\pi)^n}\int_\bbR h_0(r)r\tanh(\pi r)dr\left(\prod_{j=1}^{n-1} h_j(\tfrac{2|m_j|-1}{2}i)\right).
\end{eqnarray*}
For the mixed classes (and regular hyperbolic classes) the alternating sum vanishes unless $\gamma$ is elliptic-hyperbolic in which case
\begin{eqnarray*}\lefteqn{\mathop{\sum_{\nu\in\{0,1\}^n}}_{J_\nu\subseteq J_m} (-1)^{|J_\nu|}I_{m-\sigma\nu}(\gamma;h)=}\\
&&\frac{\hat{h}_0(\ell_\gamma)}{2\sinh(\ell_\gamma/2)}H_m(\theta_\gamma)\left(\prod_{j=1}^{n-1}h_j(\tfrac{|m_j|-1}{2}i)\right),\end{eqnarray*}
or elliptic, in which case
\begin{eqnarray*}\lefteqn{\mathop{\sum_{\nu\in\{0,1\}^n}}_{J_\nu\subseteq J_m} (-1)^{|J_\nu|}I_{m-\sigma\nu}(\gamma;h)=}\\
&&\frac{\tilde{h}_0(\theta_{\gamma,n},0)}{\sin(\theta_{\gamma,n}/2)}\left(\prod_{j=1}^{n-1}  \frac{e^{im_j\theta_{\gamma,j}}}{1-e^{i\sigma_j\theta_{\gamma,j}}}\right)\left(\prod_{j=1}^{n-1} h_j(i\tfrac{2|m_j|-1}{2})\right).\end{eqnarray*}
We also note that for $\gamma$ elliptic-hyperbolic, $\Gamma_\gamma$ is cyclic generated by some primitive element $\gamma_0$ and $\vol(\Gamma_\gamma\bs G_\gamma)=\ell_{\gamma_0}$. For $\gamma$ elliptic, $\Gamma_\gamma$ is a finite group of order $M_\gamma$ and $\vol(\Gamma_\gamma\bs G_\gamma)=\frac{1}{M_{\gamma}}$.

To conclude, we have shown that
\begin{prop}
The contribution of the regular conjugacy classes to \eqref{e:altFm} is given by
\begin{eqnarray*}
\bigg(\frac{|m|^*\vol(\Gamma\bs G)}{(4\pi)^n}\int_\bbR h_0(r)r\tanh(\pi r)dr
+\sum_{\{\gamma\}\in \Gamma_{\mathrm{eh}}}\frac{\ell_{\gamma_0}\hat{h}_0(\ell_\gamma)}{2\sinh(\ell_\gamma/2)}H_m(\theta_\gamma)\\
+\sum_{\{\gamma\}\in\Gamma_{\mathrm{e}}}\frac{\tilde{h}_0(\theta_{\gamma,n},0)}{M_\gamma\sin(\theta_{\gamma,n}/2)}H_m(\theta_\gamma)\bigg)\left(\prod_{j=1}^{n-1}h_j(i\tfrac{2|m_j|-1}{2})\right)
\end{eqnarray*}
\end{prop}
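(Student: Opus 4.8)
The plan is to extract the regular-class part of \eqref{e:altFm} by first using the stated asymptotic
$\int_{\calF_\Gamma}F_{m-\sigma\nu,A}(g,g)dg=\int_{\calF_{A}}F_{m-\sigma\nu}(g,g)dg+o(1)$, then expanding
$F_{m-\sigma\nu}(g,g)=\sum_{\gamma\in\Gamma}f_{m-\sigma\nu}(g,\gamma g)$ and collecting $\Gamma$-conjugacy classes. The parabolic and hyperbolic-parabolic classes are deferred to the later sections; for the remaining regular classes the orbital integrals converge as $A\to\infty$, contributing $\vol(\Gamma_\gamma\bs G_\gamma)\,I_{m-\sigma\nu}(\gamma;h)$ with the product structure $I_{m-\sigma\nu}(\gamma;h)=\prod_j I_{(m-\sigma\nu)_j}(\gamma_j;h_j)$ supplied by Lemma \ref{l:orbital}.

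The first real step is to exploit this product structure in the alternating sum. Since $J_m=\{1,\dots,n-1\}$, every admissible $\nu$ has $\nu_n=0$, so the sum over $\nu$ separates variable by variable and collapses the last coordinate to $m_n=0$:
\[
\sum_{J_\nu\subseteq J_m}(-1)^{|J_\nu|}I_{m-\sigma\nu}(\gamma;h)=I_0(\gamma_n;h_0)\prod_{j=1}^{n-1}\bigl(I_{m_j}(\gamma_j;h_j)-I_{m_j-\sigma_j}(\gamma_j;h_j)\bigr),
\]
with $\sigma_j=\sgn(m_j)$. This reduces the entire computation to the three one-variable building blocks of Lemma \ref{l:orbital}.

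The next step is the cancellation, which is the source of most of the simplification. If any factor $\gamma_j$ with $j<n$ is hyperbolic, then $I_{m_j}(\gamma_j;h_j)$ is independent of $m_j$ by the final clause of Lemma \ref{l:orbital}, so the corresponding difference is $0$ and the whole product vanishes (this is the mechanism isolated in Lemma \ref{l:cancelation}). Hence among the regular classes only those whose first $n-1$ factors are all elliptic survive, namely the trivial class, the elliptic-hyperbolic classes ($\gamma_n$ hyperbolic), and the elliptic classes ($\gamma_n$ elliptic); all other mixed classes and all regular hyperbolic classes drop out. It then remains to evaluate the three survivors. For $\gamma=1$ each difference is $\tfrac{2|m_j|-1}{4\pi}h_j(i\tfrac{2|m_j|-1}{2})$ and $I_0(1;h_0)=\tfrac{1}{4\pi}\int_\bbR h_0(r)r\tanh(\pi r)dr$, which after multiplying by $\vol(\Gamma\bs G)$ assembles into the volume term with $|m|^*=\prod_j(2|m_j|-1)$. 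For $\gamma$ elliptic-hyperbolic each difference equals $\tfrac{e^{im_j\theta_{\gamma,j}}}{1-e^{i\sigma_j\theta_{\gamma,j}}}h_j(i\tfrac{2|m_j|-1}{2})$, whose product over $j<n$ is $H_m(\theta_\gamma)\prod_j h_j(\cdots)$; combining with $I_0(a_{\ell_\gamma};h_0)$ and $\vol(\Gamma_\gamma\bs G_\gamma)=\ell_{\gamma_0}$ gives the $\Gamma_{\mathrm{eh}}$ sum. The elliptic case is identical except that $I_0(\gamma_n;h_0)=\tfrac{\tilde{h}_0(\theta_{\gamma,n},0)}{\sin(\theta_{\gamma,n}/2)}$ and $\vol(\Gamma_\gamma\bs G_\gamma)=1/M_\gamma$. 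Adding the three contributions yields the stated formula.

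The main obstacle I anticipate is not conceptual but the bookkeeping of normalizations: one must reconcile the measure implicit in the orbital integrals of Lemma \ref{l:orbital} with the covolumes $\vol(\Gamma_\gamma\bs G_\gamma)$ so that the constants (in particular the $2\sinh(\ell_\gamma/2)$ denominator and the $M_\gamma$) come out correctly, and one must check that the $o(1)$ incurred in replacing $F_{m-\sigma\nu,A}$ by $F_{m-\sigma\nu}$ on $\calF_A$ is confined to the non-regular classes and so does not perturb the regular-class limit computed above.
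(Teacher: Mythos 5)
Your proposal follows the paper's own argument essentially step for step: swap the truncated kernel for the full kernel on the truncated domain $\calF_A$, unfold into conjugacy classes, factor the orbital integral and the alternating sum coordinate-by-coordinate, kill every class with a hyperbolic factor in a coordinate $j<n$ via the $m$-independence of the hyperbolic orbital integral, and evaluate the three survivors with Lemma \ref{l:orbital} and the covolumes $\ell_{\gamma_0}$ and $1/M_\gamma$. The explicit product formula you write for the alternating sum is just a more transparent form of the paper's appeal to the cancellation mechanism of Lemma \ref{l:cancelation}, and the normalization issue you flag is treated no more carefully in the paper itself, so there is nothing to add.
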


\subsection{Parabolic contribution}
We can collect together the parabolic elements corresponding to each cusp and compute their contribution separately. To simplify notation we will assume for now that we are dealing with the cusp at infinity and omit the superscripts enumerating the cusps.

We recall \cite[Chapter III, Lemma 2.1]{Efrat87} stating that we can take the elements
$n_\alpha=\begin{pmatrix} 1 & \alpha \\ 0& 1\end{pmatrix}$ with $\alpha\in \calO/U^2$ as a full set of representatives for the parabolic conjugacy classes corresponding to the cusp at $\infty$. The contribution of these parabolic elements to $\Tr(F_{m,A})$ is then given by
\begin{eqnarray*}
\sum_{\alpha\in \calO/U^2}\int_{\calF_{\infty,A}} f_m(g,n_\alpha g)dg
\end{eqnarray*}
where
$$\calF_{\infty,A}=\{n_xa_tk_\theta|x\in \calF_\calO,\; Y_0(g)<A\}.$$
Indeed, the compact support of $f_{m_j}(g,1)$ implies that $f_m(g,n_\alpha g)=0$ when $g$ is close to any of the other cusps. We can thus use the domain $\calF_{\infty,A}$ ignoring the cutoff at all the other cusps.
Following the same argument as in \cite[pages 88-89]{Efrat87} we get
\begin{prop}\label{p:parabolic}
For any $m\in \bbZ^n$, as $A\to\infty$, the contribution to $\Tr(F_{m,A})$ of the parabolic classes corresponding to the cusp at infinity is given by
\begin{eqnarray*}
\lefteqn{\sum_{\alpha\in \calO/U^2}\int_{\calF_{\infty,A}} f_m(g,n_\alpha g)dg=}
\\&&(2^{n-1}R\log(A)+ c_\calO v)\hat{h}(0)+2^{n}R\hat{h}(0)\sum_{j=1}^n\frac{I_{m_j}^p(h_j)}{\hat{h}_j(0)}+o(1),
\end{eqnarray*}
where $I_{m_j}^p(h_j)$ is given in \eqref{e:Ip} and $c_\calO$ is the Euler constant of $\calO$.
\end{prop}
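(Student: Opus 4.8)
The plan is to reproduce Efrat's evaluation of the $m=0$ parabolic contribution \cite[pages 88--89]{Efrat87} while carrying the weight-$m$ phase factors, and to show that these enter only the finite part of the answer, through the integrals $I_{m_j}^p(h_j)$. I would first insert the explicit point pair invariant \eqref{e:pointpair}. Writing $g=n_xa_tk_\theta$ one has $n_\alpha g=n_{x+\alpha}a_tk_\theta$, so the $K$-parts cancel and, with $y_j=e^{t_j}$,
\[ f_m(g,n_\alpha g)=\prod_{j=1}^n(-1)^{m_j}\rho_j\!\left(\frac{\alpha_j^2}{y_j^2}\right)\left[\frac{\alpha_j+2iy_j}{\alpha_j-2iy_j}\right]^{m_j}, \]
which is independent of $x$ and $\theta$. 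Integrating these out over $\calF_{\infty,A}$ contributes the factor $v=\vol(\bbR^n/\calO)$ and a normalising constant, and reduces the statement to the asymptotics, as $A\to\infty$, of the sum over $\alpha\in\calO/U^2$ (the class $\alpha=0$ being the identity, treated separately) of the truncated $t$-integral; after the substitution $x_j=|\alpha_j|e^{-t_j}$ the cutoff $Y_0<A$ becomes the single constraint $\prod_j x_j>N(\alpha)/A$, where $N(\alpha)=\prod_j|\alpha_j|$.

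The essential point is that the divergent and the constant terms are \emph{unchanged} by the weight. Indeed, the ``total mass'' of each one-dimensional orbital integral is $m$-independent: setting $w=0$ in \eqref{e:Q} gives $\int_\bbR f_{m_j}(1,n_x)\,dx=\hat h_j(0)$ for every $m_j$, the $m$-dependence of $\rho_j$ and of the phase cancelling. Hence the logarithmic blow-up, whose coefficient is governed by the regulator $R$ of $\calO$ under the rank-$(n-1)$ action of $U$, and the Kronecker-limit (Epstein zeta) constant $c_\calO$ are exactly those of the $m=0$ computation, and Efrat's analysis carries over verbatim to yield the $m$-independent term $(2^{n-1}R\log(A)+c_\calO v)\hat h(0)$, with $\hat h(0)=\prod_k\hat h_k(0)$.

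The weight enters only through the finite remainder, because the truncation weights each orbital integral by the logarithm of the size of $\alpha$. The relevant first moment is $\int_0^\infty f_{m_j}(1,n_x)\log(x)\,dx=I_{m_j}^p(h_j)$, which, unlike the total mass, does depend on $m_j$. Tracking this logarithmic weight through the counting of $\alpha\in\calO/U^2$ with $N(\alpha)\le A$, and letting one coordinate $j$ carry the moment while the remaining $n-1$ coordinates again contribute $\int_\bbR f_{m_k}(1,n_x)\,dx=\hat h_k(0)$, the $m$-dependent part of the finite term becomes $\sum_j I_{m_j}^p(h_j)\prod_{k\neq j}\hat h_k(0)$, times a geometric constant. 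Collecting the two sign patterns of each $\alpha_j$ (which double $I_{m_j}^p$) together with the regulator yields the constant $2^nR$, so the correction is $2^nR\,\hat h(0)\sum_{j=1}^n I_{m_j}^p(h_j)/\hat h_j(0)$.

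The main obstacle is the bookkeeping in this last step. Since the lattice sum over $\alpha$ does not factor through the coordinates, isolating the finite part requires the Kronecker-limit analysis of the counting function of $\calO$ modulo $U$, and I would have to check, with the phase present, both that (i) the $\log(A)$ and constant terms are untouched and (ii) the $m$-dependent correction truly localises to one coordinate at a time and reconstructs $I_{m_j}^p(h_j)$ with the exact constant $2^nR$, uniformly enough that all remaining errors are $o(1)$. The compact support of $\hat h$ (hence of the $\rho_j$, bounding the $x_j$ from above) is what keeps the $t$-integral convergent at the far end of the cusp and makes these manipulations legitimate.
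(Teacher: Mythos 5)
Your proposal follows essentially the same route as the paper: unfold the $x$- and $\theta$-integrals to produce the factor $v$, substitute $x_j=|\alpha_j|e^{-t_j}$ so the cutoff becomes $N(x)\geq N(\alpha)/A$, invoke the asymptotic $\sum_{N(\alpha)\leq T}N(\alpha)^{-1}\sim \frac{2^{2n-1}R}{v}\log T+2^n c_\calO$ (Efrat's Proposition 2.2, your ``Kronecker-limit analysis''), and split $\log N(x)=\sum_j\log x_j$ so that the $m$-dependence localises coordinate-by-coordinate into $I^p_{m_j}(h_j)$ via the $m$-independent total mass $\int_\bbR f_{m_j}(1,n_x)\,dx=\hat h_j(0)$. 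The bookkeeping you flag as the remaining obstacle is exactly what the paper carries out, with the same constants emerging.
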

\begin{proof} Using the coordinates $g=n_xa_tk$ and the fact that
$$f_m(n_xa_tk,n_\alpha n_xa_tk)=f_m(1,a_{-t}n_\alpha a_t),$$
we get
\begin{eqnarray*}
\lefteqn{\sum_{\alpha\in \calO/U^2}\int_{\calF_{\infty,A}} f_m(g,n_\alpha g)dg=}\\
&&v\sum_{\alpha\in \calO/U^2}\int_{Y_0(a_t)\leq A} f_m(1, n_{\alpha e^{-t}})e^{-t}dt\\
&&=v\sum_{\alpha\in \calO/U^2}\frac{1}{N(\alpha)}\int_{N(x)\geq \frac{N(\alpha)}{A}} f_m(1, n_{x})dx
\end{eqnarray*}
where $N(\alpha)=|\alpha_1\cdots \alpha_n|,\; N(x)=|x_1\ldots x_n|$ and the integral is over all $x\in R_+^n$ with $N(x)\leq N(\alpha)$.
Exchanging the order of summation and integration we get
\begin{eqnarray*}
\lefteqn{\sum_{\alpha\in \calO/U^2}\int_{\calF_{\infty,A}} f_m(g,n_\alpha g)dg=}\\&&
v\int_{\bbR_+^n} f_m(1, n_{x})\bigg(\mathop{\sum_{\alpha\in \calO/U^2}}_{N(\alpha)\leq A N(x)}\frac{1}{N(\alpha)}\bigg)dx
\end{eqnarray*}
Using \cite[Proposition 2.2]{Efrat87} stating that as $A\to\infty$
\[\mathop{\sum_{\alpha\in \calO/U^2}}_{N(\alpha)\leq AN(x)}\frac{1}{N(\alpha)}=\frac{2^{2n-1}R}{v}\log(AN(x))+2^nc_\calO+o(1),\]
together with the identity
\[2^n\int_{\bbR_+^n}f_m(1,n_x)dx=\int_{\bbR^n}f_m(1,n_x)dx=\hat{h}(0),\]
(which follows directly from \eqref{e:ST})
we get that
\begin{eqnarray*}
\lefteqn{\sum_{\alpha\in \calO/U^2}\int_{\calF_{\infty,A}} f_m(g,n_\alpha g)dg=}\\&&
(2^{n-1}R\log(A)+ c_\calO v)\hat{h}(0)
+2^{2n-1}R\int_{\bbR^n_+} f_m(1, n_{x})\log(N(x))dx.
\end{eqnarray*}
Finally, we can replace
\begin{eqnarray*}
2^{n-1}\int_{\bbR^n_+} f_m(1, n_{x})\log(N(x))dx=2^{n-1}\sum_{j=1}^n \int_{\bbR^n_+} f_m(1, n_{x})\log(x_j)dx\\
=\sum_{j=1}^nI^p_{m_j}(h_j)\prod_{i\neq j}\hat{h}_i(0)
\end{eqnarray*}
concluding the proof.
\end{proof}
When taking the alternating sum over all neighboring weights, the first term (involving $\log(A)$) does not depend on $m$ and hence cancels out. For the second term, let
\[\calC_p(m)=2^{n}R\hat{h}(0)\sum_{j=1}^n\frac{I_{m_j}^p(h_j)}{\hat{h}_j(0)},\]
then
\[\calC_p(m)-\calC_p(m-e_j)=2^{n}R\hat{h}(0)\frac{I^p_{m_j}(h_j)-I^p_{m_j-1}(h_j)}{\hat{h}_j(0)},\]
depends only on $m_j$. Hence, from Lemma \ref{l:cancelation} for $m=(m_1,\ldots,m_{n-1},0)$ with $n>2$ we have that
$\sum_{J_\nu\subseteq J_m}(-1)^{|J_\nu|}\calC(m-\sigma\nu)=0$,
while for $n=2$ we have
\begin{eqnarray*}
\lefteqn{\calC_p(m,0)-\calC_p(m-\sigma,0)=}\\
&&=4R\hat{h}_0(0)(I_{m}^p(h_1)-I^p_{m-\sigma}(h_1))\\
&&=R\hat{h}_0(0)\frac{2|m|-1}{2\pi}\int_\bbR \frac{h_1(t)}{(|m|-\tfrac{1}{2})^2+t^2}dt-R\hat{h}_0(0)h_1(i\tfrac{2|m|-1}{2})
\end{eqnarray*}

Repeating the same argument for each cusp and collecting the corresponding terms we can conclude
\begin{prop}\label{p:paraboloic}
When $n>2$ the parabolic classes do not contribute to \eqref{e:altFm} and for $n=2$ their contribution is given by
\[\hat{h}_0(0)\left(\sum_{i=1}^\kappa R^{(i)}\right)\left(\frac{2|m|-1}{2\pi}\int_\bbR \frac{h_1(t)}{(|m|-\tfrac{1}{2})^2+t^2}dt-h_1(i\tfrac{2|m|-1}{2})\right).\]
\end{prop}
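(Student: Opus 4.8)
The plan is to reduce everything to the single-cusp computation already recorded in Proposition~\ref{p:parabolic} and then assemble the contributions of all $\kappa$ cusps inside the alternating sum \eqref{e:altFm}. First I would note that the parabolic classes attached to distinct cusps can be handled independently: since each point-pair invariant $f_{m_j}(\cdot,1)$ is compactly supported, $f_m(g,n_\alpha g)$ vanishes once $g$ is deep in any cusp other than the one fixed by $n_\alpha$, so on a truncated fundamental domain the cusps do not interfere. Conjugating the $i$-th cusp $\xi_i$ to $\infty$ by $\tau_i$ then lets me apply Proposition~\ref{p:parabolic} verbatim to $\tau_i\Gamma\tau_i^{-1}$, now with the cusp-specific data $R^{(i)},v^{(i)},c_{\calO^{(i)}}$ replacing $R,v,c_\calO$. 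This gives the contribution of the parabolic classes at $\xi_i$ to $\Tr(F_{m,A})$ as
\[(2^{n-1}R^{(i)}\log(A) + c_{\calO^{(i)}} v^{(i)})\hat{h}(0) + 2^n R^{(i)}\hat{h}(0)\sum_{j=1}^n \frac{I_{m_j}^p(h_j)}{\hat{h}_j(0)} + o(1).\]

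Next I would take the alternating sum over neighboring weights prescribed by \eqref{e:altFm}. The two leading terms are independent of the weight, and since $J_m=\{1,\dots,n-1\}\neq\emptyset$ the operator $\sum_{J_\nu\subseteq J_m}(-1)^{|J_\nu|}$ annihilates any weight-independent function (as $\sum_{S\subseteq J_m}(-1)^{|S|}=(1-1)^{|J_m|}=0$); hence both the $\log(A)$ divergence and the constant $c_{\calO^{(i)}}v^{(i)}$ drop out cusp by cusp. Writing $\calC_p^{(i)}(m)=2^n R^{(i)}\hat{h}(0)\sum_j I_{m_j}^p(h_j)/\hat{h}_j(0)$ for the surviving term, the crucial structural input is Lemma~\ref{l:intp}, which shows that $I_{m_j}^p(h_j)-I_{m_j-1}^p(h_j)$ is a function of $m_j$ alone; consequently $\calC_p^{(i)}(m)-\calC_p^{(i)}(m-e_j)$ depends only on $m_j$, exactly the hypothesis of the cancellation Lemma~\ref{l:cancelation}.

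The case split is then forced by $|J_m|$. For $n>2$ we have $|J_m|=n-1\geq 2$, so Lemma~\ref{l:cancelation} applies (with, say, $j=1$) and the alternating sum of $\calC_p^{(i)}$ vanishes identically; summing over $i$ shows the parabolic classes make no net contribution. For $n=2$ we have $J_m=\{1\}$, the cancellation lemma gives nothing, and I would instead evaluate directly: with $\sigma=\sgn(m)$ the alternating sum collapses to $\calC_p^{(i)}(m)-\calC_p^{(i)}(m-\sigma)=4R^{(i)}\hat{h}_0(0)(I_m^p(h_1)-I_{m-\sigma}^p(h_1))$, where I used $\hat{h}(0)/\hat{h}_1(0)=\hat{h}_0(0)$. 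Substituting Lemma~\ref{l:intp} and accounting for both signs of $m$ yields
\[R^{(i)}\hat{h}_0(0)\left(\frac{2|m|-1}{2\pi}\int_\bbR \frac{h_1(t)}{(|m|-\tfrac{1}{2})^2 + t^2}\,dt - h_1(i\tfrac{2|m|-1}{2})\right),\]
and summing over the $\kappa$ cusps produces the factor $\sum_{i=1}^\kappa R^{(i)}$ and the stated formula.

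The genuine content here lives in the already-established ingredients, namely the asymptotic extraction of the $\log(A)$ term in Proposition~\ref{p:parabolic} and the weight-dependence identity of Lemma~\ref{l:intp}; so the main thing to get right at this step is bookkeeping. The point demanding the most care is verifying that the weight-independent pieces genuinely cancel \emph{at each cusp before} the summation over $i$, while keeping the cusp constants $R^{(i)},v^{(i)},c_{\calO^{(i)}}$ straight through the conjugation by $\tau_i$. The one non-formal computation is the $n=2$ reduction, where, unlike $n>2$, nothing is killed by Lemma~\ref{l:cancelation} and the surviving term must be evaluated explicitly; this is precisely where the sharp support assumption on $\hat{h}_1$ and the identity of Lemma~\ref{l:intp} combine to deliver the closed form.
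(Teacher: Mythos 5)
Your proposal is correct and follows essentially the same route as the paper: apply Proposition \ref{p:parabolic} cusp by cusp, kill the weight-independent $\log(A)$ and constant terms via the alternating sum, invoke Lemma \ref{l:cancelation} for $n>2$ (noting that $\calC_p(m)-\calC_p(m-e_j)$ depends only on $m_j$ because the sum over $j$ telescopes), and evaluate the surviving $n=2$ difference explicitly with Lemma \ref{l:intp}. The only quibble is that Lemma \ref{l:intp} is not needed to see that the difference depends on $m_j$ alone (that is automatic from the form of $\calC_p$); it is needed only for the closed-form evaluation in the $n=2$ case.
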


\subsection{Hyperbolic-parabolic contribution}
Here again, we compute the contribution of the hyperbolic-parabolic classes corresponding to each cusp separately; to simplify notation we assume we are dealing with the cusp at infinity and drop the superscripts.

We recall the discussion in \cite[Chapter II Section 3]{Efrat87}, showing that any hyperbolic element fixing the cusp at $\infty$ is conjugated in $\Gamma$ to
some element of the form $\gamma_{q,\alpha}=\begin{pmatrix} u_q& \alpha\\ 0& u_{-q} \end{pmatrix}$, where as before $u_q=\epsilon_1^{q_1}\cdots \epsilon_{n-1}^{q_{n-1}}$ with $\epsilon_1,\ldots,\epsilon_{n-1}$ generators for $U$. In particular, to get a full set of representatives we can take all $q\in \bbZ^{n-1}/\{\pm 1\}$ and for any such $q$ there are finitely many values of $\alpha$ (see \cite[Chapter II Proposition 3.3]{Efrat87}).
In what follows we will calculate the contribution to \eqref{e:altFm} coming from the conjugacy class of $\gamma_{q,\alpha}$
(and show that it goes to $0$ as $A\to\infty$).
\begin{rem}
In general, when doing this there could be some over counting due to hyperbolic elements fixing two different cusps. However, since we will show that there is no contribution at all from these elements we do not have to worry about this issue.
\end{rem}

The other fixed point of $\gamma_{q,\alpha}$ (other than $\infty$) is given by $\frac{-\alpha}{u_q-u_{q}^{-1}}$.
Let $\tau\in G$ be an element sending $\frac{-\alpha}{u_q-u_{q}^{-1}}$ to $\infty$. We note that it is of the form
$\tau=\left(\begin{smallmatrix}* & *\\ \frac{u_q-u^{-1}_{q}}{\Lambda}& \frac{\alpha}{\Lambda}\end{smallmatrix}\right)$ and that
$N(\Lambda)=\prod_{j=1}^n |\Lambda_j|$ is independent of the choice of $\tau$ (see \cite[Page 92]{Efrat87}).
We also recall \cite[Chapter II, Proposition 3.1]{Efrat87} stating that the centralizer $\Gamma_{\gamma_{q,\alpha}}$ is a free commutative group of rank $n-1$,
and that the set $$L_{q,\alpha}=\{q'|\gamma_{q',\alpha'}\in  \Gamma_{\gamma_{q,\alpha}}\},$$
is a lattice in $\bbR^{n-1}$ (in fact, the map sending $\gamma_{q',\alpha'}\mapsto q$ gives an isomorphism of $\Gamma_{\gamma_{q,\alpha}}$ and $L_{q,\alpha}$).

Let $\calF_{\gamma_{q,\alpha}}$ denote a fundamental domain for $\Gamma_{\gamma_{q,\alpha}}\bs G$. Using the coordinates at the cusp  $Y_0(g),Y_1(g),\ldots,Y_{n-1}(g)$ defined as in \cite[page 51]{Efrat87} for $g=n_xa_tk_\theta$ by the equation
\[\begin{pmatrix} t_1\\ t_2\\\vdots\\
t_n\end{pmatrix}=D\begin{pmatrix} \log(Y_0)\\ 2Y_1\\\vdots\\
2Y_{n-1}\end{pmatrix},\]
this fundamental domain is given by
\[\calF_{\gamma_{q,\alpha}}=\{g=n_xa_tk|x\in\bbR^n,\; Y_0\in(0,\infty),\;(Y_1,\ldots,Y_n)\in \bbR^n/L_{q,\alpha}\}.\]
The contribution of the conjugacy class $\{\gamma_{q,\alpha}\}$ to the trace of $F_{m,A}$ is then given by (see \cite[Page 94]{Efrat87})
\[\int_{S_A}f_m(g,\gamma_{q,\alpha}g)dg,\]
where
\begin{eqnarray*}
S_A=\{g\in \calF_{\gamma_{q,\alpha}}|Y_0(g)<A,\; Y_0(\tau g)<A\}.\\
\end{eqnarray*}
Recalling the relation $\tilde{f}(z,w)=f(p_z,p_w)$ we can write this integral explicitly
\begin{eqnarray*}
\lefteqn{\int_{S_A}f_m(g,\gamma_{q,\alpha}g)dg=}\\
&=&\int_{S_A}\tilde{f}_m(z,u_q^2z+\alpha u_q)dz\\
&=&\int_{\tilde{S}_A}\tilde{f}_m(i,u_q^2 i+x(u_q^2-1)+u_q)dY_1\cdots dY_{n-1}\frac{dY_0}{Y_0}dx
\end{eqnarray*}
where we made the change of variables $x_j\mapsto \frac{(u_{q,j}^2-1)x_j+u_{q,j}\alpha_j}{(u_{q,j}^2-1)y_j}$ resulting in the new fundamental domain
\[\tilde{S}_A=\{x\in \bbR^n,\; (Y_1,\ldots,Y_n)\in \bbR^n/L_{q,\alpha}, B_A(x)\leq Y_0\leq A\},\]
with $B_A(x)=\frac{N(\Lambda)^2}{AN(u_q-u_q^{-1})^2}\prod_j(x_j^2+1)^{-1}$.
We can first integrate over $Y_1,\ldots, Y_{n-1}$ to get $|L_{q,\alpha}|=\vol(\bbR^n/L_{q,\alpha})$ and then preform the integral over $Y_0$ to get
$$\log(A)-\log(B_A(x))=2\log(A)+2\log(N(\frac{u_q-u_q^{-1}}{\Lambda}))+\sum_{j=1}^n \log(x_j^2+1).$$
We are thus left with the integral over $x\in \bbR^n$ giving
\begin{eqnarray*}
\lefteqn{\int_{S_A}f_m(g,\gamma_{q,\alpha}g)dg=}\\
&=&2|L_{q,\alpha}|\big(\log(A)+\log(N(\frac{u_q-u_q^{-1}}{\Lambda}))\big)\int_{\bbR^n}f_m(n_x,a_{2\log(|u_q|)}n_x)dx\\
&&+|L_{q,\alpha}|\sum_{j=1}^n\int_{\bbR^n}f_m(n_x,a_{2\log(|u_q|)}n_x)\log(1+x_j^2)dx\\
&=&2|L_{q,\alpha}|\big((\log(A)+\log(N(\frac{u_q-u_q^{-1}}{\Lambda}))\big)\prod_j\frac{\hat{h}_j(2\log(|u_{q,j}|))}{2\sinh(\log(|u_{q,j}|))}\\
&&+|L_{q,\alpha}|\sum_{j=1}^n\prod_{i\neq j}\frac{\hat{h}_i(2\log(|u_{q,j}|))}{2\sinh(\log(|u_{q,i}|))} I_{m_j}^{he}(2\log(|u_{q,j}|);h_j).
\end{eqnarray*}
Note that, for fixed $A$, the sum over all representatives $$\sum_{q,\alpha}|\int_{S_A} f_m(g,\gamma_{q,\alpha}g)dg|<\infty,$$
absolutely converges. We can thus take the alternating sum over the neighboring weights for each of the terms individually.
The only dependence on the weight $m$ is in the orbital integral $I_{m_j}^{he}(2\log(|u_{q,j}|);h_j)$ (which depends only on $m_j$).
Hence, Lemma \ref{l:cancelation} implies that this vanishes when taking the alternating sum whenever $n>2$. For $n=2$ we recall that $U$ is generated by one element $\epsilon$, that $R=|\log|\epsilon||$ and that $u_q=\epsilon^q$ for some $q\in \bbN$. We thus get that $I_{m}^{he}(2\log(|u_{q,2}|);h_1)=I_{m}^{he}(2qR;h_1)$ which vanishes by Lemma \ref{l:Ihp} and our assumption that $h_1$ is supported on $(-R,R)$.

\begin{rem}
We used the assumption that $\hat{h}_1$ is supported on $(-R,R)$ in two places, once (via Lemma \ref{l:integralcomp1}) when computing the contribution of the continuous spectrum, and again (via Lemma \ref{l:Ihp}) when showing that there is no contribution from the hyperbolic-parabolic elements. Without this assumption we would have gotten a contribution from the hyperbolic-parabolic elements that would have had to cancel out with additional terms added to the continuous contribution.
\end{rem}

\subsection{Collecting all the terms}
We can now collect all the terms contributing to the alternating sums in \eqref{e:altFm} and \eqref{e:altHm}.
For $n>2$ the only non-vanishing contribution comes from the regular classes, so, after taking the limit $A\to\infty$ we get
\begin{eqnarray*}\lefteqn{\mathop{\sum_{\nu\in\{0,1\}^n}}_{J_\nu\subseteq J_m} (-1)^{|J_\nu|}\int_{\calF_\Gamma}\big(F_{m-\sigma\nu}(g,g)-H_{m-\sigma\nu}(g,g)\big)dg}\\
&=&\bigg(\frac{|m|^*\vol(\Gamma\bs G)}{(4\pi)^n}\int_\bbR h_0(r)r\tanh(\pi r)dr +\sum_{\{\gamma\}\in \Gamma_{\mathrm{eh}}}\frac{\ell_{\gamma_0}\hat{h}_0(\ell_\gamma)}{2\sinh(\ell_\gamma/2)}H_m(\theta_\gamma)\\
&&+\sum_{\{\gamma\}\in\Gamma_{\mathrm{e}}}\frac{\tilde{h}_0(\theta_{\gamma,n},0)}{M_\gamma\sin(\theta_{\gamma,n}/2)}H_m(\theta_\gamma)\bigg)\left(\prod_{j=1}^{n-1}h_j(i\tfrac{2|m_j|-1}{2})\right)
\end{eqnarray*}
Equating this to \eqref{e:altTr} and dividing by $\left(\prod_{j=1}^{n-1}h_j(i\tfrac{2|m_j|-1}{2})\right)$ we get the trace formula.

When $n=2$ we also have non trivial contributions from the continuous spectrum and parabolic elements. Note that the terms involving the integral $\int_{\bbR}\frac{ h_1(t)}{(|m|-\tfrac{1}{2})^2+t^2}dt$ coming from the continuous spectrum and the parabolic elements cancel each other and we get
\begin{eqnarray*}\lefteqn{\int_{\calF_\Gamma}\big((F_{(m,0)}(g,g)-H_{(m,0)}(g,g))-(F_{(m-\sigma,0)}(g,g)-H_{(m-\sigma,0)}(g,g))\big)dg}\\
&=&\bigg(\frac{(2|m|-1)\vol(\Gamma\bs G)}{(4\pi)^2}\int_\bbR h_0(r)r\tanh(\pi r)dr\\
&& +\sum_{\{\gamma\}\in
\Gamma_{\mathrm{eh}}}\frac{\ell_{\gamma_0}\hat{h}_0(\ell_\gamma)}{2\sinh(\ell_\gamma/2)}\frac{e^{im\theta_\gamma}}{1-e^{i\sigma\theta_\gamma}}+\sum_{\{\gamma\}\in\Gamma_{\mathrm{e}}}\frac{\tilde{h}_0(\theta_{\gamma,2},0)}{M_\gamma\sin(\theta_{\gamma,2}/2)}\frac{e^{im\theta_{\gamma,1}}}{1-e^{i\sigma\theta_{\gamma,1}}}\\
&&-\sum_{i=1}^\kappa R^{(i)}\hat{h}_0(0)-2\sum_{i=1}^\kappa R^{(i)}\sum_{q\in \bbN}\hat{h}_0(2qR^{(i)})e^{-2qR^{(i)}(|m|-\tfrac{1}{2})}\bigg)h_1(i\tfrac{2|m|-1}{2})
\end{eqnarray*}
Equating to \eqref{e:altTr} and dividing by $h_1(i\tfrac{2|m|-1}{2})$ we get the trace formula for $n=2$.

\subsection{Another proof of Lemma \ref{l:intp}}\label{s:intp}
We now show how the ideas from this section, when applied to the modular group $\G=\PSL_2(\bbZ)$, give another cute proof of the identity
\begin{equation*}I_m^p(h)-I_{m-1}^p(h)=\frac{2m-1}{8\pi}\int_\bbR \frac{h(t)}{(m-\tfrac{1}{2})^2+t^2}dt-\frac{1}{4}h(i\tfrac{2m-1}{2}),\end{equation*}
for $m\in \bbN$, where $I_m^p(h)$ is given in \eqref{e:Ip}.

To do this, fix a test function $h\in \mathrm{PW}^w(\bbC)$, let $f_m$ denote a weight $m$ point pair invariant with $\calS_mf_m=h$, and let $F_m$ denote the corresponding integral operator on $L^2(\G\bs\PSL_2(\bbR),m)$. Let $H_m$ be defined as in \eqref{e:Hm} using the Eisenstein series at the single cusp at infinity.
We now compute the difference
\[\Tr(F_m-H_m)-\Tr(F_{m-1}-H_{m-1}),\]
in two different ways.

On one hand, applying the lowering operator $\calL^-$ we see that all eigenfunctions contributing to $\Tr(F_m-H_m)$ with eigenvalues $\lambda\neq m(1-m)$ will cancel with corresponding eigenfunctions contribution to $\Tr(F_{m-1}-H_{m-1})$; when computing $\Tr(F_0-H_0)$ we also get a contribution from the constant eigenfunction which does not cancel out with any of the eigenfunctions contributing to $\Tr(F_1-H_1)$. We thus get that
\[\Tr(F_m-H_m)-\Tr(F_{m-1}-H_{m-1})=h(i\tfrac{2m-1}{2})(\dim M_m(\Gamma)-\delta_{m,1}),\]
where
\[M_m(\Gamma)=\{\psi\in L^2(\G\bs \bbH,m)|(\lap_m +m(1-m))\psi=0\}.\]

On the other hand, following the same arguments as above, noting that in this case there are no hyperbolic-parabolic classes nor mixed classes 
we get that
\begin{eqnarray*} \lefteqn{\Tr(F_m-H_m)-\Tr(F_{m-1}-H_{m-1})=\frac{1-2m}{4\pi}\int_\bbR \frac{h(t)}{(m-1/2)^2+t^2}dt}\\
&&+\bigg(\vol(\calF_\Gamma)\frac{2m-1}{4\pi}+\sum_{\{\gamma\}\in \Gamma_{\mathrm{e}} }\frac{1}{M_\gamma} \frac{e^{im\theta_\gamma}}{1-e^{i\theta_\gamma}}\bigg) h(i\tfrac{2m-1}{2})\\
&&+ 2(I_m^p(h)-I_{m-1}^p(h)),
\end{eqnarray*}
where the first term comes from the continuous spectrum, the second term comes from the trivial class and the elliptic class, and the last term comes from the parabolic elements.

Equating the two we get the identity
\begin{eqnarray*}\nonumber \lefteqn{I_m^p(h)-I_{m-1}^p(h)=\frac{2m-1}{8\pi}\int_\bbR \frac{h(t)}{(m-1/2)^2+t^2}dt+\frac{h(i\tfrac{2m-1}{2})}{2}\times}\\
&&\bigg(\dim M_m(\Gamma)-\delta_{m,1}- \vol(\calF_\Gamma)\frac{2m-1}{4\pi}-\sum_{\{\gamma\}\in \Gamma_{\mathrm{e}} }\frac{1}{M_\gamma} \frac{e^{im\theta_\gamma}}{1-e^{i\theta_\gamma}}\bigg)\\
\end{eqnarray*}

Now, for the modular group we have that $\vol(\calF_\Gamma)=\frac{\pi}{3}$ and there are $3$ elliptic conjugacy classes (one of order $2$ and two of order $3$). Hence
\[\vol(\calF_\Gamma)\frac{2m-1}{4\pi} +\sum_{\{\gamma\}\in \Gamma_{\mathrm{e}} }\frac{1}{M_\gamma} \frac{e^{im\theta_\gamma}}{1-e^{i\theta_\gamma}}=\left\lbrace
\begin{array}{cc}
[\frac{m}{6}]-\frac{1}{2} & m\equiv 1\pmod{6} \\
{}[\frac{m}{6}]+\frac{1}{2} &  m\not\equiv 1\pmod{6}
\end{array}\right..\]
Moreover, we recall that $M_m(\Gamma)$ is isomorphic to the space of cusp forms of weight $2m$ which, for the modular group, is of dimension
\[\dim(M_m(\Gamma))=\left\lbrace\begin{array}{cc}
0 & m=1\\
{}[\frac{m}{6}]-1 & m>1,m\equiv 1\pmod{6}\\
{}[\frac{m}{6}] & m>1,m\not \equiv 1\pmod{6}\\
\end{array}\right.\]
Combining the two together we get that
\[\dim(M_{m}(\Gamma))-\delta_{m,1}-\vol(\calF_\Gamma)\frac{2m-1}{4\pi} -\sum_{\{\gamma\} }\frac{1}{M_\gamma} \frac{e^{2im\theta_\gamma}}{1-e^{2i\theta_\gamma}}=-\frac{1}{2}.\]
and plugging this back in gives
\begin{eqnarray*}
I_m^p(h)-I_{m-1}^p(h)=\frac{2m-1}{8\pi}\int_\bbR \frac{h(t)}{(m-1/2)^2+t^2}dt-\frac{1}{4}h(i\tfrac{2m-1}{2}),
\end{eqnarray*}
concluding the proof.

\section{Counting and Equidistribution}\label{s:equi}
The results on counting elliptic-hyperbolic classes and on the distribution of their elliptic parts follow from Theorem \ref{t:htrace} exactly as in \cite{Kelmer10Holonomy}. Specifically, we note that the family of functions
$$\{H_{\sigma m}(\theta)|m\in \bbN^{n-1},\;\sigma\in \{\pm1\}^{n-1}\},$$
form an orthogonal basis of $L^2((\bbR/2\pi\bbZ)^{n-1},\mu^{n-1})$ and we can decompose any function in this space as
\[f(\theta)=\sum_{m,\sigma} a_f(\sigma m) H_{\sigma m}(\theta),\]
with
\[a_f(\sigma m)=\int f(\theta)\overline{H_{\sigma m}(\theta)}d\mu^{n-1}(\theta).\]
Using the same argument as in the proof of \cite[Proposition 3.3]{Kelmer10Holonomy}, and noting that for congruence groups we have the bound $\lambda_0(m)\geq \frac{1}{4}-(\frac{7}{64})^2$ for the spectral gap, we get
\begin{prop}\label{p:equi}
For any smooth $f\in C^\infty((\bbR/2\pi\bbZ)^{n-1})$
\[|{\mathop{\sum_{\{\gamma\}\in \Gamma_{\mathrm{eh}}}}_{l_\gamma\leq x}}\frac{l_{\gamma_0}f(\theta_\gamma)}{2\sinh(l_\gamma/2)}-2^{n}e^{x/2}\mu(f)|\ll \sqrt{\norm{f}_\infty C(f)}e^{x/4}+C(f) e^{7x/64},\]
where $C(f)=\sum_{\sigma,m}|m|^*|a_f(\sigma m)|$.
\end{prop}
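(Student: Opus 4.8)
The plan is to linearize the weighted sum via the spectral expansion of $f$ in the basis $\{H_{\sigma m}\}$, reduce to a one‑parameter counting problem for each weight $\sigma m$, and apply the hybrid trace formula of Theorem \ref{t:htrace} to each. Writing $f=\sum_{\sigma,m}a_f(\sigma m)H_{\sigma m}$, it suffices to understand, for each fixed $\sigma m$, the weighted counting function
\[N_{\sigma m}(x)=\mathop{\sum_{\{\gamma\}\in\Gamma_{\mathrm{eh}}}}_{\ell_\gamma\leq x}\frac{\ell_{\gamma_0}H_{\sigma m}(\theta_\gamma)}{2\sinh(\ell_\gamma/2)},\]
and then to recombine via $\sum_{\sigma,m}a_f(\sigma m)N_{\sigma m}(x)$, tracking the dependence on $\sigma m$ so that the final bound is expressed through $\|f\|_\infty$ and $C(f)=\sum_{\sigma,m}|m|^*|a_f(\sigma m)|$, which is finite because $f$ is smooth.

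For each weight I would apply Theorem \ref{t:htrace} with an even test function $h_0$ whose Fourier transform $\hat h_0$ is a smoothed indicator of $[-x,x]$ at a scale $\eta$ to be chosen. The elliptic‑hyperbolic sum on the geometric side is then exactly $N_{\sigma m}(x)$ up to the smoothing, so I read it off and evaluate everything else. The main term comes entirely from $-(-1)^n\delta_{m,\sgn(m)}h_0(\tfrac{i}{2})$, which is nonzero only when all $|m_j|=1$; since $h_0(\tfrac{i}{2})=\int_{-x}^{x}e^{-u/2}du\sim 2e^{x/2}$ and $\sum_{\sigma}H_{\sigma\mathbf 1}\equiv(-1)^{n-1}$ is constant (by the identity in the remark following Theorem \ref{t:htrace}), summing the $2^{n-1}$ sign contributions against $a_f(\sigma\mathbf 1)$ collapses to $2^{n}e^{x/2}\mu(f)$. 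The identity term contributes $O(|m|^*)$ once the oscillatory integral against $r\tanh(\pi r)$ is bounded, the finitely many elliptic classes contribute $O(1)$, and for $n=2$ the cusp terms contribute only $O(1)$ because $\hat h_0(2qR^{(i)})$ truncates a convergent geometric series; all are dominated by the stated error after summation against $\sum_{\sigma,m}|a_f(\sigma m)|$.

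The two error terms come from the remaining part $\sum_k h_0(r_{k,\sigma m})$. For the exceptional spectrum ($\lambda_k<\tfrac14$, i.e.\ $r_{k,\sigma m}=it_k$ imaginary), the congruence spectral gap $\lambda_0\geq\tfrac14-(\tfrac{7}{64})^2$ of Blomer--Brumley forces $0<t_k\leq\tfrac{7}{64}$, so $|h_0(it_k)|\lesssim e^{7x/64}$; the Weyl law bounds the number of such eigenvalues in $M_{\sigma m}(\Gamma)$ by $O(|m|^*)$, giving $O(C(f)e^{7x/64})$. For the tempered spectrum ($r_{k,\sigma m}$ real) I would not use a sharp cutoff but balance two competing errors. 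The smoothing replaces the sharp window by one of width $\eta$, introducing an error bounded by $\|f\|_\infty$ times the $w$‑mass $\sum_{|\ell_\gamma-x|\leq\eta}\frac{\ell_{\gamma_0}}{2\sinh(\ell_\gamma/2)}\lesssim e^{x/2}\eta$ (using an a priori upper bound of the right order for the unweighted sum), hence $O(\|f\|_\infty e^{x/2}\eta)$. The tempered spectral sum with the smoothed $h_0$ is controlled by $\sum_{r_{k,\sigma m}\leq 1/\eta}|r_{k,\sigma m}|^{-1}$, which the Weyl law (with $|m|^*$‑proportional main term) bounds by $O(|m|^*/\eta)$, giving $O(C(f)/\eta)$. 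Choosing $\eta\sim\sqrt{C(f)/(\|f\|_\infty e^{x/2})}$ balances the two and produces the error $\sqrt{\|f\|_\infty C(f)}\,e^{x/4}$.

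The main obstacle is uniformity in the weight $m$: the Weyl‑law estimate for the tempered eigenvalue sum and the count of exceptional eigenvalues must carry explicit $|m|^*$‑dependence, so that summation over all $\sigma m$ converges and reproduces precisely the weighted norm $C(f)$ rather than losing powers of $|m|$. This is where the smoothness of $f$ is essential, guaranteeing rapid decay of $a_f(\sigma m)$ and finiteness of $C(f)$. The remaining bookkeeping — that the smoothing and truncation errors are uniform in $\sigma m$ and that the lower‑order geometric terms are genuinely negligible — follows the argument of \cite[Proposition 3.3]{Kelmer10Holonomy}, now with Theorem \ref{t:htrace} in place of the uniform‑lattice trace formula.
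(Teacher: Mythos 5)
Your proposal is correct and follows essentially the same route as the paper, which proves this proposition by invoking the argument of \cite[Proposition 3.3]{Kelmer10Holonomy} with Theorem \ref{t:htrace} in place of the uniform trace formula and the Blomer--Brumley bound $\lambda_0(m)\geq\tfrac14-(\tfrac{7}{64})^2$ for the exceptional spectrum. Your expansion in the $H_{\sigma m}$ basis, the smoothed cutoff with the $\eta$-balancing producing $\sqrt{\norm{f}_\infty C(f)}e^{x/4}$, the $|m|^*$-uniform local Weyl law, and the observation that the $n=2$ cusp terms are $O(1)$ per weight are exactly the ingredients of that argument.
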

\begin{rem}
For $n>2$ the hybrid trace formula in the nonuniform case is identical to the uniform case and the proof goes over with out any changes. When $n=2$ there are  additional terms in the nonuniform case, however, it is easy to see that these terms are negligible for these asymptotic estimates (the new terms are essentially equivalent to the contribution of a finite number of short primitive geodesics).
\end{rem}

Theorem \ref{t:equi} now follows from Proposition \ref{p:equi} by a standard summation by parts argument.
Corollary \ref{c:equi} can be obtained from Proposition \ref{p:equi} by using the Beurling-Selberg functions to approximate the
indicator function of an interval (see the proof of \cite[Corollary 3.1]{Kelmer10Holonomy} for more details).
We conclude this section by recalling the relation with quadratic forms in more detail.

Let $F$ denote a totaly real number field of degree $[F:\bbQ]=n\geq 2$ with ring of integers $\calO_F$. We fix an embedding $\iota: F\hookrightarrow \bbR^n$ and correspondingly identify $\PSL_2(\calO_F)$ as a lattice in $\PSL_2(\bbR)^n$.
Given a binary quadratic form $q(x,y)=ax^2+bxy+cy^2$ with $a,b,c\in \calO_F$, its discriminant is $D=b^2-4ac$, its divisor is the ideal $(a,b,c)$, its splitting field is $F(\sqrt{D})$ and its primitive discriminant is the ideal $d=\frac{(D)}{(a,b,c)^2}$. The splitting field and the primitive discriminant are invariant under the equivalence relation $q\sim t q\circ \gamma$ for $t\in F^*$ and $\gamma\in\Gamma_F$, and we denote by $h(D,d)$ the number of equivalence classes with splitting field $F(\sqrt{D})$ and primitive discriminant $d$.

For any non-square $D\in \calO_F$, and primitive discriminant $d\subset\calO_F$ let $\calO_{D,d}$ denote the quadratic order in $F(\sqrt{D})$ with relative discriminant $d$, that is,
\[\calO_{D,d}=\set{\frac{t+u\sqrt{D}}{2}\in \calO_{F(\sqrt{D})}: d|(u^2D)}.\]
Let $\calO_{D,d}^1$ denote the group of relative norm one elements in $\calO_{D,d}$; explicitly this is all $\epsilon=\frac{t+u\sqrt{D}}{2}\in \calO_{D,d}$ with $t^2-4D=4$. This group is naturally isomorphic to the group of automorphs of a quadratic form $q=ax^2+bxy+cy^2$ with discriminant $D$ and primitive discriminant $d$. Specifically, for  any $\epsilon=\frac{t+u\sqrt{D}}{2}\in \calO_{D,d}^1$, we have that $\gamma=\begin{pmatrix}\tfrac{t-bu}{2} & -cu\\ au & \tfrac{t+bu}{2} \end{pmatrix}$ satisfies $q=q\circ \gamma$, and moreover any automorph of $q$ is of this form (see \cite[Theorem 5.6]{Efrat87}).
The map sending the equivalence class of $q$, to the conjugacy class of the centralizer $\{\Gamma_\gamma\}$, where $\gamma\in \PSL_2(\calO_F)$ is any automorph of $q$, is a correspondence between equivalence classes of forms, and centralizers of mixed elements (see \cite[Propositoin 7.1]{Efrat87}).
Moreover, if $\iota_j(D)>0$ in exactly $k$ places, then $\calO_{D,d}/\{\pm 1\}\cong \bbZ^k$ and it corresponds to the centralizer of an element that is hyperbolic in $k$ places and elliptic in the rest (see \cite[Theorem 5.7]{Efrat87}).

In particular, for $D\in \calO_F$ with $\iota_n(D)>0$ and $\iota_j(D)<0$ for $j<n$ and ideal $d\subseteq\calO_{D,d}$, let $\epsilon_{D,d}\in \calO_{D,d}^1$ denote the unique generator of $\calO_{D,d}^1/\{\pm 1\}$ satisfying that $\iota_n(\epsilon_{D,d})>1$ (the other generators are then $\pm \epsilon_{D,d}^{\pm1}$). Since the centralizer of an elliptic-hyperbolic element is generated by two primitive elements ($\gamma$ and $\gamma^{-1}$), there is a correspondence between equivalence classes of quadratic forms with splitting field $F(\sqrt{D})$ and primitive discriminant $d$, and pairs of primitive elliptic-hyperbolic conjugacy classes $\{\gamma\},\{\gamma^{-1}\}$ with $\rho_\gamma^2=\iota_n(\epsilon_{D,d})$ and $\epsilon_{\gamma_j}=\iota_j(\epsilon_{D,d})^{\pm 1}$ for $j<n$.
Note that the choice of each sign $\iota_j(\epsilon_{D,d})^{\pm 1}$ in this correspondence in ambiguous, however, for a test function $f\in (S^1)^{n-1}$ that is invariant under $\epsilon_j\mapsto \epsilon_j^{-1}$ for all $j$ this ambiguity is irrelevant and Corollary \ref{c:units} follows from Theorem \ref{t:equi} applied to $\Gamma=\PSL_2(\calO_F)$.





\end{document}